\newtheorem{theorem}{Theorem}[section]
\newtheorem{proposition}[theorem]{Proposition}
\newtheorem{lemma}[theorem]{Lemma}
\newtheorem{corollary}[theorem]{Corollary}
\theoremstyle{definition}
\newtheorem{definition}[theorem]{Definition}
\newtheorem{example}{Example}
\newtheorem{remark}{Remark}
\DeclareMathOperator{\Ad}{Ad}
\DeclareMathOperator{\Stab}{Stab}
\DeclareMathOperator{\card}{card}
\DeclareMathOperator{\Span}{Span}
\DeclareMathOperator{\Lie}{Lie}
\newcommand{\R}{\mathbb{R}}
\newcommand{\g}{\mathfrak{g}}
\newcommand{\h}{\mathfrak{h}}
\title{Borelian subgroups of simple Lie groups}
\author{Nicolas de Saxcé
\thanks{The author is supported by ERC AdG grant 267259, and acknowledges partial support from ANR-12-BS01-0011 (CAESAR).}
}
\begin{document}

\maketitle

\begin{abstract}
We prove that in a simple real Lie group, there is no Borel measurable dense subgroup of intermediate Hausdorff dimension.
\end{abstract}

\section{Introduction}

The main purpose of the present paper is to prove the following theorem.

\begin{theorem}\label{nointermediatei}
Let $G$ be a connected simple real Lie group endowed with a Riemannian metric. There is no Borel measurable dense subgroup of $G$ with Hausdorff dimension strictly between $0$ and $\dim G$.
\end{theorem}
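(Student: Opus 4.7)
My plan is to deduce Theorem~\ref{nointermediatei} from a \emph{discretized product theorem} for simple Lie groups (of the type established by the author in earlier work): if $A \subset G$ is a $\delta$-separated set with $|A|\sim \delta^{-\alpha}$, $\alpha \in (0,\dim G)$, and $A$ is not concentrated in a $\delta^{\sigma}$-neighborhood of any proper closed connected subgroup, then $|A \cdot A \cdot A|_{\delta} \geq \delta^{-\alpha-\varepsilon}$ for some $\varepsilon = \varepsilon(\alpha,G)>0$. Because a subgroup satisfies $H \cdot H \cdot H = H$, such expansion is incompatible with the existence of a Borel subgroup of Hausdorff dimension $\alpha$.

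Assume for contradiction that $H \leq G$ is Borel, dense, and satisfies $\dim_H H = \alpha \in (0,\dim G)$. Applying Frostman's lemma to a Borel subset of $H$ with positive $\alpha'$-dimensional Hausdorff measure (for any $\alpha'<\alpha$) produces a compactly supported Borel probability measure $\mu$ on $H$ with $\mu(B(x,r)) \leq C r^{\alpha'}$. Discretizing at a small scale $\delta$ yields a $\delta$-separated set $A \subset \Supp\mu \subset H$ of cardinality at least $\delta^{-\alpha'}$, while the inclusion $A \cdot A \cdot A \subset H$ together with $\dim_H H \leq \alpha$ forces $|A \cdot A \cdot A|_{\delta} \leq \delta^{-\alpha - o(1)}$ along some sequence of scales $\delta \to 0$. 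The contradiction will come by applying the product theorem at one such scale.

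To invoke the product theorem, one must verify the non-concentration hypothesis: that $A$ is not trapped in a $\delta^{\sigma}$-neighborhood of any proper closed connected subgroup $gG'$. Here the topological density of $H$ must be upgraded to a quantitative, scale-dependent statement. I would argue by pigeonhole: if a large portion of $A$ sat in such a neighborhood, density of $H$ would allow one to pick elements $h_1,\dots,h_N \in H$ whose cosets $h_i gG'$ are pairwise separated by at least a fixed distance, and the translates $h_i A \subset H$ would together form a subset of $H$ whose $\delta$-covering number exceeds the Frostman bound corresponding to dimension $\alpha$. The fact that proper connected Lie subalgebras of $\g$ form an algebraic family of bounded complexity inside the Grassmannian of $\g$ is what makes this pigeonhole uniform in the choice of $gG'$.

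The main obstacle is precisely this conversion of topological density into scale-by-scale non-concentration; the product theorem itself is a black box, but its hypothesis is quantitative while $H$ is only given abstractly as a Borel dense subgroup. A further technical point is to handle the countable family of conjugacy classes of proper subalgebras simultaneously, and to ensure that the scale $\delta$ at which the Frostman bound is nearly saturated (through $|A|\gtrsim \delta^{-\alpha'}$ and $|A^3|_\delta \lesssim \delta^{-\alpha-o(1)}$) is the same scale at which non-concentration has been verified. Once these are reconciled, the product theorem delivers $|A\cdot A \cdot A|_\delta \geq \delta^{-\alpha-\varepsilon}$, contradicting the bound from step two and completing the proof.
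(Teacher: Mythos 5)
Your proposal has the right flavor --- discretize, invoke a product-type theorem, use density to escape concentration near proper subgroups --- but there is a serious gap in the crucial step that is supposed to produce the contradiction, and this gap is precisely what drives the extra machinery in the paper.

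You write that $A \cdot A \cdot A \subset H$ together with $\dim_H H = \alpha$ forces $N(A^3,\delta) \leq \delta^{-\alpha - o(1)}$ along some sequence of scales. This is false: Hausdorff dimension gives no upper bound on $\delta$-covering numbers of subsets. Indeed, since $H$ is dense, $N(H \cap U,\delta) = N(U,\delta) \sim \delta^{-\dim G}$ for any compact neighborhood $U$, so the inclusion $A^3 \subset H$ puts no constraint at all on $N(A^3,\delta)$ beyond the trivial $\delta^{-\dim G}$. The same confusion between Hausdorff dimension and box-counting appears in your proposed non-concentration argument: you want to say that a union of well-separated translates $h_iA \subset H$ would ``exceed the Frostman bound corresponding to dimension $\alpha$,'' but a $\sigma$-Frostman measure only gives a \emph{lower} bound on the dimension of a set it charges; it gives no upper bound on covering numbers of arbitrary subsets of $H$. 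So neither the contradiction nor the non-concentration verification closes up as stated.

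The paper's proof is organized precisely to route around this problem by staying in the category of measures until the very end. The combinatorial Product Theorem is used indirectly, to control the set $\Xi$ of troublemakers of a discretized set (Proposition~\ref{troublemakers}), and density is exploited through Lemma~\ref{parameters} and Proposition~\ref{outofxi} to show that one can escape from $\Xi$ by a fixed word map $w$. This feeds into the $L^2$ Flattening Lemma~\ref{flattening}, which shows that if $\mu$ is $\sigma$-Frostman and supported on a topological generating set, then the convolution square of the pushforward $w_*\mu^{\otimes p}$ is $(\sigma+\epsilon_1)$-Frostman. Only then is Frostman's Lemma applied, and in the direction that is actually valid: a $(\sigma+\epsilon_1)$-Frostman measure supported on a fixed power $A^k \subset H$ forces $\dim_H H \geq \sigma + \epsilon_1$, contradicting $\dim_H H = \sigma$. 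Your instinct to use density, and the shape of the final contradiction, are consistent with the paper; but the step claiming $N(A^3,\delta) \lesssim \delta^{-\alpha}$ from a Hausdorff dimension bound cannot be repaired, and the argument must instead be carried through on measures as in Section~\ref{section:flattening}.
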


For the group $SU(2)$, Theorem~\ref{nointermediatei} was proved by Lindenstrauss and Saxcé \cite{lindenstrausssaxcesu2}.
In contrast, it is shown in \cite{saxcenilpotent} that in a connected nilpotent Lie group $G$ there exist dense Borel measurable subgroups of arbitrary dimension between $0$ and $\dim G$.

The study of subgroups of Lie groups with intermediate Hausdorff dimension started with the work of Erd\H{o}s and Volkmann \cite{erdosvolkmann}, who constructed additive subgroups of the real line with arbitrary Hausdorff dimension between $0$ and $1$, and conjectured that any Borel subring of the reals has Hausdorff dimension $0$. This conjecture was settled by Edgar and Miller \cite{edgarmiller} in 2002, and shortly afterwards, Bourgain \cite{bourgainringconjecture,bourgainprojection} provided an independent and more quantitative solution.

The proof of Theorem~\ref{nointermediatei} given in this paper follows the strategy of \enquote{discretization} used by Bourgain in its solution to the Erd\H{o}s-Volkmann Conjecture, and also yields the following more precise theorem.

\begin{theorem}\label{dimineqi}
Let $G$ be a connected simple real Lie group endowed with a Riemannian metric.
There exists a neighborhood $U$ of the identity in $G$ and a positive integer $k$ such that for all $\sigma>0$, there exists $\epsilon=\epsilon(\sigma)>0$ such that the following holds.\\
Suppose $A$ is a Borel subset of $U$ generating a dense subgroup of $G$ and with Hausdorff dimension $\dim_H A \in [\sigma,\dim G-\sigma]$, then
$$\dim_H A^k \geq \dim_H A + \epsilon,$$
where $A^k$ denotes the set of all elements of $G$ that can be written as products of $k$ elements of $A$.
\end{theorem}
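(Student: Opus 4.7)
I would follow the discretization approach introduced by Bourgain \cite{bourgainringconjecture} to solve the Erd\H{o}s--Volkmann ring conjecture, adapted to the non-commutative setting of a simple Lie group as in \cite{lindenstrausssaxcesu2}. The engine is a \emph{discretized product theorem} for $G$: at a small scale $\delta>0$, if $B\subset G$ has $\delta$-covering number $N_\delta(B)\approx\delta^{-\alpha}$ for some $\alpha\in[\sigma,\dim G-\sigma]$ and satisfies a quantitative non-concentration condition near every proper closed subgroup of $G$, then $N_\delta(B^k)\geq\delta^{-\epsilon}N_\delta(B)$ for some integer $k=k(G)$ and $\epsilon=\epsilon(\sigma)>0$. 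Such a statement is available for $SU(2)$ (as used in \cite{lindenstrausssaxcesu2}) and, in the general simple case, via extensions of the Bourgain--Gamburd method. Theorem~\ref{dimineqi} is then obtained by feeding the Borel set $A$ to such a product theorem at a carefully chosen scale.

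\textbf{Main steps.} First, by Frostman's lemma, for each $\eta>0$ I pick a probability measure $\mu$ supported on $A$ with $\mu(B(g,r))\leq r^{\alpha-\eta}$ for all $g\in G$ and $r>0$, where $\alpha=\dim_H A$. Second, a pigeonhole over dyadic scales (based on the definition of $\dim_H A$) produces arbitrarily small scales $\delta$ at which $\mu$ looks genuinely $\alpha$-dimensional: $N_\delta(\Supp\mu)\approx\delta^{-\alpha}$ and the Frostman bound persists at scale $\delta$. Third, I would establish non-concentration: for some $\kappa=\kappa(\sigma)>0$, for every proper closed subgroup $H<G$, every $g\in G$, and every $r\in[\delta,\delta_0]$, $\mu(\mathcal{N}_r(gH))\leq r^\kappa$. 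Fourth, the discretized product theorem then yields $N_\delta((\Supp\mu)^k)\geq\delta^{-\alpha-\epsilon}$, from which $\dim_H A^k\geq\alpha+\epsilon'$ follows by standard mass-distribution considerations on $\mu^{*k}$, whose support lies in $A^k$.

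\textbf{Main obstacle.} The non-concentration step is the crux. The hypothesis that $\langle A\rangle$ is dense in $G$ only says $A\not\subset H$ for any proper closed subgroup $H$; it carries no quantitative information. To upgrade this to uniform non-concentration at the chosen scale, I would exploit that connected proper Lie subgroups of $G$ correspond to a compact family of Lie subalgebras in the Grassmannian of $\g$, so that a compactness plus pigeonhole argument forces a scale $\delta$ at which non-concentration holds uniformly over all such subgroups simultaneously. Handling discrete subgroups, covering the full range $\alpha\in[\sigma,\dim G-\sigma]$ uniformly, and ensuring that $\epsilon$ depends only on $\sigma$ (and not on $A$) are further sources of technical difficulty. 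The natural workaround is to replace $A$ by $A^{k_0}$ for a fixed $k_0=k_0(G)$: by simplicity of $G$, iterated products of a set not contained in any proper closed subgroup must spread transversely to every such subgroup, which should convert qualitative density into the required quantitative non-concentration.
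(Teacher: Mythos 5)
Your broad outline matches the paper's \emph{method} (Bourgain-style discretization: Frostman's Lemma, dyadic pigeonholing to an \enquote{honest} scale, a discretized product statement, and conversion back to Hausdorff dimension), but the step you yourself flag as the crux — \emph{quantitative non-concentration of $A$ near proper closed subgroups} — is a genuine gap, and your suggested workaround does not close it. The density hypothesis cannot be upgraded to a bound of the form $\mu\bigl(\mathcal{N}_r(gH)\bigr)\le r^\kappa$ for all proper closed $H$, all $g$, and all $r$, even after replacing $A$ by a bounded power $A^{k_0}$. Consider $A=E\cup\{a,b\}$ where $E\subset H$ is a Borel subset of a proper closed subgroup $H$ with intermediate Hausdorff dimension, and $(a,b)$ is a pair which together with $H$ topologically generates $G$. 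Then $\langle A\rangle$ is dense and $\dim_H A=\dim_H E\in(\sigma,\dim G-\sigma)$ is achievable, but any $\sigma$-Frostman measure on $A$ essentially lives on $E\subset H$, and $A^{k_0}$ lies in a union of $\le 3^{k_0}$ sets each contained in a coset of $H$ times a bounded word; so the measure still concentrates entirely on finitely many translates of $H$. \enquote{Spreading transversely after bounded powers} simply fails for such sets, which lie well within the theorem's hypotheses.

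The paper's route deliberately sidesteps any non-concentration statement for $A$. The discretized Product Theorem it uses (Theorem~\ref{producttheorem}) takes as input only covering-number hypotheses (conditions (1)--(2)), \emph{not} non-concentration near subgroups; its conclusion is precisely that when tripling fails, $A$ \emph{is} trapped near a proper subgroup chunk. The structural work is then moved to the set $\Xi$ of \enquote{troublemakers} of a $(\sigma,\epsilon)$-set: Proposition~\ref{troublemakers} shows $\Xi$ is covered by $\delta^{-O(\epsilon)}$ neighborhoods of coset chunks, using the Balog--Szemer\'edi--Gowers theorem, approximate-subgroup structure, and Lemma~\ref{niceh}. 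Density of $\langle A\rangle$ enters only through Lemma~\ref{parameters}: it yields finitely many \emph{fixed} elements $a_1,\dots,a_k\in A^s$ whose adjoints escape from every proper subspace of $\mathfrak g$. These $a_i$ are baked into a word map $w$ (equation (\ref{ww})), and the escape argument (Lemma~\ref{dfirst}, Proposition~\ref{outofxi}) shows that for any large $\Omega\subset A^{\times n}$, some projection built from the $a_i$'s must leave $\Xi$ — an assertion about configurations under $w$, not about the distribution of $A$ near subgroups. Corollary~\ref{finalcombinatorial} then gives expansion of $w(\Omega)$, and the Flattening Lemma~\ref{flattening} converts this to a Frostman exponent gain for $\nu*\nu$, where $\nu=w_*\mu^{\otimes p}$. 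To repair your proposal you would either need to carry out this troublemaker/escape analysis in place of the non-concentration step, or prove a product theorem whose only inputs are covering numbers (which is exactly Theorem~\ref{producttheorem}) and then handle the resulting dichotomy as the paper does.
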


It should be noted that the assumption that the set $A$ is Borel measurable cannot be omitted. Indeed, Davies~\cite{daviesunpublished} showed that there exist non-Borel subfields of the real line of arbitrary Hausdorff dimension (see also~\cite{falconerlargeintersection}); it is then easy to check that if $F$ is a subfield of $\R$ of Hausdorff dimension $\alpha$, then the subgroup $SL(2,F)$ in $SL(2,\R)$ has Hausdorff dimension $3\alpha$.

\paragraph{}

The idea of \enquote{discretization} is to translate problems about Hausdorff dimension into combinatorial problems about covering numbers of sets by balls of some small fixed radius $\delta$.
For that, Katz and Tao \cite{katztao} introduced the notion of $(\sigma,\epsilon)$-set at scale $\delta$, which is the natural discretized analog of sets of Hausdorff dimension $\sigma$.
The study of Hausdorff dimension of product sets then consists into three steps:
first, one proves a combinatorial statement about covering numbers of $(\sigma,\epsilon)$-sets at scale $\delta$,
then one deduces from it a flattening statement for measures,
and finally, using Frostman's Lemma, on derives an inequality on Hausdorff dimensions.

In the proof of Theorems~\ref{nointermediatei} and \ref{dimineqi}, the combinatorial part is based on a discretized Product Theorem for simple Lie groups \cite[Theorem~1.1]{saxceproducttheorem}.
A key point in this combinatorial analysis is to understand the set $\Xi$ of \enquote{troublemakers} of a subset $A$ in $G$.
Roughly speaking, those are the elements $\xi$ such that there exist large subsets $A'$ and $B'$ in $A$ such that the product set $A'\xi B'$ is not much larger than $A$.
By controlling the structure of approximate subgroups in $G$, we will show that if $A$ is a $(\sigma,\epsilon)$-set at scale $\delta$, then the set $\Xi$ is included in a union of few neighborhoods of cosets of closed subgroups of $G$.
This observation will allow us to prove the expansion statement needed to derive flattening of measures.

\paragraph{}

The plan of the paper is as follows.
In Section~\ref{section:subgroups}, we investigate the structure of approximate subgroups of $G$ and derive some elementary lemmas about subgroup chunks.
Section~\ref{section:troublemakers} is devoted to the proof of the combinatorial discretized version of Theorem~\ref{dimineqi}.
Finally, in Section~\ref{section:flattening}, we prove a Flattening Lemma for Frostman measures, and carry out the applications to Hausdorff dimension of product sets.

\paragraph{Acknowledgements}
I am very grateful to Yves Benoist for many helpful and motivating discussions, for his precious comments on a previous version of this manuscript, and above all, for his enthusiasm for this problem.\\
I also thank Emmanuel Breuillard, with whom this problem was raised, during my doctoral thesis under his supervision, and Elon Lindenstrauss for interesting discussions.

\section{Approximate subgroups and subgroup chunks}
\label{section:subgroups}

\subsection{Controlling approximate subgroups}

We start by recalling some elementary facts from additive combinatorics. If $A$ and $B$ are subsets of a group $G$, we denote by $AB$ the product set of $A$ and $B$, i.e.
$$AB = \{ab \,;\, a\in A,\ b\in B\}.$$
Similarly, for $k\geq 1$, $A^k$ denotes the set of elements that can be written as the product of $k$ elements of $A$.
An important definition for us will be that of an approximate subgroup, due to Tao~\cite{taoestimates}.

\begin{definition}
Let $G$ be a metric group, and $K\geq 1$ a parameter. A $K$-approximate subgroup of $G$ is a subset of $G$ satisfying
\begin{itemize}
\item $A$ is symmetric and contains the identity.
\item There exists a finite set $X$ of cardinality at most $K$ such that $AA\subset XA$.
\end{itemize}
\end{definition}

In this paper, $G$ will always denote a connected simple Lie group, endowed with a left-invariant Riemannian metric.
If $A$ is a bounded subset of $G$, and $\delta>0$ is some small scale, we denote by $N(A,\delta)$ the minimal number of balls of radius $\delta$ needed to cover $A$.

For the application to the study of Hausdorff dimension of product sets, the following definition, due to Katz and Tao~\cite{katztao} is appropriate.

\begin{definition}
Let $G$ be a real Lie group of dimension $d$. Given $\sigma\in(0,d)$ and $\epsilon>0$, we say that a subset $A$ in $G$ is a \emph{$(\sigma,\epsilon)$-set at scale $\delta$} if it satisfies
\begin{enumerate}
\item $N(A,\delta) \leq \delta^{-\sigma-\epsilon}$
\item For all $\rho\geq\delta$, for all $x$ in $G$, $N(A\cap B(x,\rho),\delta) \leq \rho^\sigma\delta^{-\epsilon}N(A,\delta)$.
\end{enumerate}
\end{definition}

\begin{remark}
One should think of $(\sigma,\epsilon)$-sets at scale $\delta$ as sets of Hausdorff dimension $\sigma$ discretized at scale $\delta$. The parameter $\epsilon$ quantifies what we lose in the discretization process.
\end{remark}

\begin{example}
For $\sigma=\frac{\log 2}{\log 3}$ and any $\epsilon>0$, the usual triadic Cantor set is a $(\sigma,\epsilon)$-set at scale $\delta$ for all $\delta$ sufficiently small.
\end{example}

Given a connected simple Lie group $G$ endowed with a Riemannian metric,
we want to describe the structure of $(\sigma,\epsilon)$-sets in $G$ that are also $\delta^{-\epsilon}$-approximate subgroups.
For that purpose, we make the following definition.

\begin{definition}
Let $G$ be a Lie group, and fix $O$ a neighborhood of $0$ in the Lie algebra $\g$ on which the exponential map is injective. Given a symmetric neighborhood $U$ of the identity such that $U\subset\exp O$, we define a \emph{subgroup chunk in $U$} to be a set of the form
$U\cap\exp(O\cap\h)$, for some Lie subalgebra $\h<\g$.\\
Similarly, a \emph{coset chunk in $U$} is a set of the form $U\cap g\exp(O\cap\h)$, for some Lie subalgebra $\h<\g$ and some element $g$ in $U$.
\end{definition}

Throughout the paper, if $X$ is any subset of $G$ and $\rho$ some positive number, $X^{(\rho)}$ denotes the $\rho$-neighborhood of $X$ in $G$, i.e.
$$X^{(\rho)} = \{x\in G \,|\, d(x,X)\leq\rho\}.$$
What will allow us to control approximate subgroups with subgroup chunks is the following discretized Product Theorem \cite[Theorem~1.1]{saxceproducttheorem}.

\begin{theorem}[Product Theorem]\label{producttheorem}
Let $G$ be a simple real Lie group of dimension $d$. There exists a neighborhood $U$ of the identity in $G$ such that the following holds.\\
Given $\sigma\in (0,d)$, there exists $\tau=\tau(\sigma)>0$ and $\epsilon_0=\epsilon_0(\sigma)>0$ such that, for all $\epsilon\in(0,\epsilon_0)$, for all $\delta>0$ sufficiently small, if $A\subset U$ is a set satisfying
\begin{enumerate}
\item $N(A,\delta)\leq\delta^{-\sigma-\epsilon}$
\item $\forall \rho\geq\delta$, $N(A,\rho) \geq \delta^{\epsilon}\rho^{-\sigma}$
\item $N(AAA,\delta)\leq\delta^{-\epsilon} N(A,\delta)$
\end{enumerate}
then
there exists a closed connected subgroup $H\subset G$, such that
$$A \subset H^{(\delta^{\tau})}.$$
Moreover, $\tau$ and $\epsilon_0$ remain bounded away from zero when $\sigma$ varies in a compact subset of $(0,d)$.
\end{theorem}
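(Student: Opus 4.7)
My approach is to adapt the Bourgain--Gamburd--Helfgott scheme from the finite-field setting to the discretized continuous setting: tripling together with the non-concentration hypothesis should force either concentration near a proper subgroup or fast growth via a commutator-driven sum-product mechanism, and I would engineer the parameters so that the only consistent outcome is the conclusion.

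\textbf{Step 1 (Transfer to the Lie algebra).} First I would upgrade hypothesis~(3) using Plünnecke--Ruzsa inequalities in their covering-number form to obtain $N(A^k,\delta)\leq \delta^{-C_k\epsilon}N(A,\delta)$ for every fixed $k$; after possibly replacing $A$ by a large refinement via the discretized Balog--Szemerédi--Gowers theorem, I may assume $A$ is a $\delta^{-O(\epsilon)}$-approximate subgroup. I then pull $A$ back to $\g$ via $\log$, writing $\tilde A=\log A$. Choosing $U$ small enough, the Baker--Campbell--Hausdorff formula makes the identification of group products with Lie-algebra sums and iterated brackets essentially isometric at scale $\delta$, so both hypotheses (1) and (2) pass to $\tilde A\subset \g$ with only a negligible loss in the exponents.

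\textbf{Step 2 (Commutator/pivot dichotomy).} Let $\h_\tau$ be the smallest Lie subalgebra of $\g$ containing $\tilde A$ in its $\delta^\tau$-neighborhood. If $\h_\tau$ is a proper subalgebra for the desired $\tau=\tau(\sigma)$, then $H=\exp\h_\tau$ is a proper closed connected subgroup of $G$ and we obtain $A\subset H^{(\delta^\tau)}$. Otherwise, by simplicity of $\g$ I can extract by pigeonhole a finite sequence $\tilde a_1,\dots,\tilde a_m\in\tilde A$ and a list of iterated brackets of the $\tilde a_i$ that form a basis of $\g$, each of norm $\gtrsim \delta^\tau$ away from every proper subalgebra containing the $\tilde a_i$. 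Pushing these brackets back to $G$ via BCH realizes them as explicit elements of $A^{N_0}$ for some $N_0$ depending only on $\dim G$.

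\textbf{Step 3 (Sum-product to conclude).} The goal is now to show that, in the second alternative, $N(A^N,\delta)$ must be much larger than $\delta^{-\sigma}$, contradicting the Plünnecke bound from Step~1. I would pick a Cartan subalgebra $\mathfrak{a}\subset\g$ and use the commutators from Step~2 together with the $\Ad$-action to transport pieces of $A$ into a one-dimensional slice inside a root space; hypothesis~(2) then guarantees that this slice is itself non-concentrated at every scale $\rho\geq\delta$. Bourgain's discretized sum-product theorem on the real line yields $N((\mathrm{slice}+\mathrm{slice})\cdot(\mathrm{slice}+\mathrm{slice}),\delta)\geq \delta^{-\sigma-c(\sigma)}$, and after pushing back through the $\Ad$-conjugations this contributes a genuine growth $N(A^N,\delta)\geq \delta^{-\sigma-c'(\sigma)}$, contradicting $N(A^N,\delta)\leq \delta^{-\sigma-C_N\epsilon}$ as soon as $\epsilon<\epsilon_0(\sigma)$ is small enough.

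\textbf{Main obstacle.} The hardest step is the quantitative form of the pivot extraction in Step~2. What is really needed is a \L{}ojasiewicz-type inequality for the algebraic variety of $m$-tuples $(X_1,\dots,X_m)\in\g^m$ whose iterated Lie brackets up to depth $O(\dim G)$ fail to span $\g$: this is what produces a $\tau=\tau(\sigma)>0$ uniform on compact subsets of $(0,d)$ and lets the quantitative loop close against the $\delta^{-O(\epsilon)}$ losses accumulated in Steps~1 and~3. A secondary difficulty is that Step~3 is essentially an induction on $\dim G$ via the proper subgroups produced in the first alternative, so the whole scheme must be set up as a simultaneous induction and one must check at each stage that $\tau$ and $\epsilon_0$ do not degenerate.
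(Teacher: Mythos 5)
The Product Theorem is not proved in this paper: it is imported verbatim as a black box from \cite{saxceproducttheorem}, and all of its machinery lives in that reference. There is therefore no in-paper proof to compare your outline against; what follows is an assessment of the sketch on its own merits and against the strategy used in the cited source, which is a discretized Bourgain--Gamburd argument (escape from proper subgroups plus Bourgain's discretized sum--product), so your skeleton is pointed in the right general direction.

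Two of your steps, however, conceal genuine gaps. First, in Step~2 you claim that ``pushing these brackets back to $G$ via BCH realizes them as explicit elements of $A^{N_0}$.'' The BCH commutator identity $\exp(X)\exp(Y)\exp(-X)\exp(-Y)=\exp\bigl([X,Y]+\text{h.o.t.}\bigr)$ has error of order $\|X\|^2\|Y\|+\|X\|\|Y\|^2$, which for $\|X\|,\|Y\|\asymp 1$ on a fixed neighborhood $U$ is comparable to the main term $[X,Y]$, not $O(\delta)$. So the group commutator does \emph{not} realize the Lie bracket at scale $\delta$ for macroscopically separated elements, and after iterating to depth $O(\dim G)$ the accumulated error swamps everything; some rescaling or renormalization device is needed here and you have not supplied one. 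The same point undercuts the Step~1 claim that BCH renders the correspondence between group products and Lie-algebra sums ``essentially isometric at scale $\delta$'' -- it is, but only for factors within $O(\delta)$ of each other, which is not the regime your argument lives in. Second, in Step~3 the deduction that the one-dimensional slice inherits non-concentration at every scale $\rho\geq\delta$ is precisely the hard quantitative input, and it is not granted by hypothesis~(2) alone: one must know that $A$ does not cluster near proper subvarieties in the adjoint representation at all intermediate scales. In the actual argument this is supplied by a discretized Larsen--Pink-type dimensional inequality for approximate subgroups, not by a \L ojasiewicz estimate on bracket configurations; your ``main obstacle'' paragraph is therefore identifying a real difficulty but reaching for the wrong tool to resolve it.
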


\begin{remark}\label{se}
Note that if $A$ is a $(\sigma,\epsilon)$-set at scale $\delta$, then it necessarily satisfies the first two conditions of the Product Theorem.
\end{remark}

\begin{remark}\label{maxchunk}
In the conclusion of the Product Theorem, we may of course assume that the closed connected subgroup $H$ is maximal. If this is the case, then we know \cite[Proposition~2.1]{saxceproducttheorem} that, provided $U$ has been chosen small enough, $H\cap U$ is just the subgroup chunk in $U$ with Lie algebra $\h=\Lie H$.
\end{remark}

Given a $(\sigma,\epsilon)$-set $\widetilde{H}$ that is also a $\delta^{-\epsilon}$-approximate subgroup, we know from the Product Theorem that $\widetilde{H}$ is included in a small neighborhood of a proper subgroup chunk.
The purpose of the following lemma is to allow us to choose the subgroup chunk $H'$ of minimal dimension that can control $\widetilde{H}$.

\begin{lemma}\label{niceh}
Let $G$ be a simple Lie group of dimension $d$. There exists a neighborhood $U$ of the identity in $G$ such that the following holds.\\
Given $\sigma\in(0,d)$ and $b\in(0,1)$, there exist constants $K_\ell$ and $\tau_\ell=\tau_\ell(\sigma,b)>0$, for $\ell\in\{1,\dots,d-1\}$, and $\epsilon_0=\epsilon_0(\sigma)>0$ such that the following holds for any $\epsilon\in(0,\epsilon_0)$ and any $\delta>0$ small enough.\\
Suppose $\widetilde{H}\subset U$ is a $(\sigma,\epsilon)$-set at scale $\delta$ and a $\delta^{-\epsilon}$-approximate subgroup.\\
There exists $\ell$ in $\{1,\dots,d-1\}$, a subgroup chunk $H'$ in $U^4$
of dimension $\ell$ and a subset $\widetilde{H}'\subset H'^{(\tau_\ell)}$, such that:
\begin{enumerate}
\item There is a finite set $X$ of cardinality at most $\delta^{-K_\ell\epsilon}$ such that\\
$\widetilde{H}\subset X\widetilde{H}'\cap \widetilde{H}'X$.
\item If $D$ is any coset chunk in $U$ such that $\dim D<\ell$, then\\
$N(\widetilde{H}'\cap D^{(\delta^{b\tau_\ell})},\delta) \leq \delta^{8K_\ell\epsilon}N(\widetilde{H},\delta)$.
\end{enumerate}
\end{lemma}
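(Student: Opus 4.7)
My plan is a descent on the dimension $\ell$, starting from a proper subgroup chunk containing $\widetilde{H}$ given by the Product Theorem, and iteratively reducing the dimension via a Ruzsa covering argument whenever condition~(2) fails. The dimension lives in $\{1,\dots,d-1\}$ and strictly decreases, so the procedure halts after at most $d-2$ steps with a pair satisfying both~(1) and~(2).

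\emph{Initialization.} By Remark~\ref{se} and the approximate-subgroup property (which yields $N(\widetilde{H}\widetilde{H}\widetilde{H},\delta)\leq\delta^{-2\epsilon}N(\widetilde{H},\delta)$), $\widetilde{H}$ satisfies hypotheses~(1)--(3) of Theorem~\ref{producttheorem} after replacing $\epsilon$ by $2\epsilon$. Taking the output subgroup to be maximal (Remark~\ref{maxchunk}), this yields a subgroup chunk $H'_0:=H_0\cap U^4$ of some dimension $\ell_0\leq d-1$ with $\widetilde{H}\subset (H'_0)^{(\delta^{\tau_{\ell_0}})}$. Initialize $\widetilde{H}'_0:=\widetilde{H}$ and $X_0:=\{e\}$; condition~(1) holds trivially with these choices.

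\emph{Descent step.} Suppose inductively we have $(H'_i,\widetilde{H}'_i,X_i)$ satisfying condition~(1) with $|X_i|\leq\delta^{-K_{\ell_i}\epsilon}$, but condition~(2) fails: there is a coset chunk $D=g\exp(O\cap\h)\subset U$ with $\dim\h<\ell_i$ such that $A:=\widetilde{H}'_i\cap D^{(\delta^{b\tau_{\ell_i}})}$ has $N(A,\delta)>\delta^{8K_{\ell_i}\epsilon}N(\widetilde{H},\delta)$. From $\widetilde{H}\widetilde{H}\subset Y\widetilde{H}$ with $|Y|\leq\delta^{-\epsilon}$ and $A\subset\widetilde{H}'_i$, one bounds
\[
 N(\widetilde{H}\cdot A,\delta)\leq N(\widetilde{H}\widetilde{H},\delta)\leq\delta^{-\epsilon}N(\widetilde{H},\delta)\leq \delta^{-(8K_{\ell_i}+1)\epsilon}N(A,\delta).
\]
The discretized Ruzsa covering lemma then produces a set $X$ of cardinality $\leq\delta^{-(8K_{\ell_i}+1)\epsilon}$ with $\widetilde{H}\subset X\cdot AA^{-1}$; since $AA^{-1}$ is symmetric and $\widetilde{H}=\widetilde{H}^{-1}$, the identity $\widetilde{H}\subset AA^{-1}\cdot X^{-1}$ follows by taking inverses. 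Now, writing each element of $A$ as $g\exp(Y)e$ with $Y\in O\cap\h$ and $\|e\|\leq\delta^{b\tau_{\ell_i}}$, smoothness of multiplication yields $AA^{-1}\subset (g\exp(O\cap\h)g^{-1})^{(C\delta^{b\tau_{\ell_i}})}$ for some $C=C(G,U)$. The conjugate $g\exp(O\cap\h)g^{-1}$ is locally a connected subgroup with Lie algebra $\Ad(g)\h$, so intersecting with $U^4$ defines a subgroup chunk $H'_{i+1}$ of dimension $\ell_{i+1}:=\dim\h<\ell_i$. Provided the constants are arranged so that $\tau_{\ell_{i+1}}\leq b\tau_{\ell_i}/2$ and $K_{\ell_{i+1}}\geq 8K_{\ell_i}+1$, we obtain $\widetilde{H}'_{i+1}:=AA^{-1}\subset (H'_{i+1})^{(\delta^{\tau_{\ell_{i+1}}})}$ with $X_{i+1}:=X$, closing the induction.

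\emph{Main obstacle.} The principal delicacy is consistent bookkeeping of the constants: $K_\ell$ and $\tau_\ell$ must be fixed in advance and must satisfy the decreasing-in-$\ell$ recursions that absorb both the Ruzsa loss ($K_{\ell-1}\geq 8K_\ell+1$) and the smoothness loss ($\tau_{\ell-1}\ll b\tau_\ell$) at each reduction step; since there are at most $d-2$ reductions, these recursions close with constants depending only on $d,\sigma,b$. A secondary, more concrete technical point is verifying that the conjugate $g\exp(O\cap\h)g^{-1}$ genuinely defines a subgroup chunk inside $U^4$ and that $AA^{-1}$ sits in its prescribed neighborhood: this relies on $U$ being small relative to the injectivity radius of $\exp$ and on $A$ staying inside $U^2$, which is inherited from $A\subset D^{(\delta^{b\tau_{\ell_i}})}\subset U^{(\delta^{b\tau_{\ell_i}})}\subset U^2$ for $\delta$ small enough.
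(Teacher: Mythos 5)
Your descent approach is genuinely different from the paper's proof, which instead fixes $\ell$ once and for all as the \emph{maximal} dimension for which a non-concentration bound holds for $\widetilde{H}$ itself (not $\widetilde{H}'$), and then performs a single Ruzsa covering. The iterative descent is a reasonable alternative architecture, but as written it has a genuine gap in the termination.

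The gap is this: you assert that ``the dimension lives in $\{1,\dots,d-1\}$ and strictly decreases, so the procedure halts after at most $d-2$ steps with a pair satisfying both~(1) and~(2),'' but nothing forces the descent to stop before $\ell=0$. If condition~(2) fails at $\ell_i=1$, your reduction step produces a next chunk of dimension $0$, which is outside the admissible range and makes the conclusion of the lemma false. What is missing is an argument that condition~(2) \emph{cannot} fail at $\ell=1$. This is precisely where the second half of the $(\sigma,\epsilon)$-set hypothesis (the Frostman-type bound $N(\widetilde{H}\cap B(x,\rho),\delta)\leq\rho^\sigma\delta^{-\epsilon}N(\widetilde{H},\delta)$) must be invoked: since $\widetilde{H}'_i\subset\widetilde{H}^{m_i}$ for some bounded $m_i$ (by construction), the approximate-subgroup property lets you cover $\widetilde{H}'_i$ by $\delta^{-O(\epsilon)}$ translates of $\widetilde{H}$, and then the Frostman bound forces $N(\widetilde{H}'_i\cap B(x,\delta^{b\tau_1}),\delta)\leq\delta^{\sigma b\tau_1 - O(\epsilon)}N(\widetilde{H},\delta)$, which is strictly smaller than $\delta^{8K_1\epsilon}N(\widetilde{H},\delta)$ once $\epsilon_0$ is chosen small enough. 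Without this step the proof is incomplete: your argument only uses the $(\sigma,\epsilon)$-set hypothesis in the initialization for the Product Theorem, never in the descent, whereas the lower bound $\ell\geq 1$ in the paper's proof rests squarely on it.

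A secondary inaccuracy: the displayed bound $N(\widetilde{H}\cdot A,\delta)\leq N(\widetilde{H}\widetilde{H},\delta)$ only holds when $A\subset\widetilde{H}$, i.e.\ at $i=0$. For $i\geq 1$ you have $A\subset\widetilde{H}'_i=A_{i-1}A_{i-1}^{-1}$, so $\widetilde{H}'_i\subset\widetilde{H}^{2^i}$ and you must use the approximate-subgroup property to control $N(\widetilde{H}\widetilde{H}'_i,\delta)\leq\delta^{-O_d(\epsilon)}N(\widetilde{H},\delta)$; the resulting loss then has to be folded into the recursion for $K_\ell$. You flag the constant bookkeeping as the ``main obstacle,'' so you may already be aware of this, but the written chain of inequalities is wrong as stated for all but the first step.
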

\begin{proof}
Choose a symmetric neighborhood $U$, and parameters $\tau$ and $\epsilon_0$ for which the Product Theorem~\ref{producttheorem} holds. We also assume that $U^4$ is still an exponential neighborhood of the identity in $G$.\\
Then, let $K_\ell=2\cdot 10^{d-\ell}$ and $\tau_\ell=(\frac{b}{3})^{d-\ell}\tau$, and choose $\ell$ maximal such that for any coset chunk $D$ of dimension less than $\ell$,
$$N(\widetilde{H}\cap D^{(\delta^{\frac{b\tau_\ell}{2}})},\delta) \leq \delta^{9K_\ell\epsilon}N(\widetilde{H},\delta).$$
By the Product Theorem~\ref{producttheorem} and Remark~\ref{maxchunk}, there exists a proper subgroup chunk $H_0$ in $U$ such that $\widetilde{H}\subset H_0^{(\delta^{\tau})}$. This shows that $\ell\leq d-1$.\\
On the other hand, coset chunks of dimension $0$ are just points, and using that $\widetilde{H}$ is a $(\sigma,\epsilon)$-set at scale $\delta$, we see that, provided $\epsilon$ is sufficiently small, for any $x$ in $G$, one has,
$$N(\widetilde{H}\cap B(x,\delta^{\frac{b\tau_1}{2}}),\delta)
\leq \delta^{\sigma \frac{b\tau_1}{2}-\epsilon}N(\widetilde{H},\delta)
\leq \delta^{9K_1\epsilon}N(\widetilde{H},\delta).$$
So we also have $\ell\geq 1$.\\
By maximality of $\ell$, there exists an $\ell$-dimensional coset chunk $C$ in $U$ such that
$$N(\widetilde{H}\cap C^{(\delta^{\frac{b\tau_{\ell+1}}{2}})},\delta)
= N(\widetilde{H}\cap C^{(\delta^{\frac{3\tau_\ell}{2}})},\delta)
\geq \delta^{9K_{\ell+1}\epsilon}N(\widetilde{H},\delta).$$
Writing $C=gH'$ for some subgroup chunk $H'$ and some element $g$ in $U$, one readily sees that, for some constant $L$ depending only on $U$,
$$N(\widetilde{H}^2\cap H'^{(L\delta^{\frac{3\tau_{\ell}}{2}})},\delta)
\geq \delta^{9K_{\ell+1}\epsilon}N(\widetilde{H},\delta)
\geq \delta^{\frac{9}{10}K_\ell\epsilon} N(\widetilde{H},\delta).$$
Note that we allow ourselves here a slight abuse of notation, denoting by $H'$ both the subgroup chunk in $U$ and the subgroup chunk in $U^2$.\\
Let $A=\widetilde{H}^2\cap H'^{(L\delta^{\frac{3\tau_\ell}{2}})}$ and $B=\widetilde{H}$.
We have
$$N(AB,\delta) \leq N(\widetilde{H}^3,\delta) \leq \delta^\epsilon N(\widetilde{H},\delta) \leq \delta^{(K_\ell-1)\epsilon}N(A,\delta),$$
so that by Rusza's Covering Lemma (see below Lemma~\ref{covering}), we find that there exists a finite set $X$ of cardinality at most $\delta^{-K_\ell\epsilon}$ such that 
$$\widetilde{H} \subset X\widetilde{H}'\cap \widetilde{H}'X,$$
where $\widetilde{H}'$ is a neighborhood of size $O(\delta)$ of the set $(\widetilde{H}^2\cap H'^{(L\delta^{\frac{3\tau_\ell}{2}})})^2$.
Provided $\delta$ is sufficiently small, we have $\widetilde{H}'\subset H'^{(\delta^{\tau_\ell})}$, where $H'$ now stands for the subgroup chunk in $U^4$.

It remains to check Condition~2.
Using that $\widetilde{H}$ is a $\delta^{-\epsilon}$-approximate subgroup, one sees that $\widetilde{H}'$ can be covered by at most $\delta^{-4\epsilon}$ translates of neighborhoods of $\widetilde{H}$ of size $O(\delta)$:
$$\widetilde{H}' \subset \bigcup_{i=1}^{\delta^{-4\epsilon}} x_i\widetilde{H}^{(O(\delta))}.$$
Let $D$ be any coset chunk in $U$ of dimension less than $\ell$.\\
For each $i$, we have,
$$N(x_i\widetilde{H}^{(O(\delta))}\cap D^{(\delta^{b\tau_\ell})},\delta)
= N(\widetilde{H}^{(O(\delta))}\cap x_i^{-1}D^{(\delta^{b\tau_\ell})},\delta)
\leq N(\widetilde{H}\cap x_i^{-1}D^{(\delta^{\frac{b\tau_\ell}{2}})},\delta),$$
and therefore, by assumption on $\ell$,
$$N(x_i\widetilde{H}^{(O(\delta))}\cap D^{(\delta^{b\tau_\ell})},\delta)
\leq \delta^{9K_\ell\epsilon} N(\widetilde{H},\delta).$$
This shows that
$$N(\widetilde{H}'\cap D^{(\delta^{b\tau_\ell})},\delta) \leq \delta^{-4\epsilon}\delta^{9K_\ell\epsilon} N(\widetilde{H},\delta) \leq \delta^{8K_\ell\epsilon} N(\widetilde{H},\delta).$$
\end{proof}

For convenience of the reader, we now give the version of Ruzsa's Covering Lemma we used in the above proof.

\begin{lemma}[Ruzsa Covering Lemma]\label{covering}
Let $G$ be a Lie group and $U$ a compact neighborhood of the identity. There exists a positive constant $L$ such that the following holds for any parameter $K\geq 1$.\\
Suppose $A$ and $B$ are subsets of $U$ such that $N(AB,\delta)\leq K N(A,\delta)$.
Then there exists a finite set $X$ of cardinality at most $LK$ such that $B$ is included in the neighborhood of size $L\delta$ of $A^{-1}AX$.
Similarly, if $N(BA,\delta)\leq K N(A,\delta)$, there exists a finite set $Y$ of cardinality at most $LK$ such that $B$ is included in the neighborhood of size $L\delta$ of $YAA^{-1}$.
\end{lemma}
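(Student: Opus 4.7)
The plan is to reduce the discretized Ruzsa Covering Lemma to the classical, combinatorial version by passing to maximal $\delta$-separated nets in $A$ and $B$, running the greedy argument on these nets, and translating the conclusion back to the metric setting. The constant $L$ will absorb two sources of loss: the multiplicative comparison between covering numbers and $\delta$-packing numbers on a compact subset of a Riemannian manifold, and the $\delta$-fattening needed to recover points of $B$ from its $\delta$-net.

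First I would fix the compact neighborhood $U$, choose maximal $\delta$-separated subsets $\tilde A \subset A$ and $\tilde B \subset B$, and use the bounded geometry of the Riemannian metric on $U$ to produce a constant $L_0$, depending only on $U$, such that $N(X,\delta) \leq |\tilde X| \leq L_0 N(X,\delta)$ for every $X \subset U$ and every maximal $\delta$-separated $\tilde X \subset X$. Since $\tilde A \tilde B \subset AB$ and $\tilde A \tilde B$ is itself $\delta$-separated, the hypothesis $N(AB,\delta) \leq K N(A,\delta)$ gives
$$|\tilde A \tilde B| \leq L_0 N(AB,\delta) \leq L_0 K N(A,\delta) \leq L_0 K |\tilde A|.$$
Next I would select $X \subset \tilde B$ greedily: as long as possible, add an element $x \in \tilde B$ for which $\tilde A x$ is disjoint from the already chosen translates. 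Since the selected translates $\{\tilde A x : x \in X\}$ are pairwise disjoint and contained in $\tilde A \tilde B$, one obtains $|X| \leq |\tilde A \tilde B|/|\tilde A| \leq L_0 K$. Maximality of $X$ forces, for every $b \in \tilde B$, the existence of $x \in X$ with $\tilde A b \cap \tilde A x \neq \emptyset$, hence $\tilde B \subset \tilde A^{-1} \tilde A X \subset A^{-1}A X$.

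Since every $b \in B$ lies within distance $\delta$ of some $\tilde b \in \tilde B$, this yields $B \subset (A^{-1}AX)^{(\delta)}$, and the first conclusion holds after enlarging $L$ to dominate both the bound $|X| \leq L_0 K$ and the $\delta$-fattening. The right-handed statement is entirely symmetric: replace the left translates $\tilde A x$ by right translates $x \tilde A$ throughout the greedy selection, use the hypothesis $N(BA,\delta) \leq K N(A,\delta)$ instead, and the same argument produces $Y \subset \tilde B$ with $B \subset (YAA^{-1})^{(L\delta)}$. There is no genuine obstacle here; the only care needed is to keep the constant $L_0$ uniform on the fixed compact neighborhood $U$ (immediate from the smoothness of the Riemannian metric on $G$) and to note that the left-invariance of the metric makes $d(b,\tilde b) \leq \delta$ equivalent to $b$ lying in the $\delta$-neighborhood of $\tilde b$, so that the non-commutativity of $G$ plays no role.
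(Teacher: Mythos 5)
The proposal's central estimate breaks down at the very first step. You claim that $\tilde A\tilde B$ is $\delta$-separated because $\tilde A$ and $\tilde B$ are. This is false: a product of two $\delta$-separated sets need not be $\delta$-separated, even in $\R$. For instance, if $a_1, a_2\in\tilde A$ with $d(a_1,a_2)=\delta$ and $b_1, b_2\in\tilde B$ with $d(b_1,b_2)=\delta$, nothing prevents $a_1b_1$ and $a_2b_2$ from coinciding or being arbitrarily close (take $a_1=0$, $a_2=\delta$, $b_1=\delta$, $b_2=0$ in the real line). Consequently the inequality $|\tilde A\tilde B|\leq L_0 N(AB,\delta)$ does not follow; in general $|\tilde A\tilde B|$ can be on the order of $|\tilde A|\cdot|\tilde B|$, far exceeding $N(AB,\delta)$, so your cardinality bound $|X|\leq|\tilde A\tilde B|/|\tilde A|\leq L_0 K$ has no justification. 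This is exactly the point where the continuous problem differs from the finitary Ruzsa lemma: pairwise \emph{disjointness} of the sets $\tilde A x$ in the greedy step is too weak a condition, because in a metric group disjoint sets can still clump at scale $\delta$, and the sum of their cardinalities then overcounts the $\delta$-covering number of their union.

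The fix is to demand a quantitative separation in the selection of $X$ rather than mere disjointness, which is what the paper does: choose $X\subset B$ maximal so that the translates $Ax_i$ and $Ax_j$ are at distance greater than $2\delta$ from one another for $i\neq j$. Then the estimate $N(AB,\delta)\geq\sum_i N(Ax_i,\delta)$ is legitimate because the translates genuinely pack at scale $\delta$, and $N(Ax_i,\delta)\gg N(A,\delta)$ follows from the bi-Lipschitz control on right translation by elements of $U$ (this requires a constant $L_1$ and a comparison $N(A,L_1\delta)\gg N(A,\delta)$ on the compact $U$). Maximality of $X$ then forces every $b\in B$ to be within $O(\delta)$ of some $A^{-1}Ax_i$. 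If you rewrite your greedy step using a $2\delta$-separation condition rather than disjointness, and replace the $|\tilde A\tilde B|$ cardinality count with the packing estimate above, the argument closes.
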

\begin{proof}
Let $X=\{x_1,\dots,x_s\}$ be maximal among subsets of $B$ such that for each $i\neq j$, the translates $Ax_i$ and $Ax_j$ are away from each other by at least $2\delta$, in the sense that
$$\forall x\in Ax_i,\ \forall y\in Ax_j,\ d(x,y)> 2\delta.$$
Let $L_1>0$ such that left and right translations by elements of $U$ are $L_1$-bi-Lipschitz on $U$.
For each $i$, we have
$$N(Ax_i,\delta) \geq N(A,L_1\delta) \gg N(A,\delta).$$
The set $AB$ contains all the translates $Ax_i$, and those are $2\delta$ separated, so we find
$$N(AB,\delta) \geq \sum N(Ax_i,\delta) \gg (\card X) N(A,\delta),$$
and therefore,
$$\card X \ll K.$$
On the other hand, by maximality of $X$, if $b$ is any element of $B$, there exists an element $x_i$ is $X$ such that $Ab$ meets the neighborhood of size $2\delta$ of $Ax_i$. This shows that $d(b, A^{-1}Ax_i)\leq 2L_1\delta$ and thus,
$$B \subset A^{-1}A X^{(2L_1\delta)}.$$
\end{proof}

\subsection{Intersections of neighborhoods}

For us, an important property of neighborhoods of coset chunks is that they are stable under intersection.
Recall that if $X$ is a subset of $G$, then $X^{(\rho)}$ denotes the $\rho$-neighborhood of $X$.
The lemma we will need is as follows.

\begin{lemma}\label{cosetintersection}
Let $G$ be a real Lie group.
There exists a neighborhood $U$ of the identity in $G$ and constants $a,b>0$ such that for all $\rho>0$ sufficiently small, for any two coset chunks $C_1$ and $C_2$ in $U$, satisfying $C_1\not\subset C_2^{(\rho^a)}$, we have
$$C_1^{(\rho)}\cap C_2^{(\rho)} \subset C_0^{(\rho^b)},$$
for some coset chunk $C_0$ in $U$ with $\dim C_0 < \dim C_1$.
\end{lemma}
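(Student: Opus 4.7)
The plan is to reduce the problem to linear algebra in $\g$ by working in exponential coordinates. After a left translation, which preserves the Riemannian metric and the class of coset chunks, I may assume that $C_1$ passes through the identity, so that $C_1=U\cap\exp(O\cap\h_1)$ for some Lie subalgebra $\h_1$ and $C_2=U\cap g_0\exp(O\cap\h_2)$ for some $g_0\in U$ and some Lie subalgebra $\h_2$. Set $\h_0=\h_1\cap\h_2$; the intersection of two Lie subalgebras is again a Lie subalgebra, so $\h_0$ defines a natural family of candidate coset chunks for $C_0$.

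I would then argue a dichotomy on the pair $(\h_1,\h_2)$. If $\h_1\subset\h_2$, then $\dim\h_0=\dim\h_1$, but in this case $C_1$ lies entirely in a single left coset of $\exp(\h_2)$; the hypothesis $C_1\not\subset C_2^{(\rho^a)}$ then forces $d(C_1,C_2)>\rho^a/2$, and since $\rho^a>4\rho$ for $\rho$ small whenever $a<1$, the neighborhoods $C_1^{(\rho)}$ and $C_2^{(\rho)}$ are already disjoint and the conclusion holds vacuously. Otherwise $\h_1\not\subset\h_2$, so $\dim\h_0<\dim\h_1$, and coset chunks attached to $\h_0$ have the required strictly smaller dimension.

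For this second case I pass to the Lie algebra via $\log$. Any $x\in C_1^{(\rho)}\cap C_2^{(\rho)}$ is, up to an error of size $O(\rho)$, simultaneously of the form $\exp(X_1)$ with $X_1\in\h_1\cap O$ and of the form $\exp(\mathrm{BCH}(\log g_0,X_2))$ with $X_2\in\h_2\cap O$. Since the higher order BCH terms are uniformly Lipschitz on $U$, $\log x$ lies within distance $O(\rho)$ of both the subspace $\h_1$ and the affine subspace $\log g_0+\h_2$. A standard linear computation -- project onto an orthogonal complement of $\h_2$, use the principal angle $\theta$ between $\h_1$ and $\h_2$ modulo $\h_0$, and translate back -- then shows that $\log x$ lies within $O(\rho/\sin\theta)$ of an affine translate of $\h_0$. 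Exponentiating and absorbing the BCH error yields $x\in C_0^{(O(\rho/\sin\theta))}$ for a suitable coset chunk $C_0$ attached to $\h_0$.

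It remains to extract a quantitative lower bound on $\sin\theta$ from the hypothesis. A direct analysis of the distance from $\exp(X_1)$ to $C_2$, for $X_1\in\h_1\cap O$, shows that $C_1\not\subset C_2^{(\rho^a)}$ forces either $\sin\theta\gtrsim\rho^a$, or else the affine intersection of $\h_1$ with $\log g_0+\h_2$ lies at distance $\gtrsim\rho^a/\sin\theta$ from the identity. In the first regime, $O(\rho/\sin\theta)\lesssim\rho^{1-a}$, so selecting any $a,b>0$ with $a+b<1$ (for instance $a=b=1/3$) yields the required inclusion in $C_0^{(\rho^b)}$. In the second regime the centre of the linear intersection lies outside $U$ for $\rho$ small, so $C_1^{(\rho)}\cap C_2^{(\rho)}$ is empty within $U$ and the conclusion is again vacuous. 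The principal difficulty is precisely this bookkeeping: the angle $\theta$ between two subalgebras of $\g$ can be arbitrarily small, and it is the interplay between the small-angle regime and the large-shift regime that makes the constants $a$ and $b$ uniform in the choice of $C_1$ and $C_2$.
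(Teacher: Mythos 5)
Your reduction to the Lie algebra via exponential coordinates and the translation argument for the easy case $\h_1\subset\h_2$ are sound and parallel the paper's first step; the genuine gap is in the core case, where you take $\h_0=\h_1\cap\h_2$ as the target and bound the intersection of $\rho$-neighbourhoods by $O(\rho/\sin\theta)$ for \enquote{the} principal angle $\theta$. Once you quotient by $\h_0$, there is in general an entire spectrum of principal angles between $\h_1$ and $\h_2$. The hypothesis $C_1\not\subset C_2^{(\rho^a)}$ (equivalently $d(\h_1,\h_2)\ge\rho^a$) controls only the \emph{largest} of these, while the transverse diameter of $B(0,1)\cap\h_1^{(\rho)}\cap\h_2^{(\rho)}$ is dictated by the \emph{smallest nonzero} principal angle, which the hypothesis does not constrain from below. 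Concretely, in $\g=\R^4$ take $\h_1=\Span(e_1,e_2)$ and $\h_2=\Span(e_1+\rho^{1-\eta}e_3,\,e_2+\rho^{a}e_4)$ with $a$ small and $\eta\in(0,1)$ arbitrary. Then $d(\h_1,\h_2)\asymp\rho^a$ so the hypothesis holds, and $\h_1\cap\h_2=\{0\}$, yet $B(0,1)\cap\h_1^{(\rho)}\cap\h_2^{(\rho)}$ reaches out a distance $\asymp\rho^{\eta}$ from the origin in the $e_1$-direction; since $\eta$ may be taken arbitrarily small, no fixed $b>0$ puts this set inside a $\rho^b$-ball, so $\h_0=\h_1\cap\h_2$ cannot serve as the coset chunk $C_0$ with uniform constants.

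The paper circumvents this by not taking the literal intersection: Lemma~\ref{subspaceintersection} constructs, inside $\h_1$, a subspace $V$ by a greedy multi-scale selection -- one keeps appending orthonormal vectors $u_i\in\h_1$ as long as $d(u_i,\h_2)\le\rho^{2^{-i}}$ -- so that $V$ may be strictly larger than $\h_1\cap\h_2$ (in the example above, $V=\R e_1$, not $\{0\}$). The quantified stopping scale produces the uniform estimate $B(0,1)\cap\h_1\cap\h_2^{(\rho^{3c/4})}\subset V^{(\rho^{c/6})}$. The price is that $V$ need not be a Lie subalgebra -- it is only approximately closed under brackets -- and Lemma~\ref{algebraintersection} repairs this via {\L}ojasiewicz's inequality, deforming $V$ to a nearby genuine subalgebra $\h$ of the same dimension $\ell<\dim\h_1$. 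This pairing of a multi-scale selection with {\L}ojasiewicz is the mechanism your proposal omits, and it is precisely what makes $a$ and $b$ uniform over all admissible pairs $(C_1,C_2)$.
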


The proof goes into three steps. First, we study intersections of linear subspaces in a Euclidean space, then we consider intersections of subalgebras of a Lie algebra, and finally, we prove Lemma~\ref{cosetintersection}.

\begin{definition}
Given two subspaces $V_1$ and $V_2$ of a Euclidean space $E$, we define the distance from $V_1$ to $V_2$ by
$$d(V_1,V_2) = \sup\{d(v, V_2) \,;\, v\ \mbox{unit vector in}\ V_1\}.$$
\end{definition}

Note that $d$ does not define a distance on the set of subspaces of $E$, as $d(V_1,V_2)=0$ just means that $V_1$ is included in $V_2$.

\begin{lemma}\label{subspaceintersection}
Let $d$ be a positive integer. There exists a constant $c_0=c_0(d)>0$ such that if $E$ is a Euclidean space of dimension $d$, the following holds for any $\rho>0$ small enough.\\
Suppose $V_1$ and $V_2$ are two proper subspaces of $E$ such that $d(V_1,V_2)\geq\rho^{c_0}$.
Then there exists a nonnegative integer $\ell<\dim V_1$, a constant $c\geq c_0$, and an orthogonal family $(u_i)_{1\leq i\leq \ell}$ of unit vectors such that,
\begin{equation}\label{uiinv}
\forall i,\ u_i\in V_1\cap V_2^{(\rho^c)}
\end{equation}
and
$$B_E(0,1)\cap V_1\cap V_2^{(\rho^{\frac{3c}{4}})} \subset V^{(\rho^{\frac{c}{6}})},$$
where $V=\Span(u_i)_{1\leq i\leq \ell}$.
\end{lemma}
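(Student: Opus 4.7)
The plan is to analyse the orthogonal projection $\pi : V_1 \to V_2^{\perp}$ via its singular value decomposition, and then apply a pigeonhole argument on the singular values.

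Let $\pi$ denote the restriction to $V_1$ of the orthogonal projection $E \to V_2^\perp$. For $v \in V_1$ one has $d(v, V_2) = \|\pi(v)\|$, so the operator norm of $\pi$ equals $d(V_1, V_2) \geq \rho^{c_0}$. Writing $k = \dim V_1$, pick an orthonormal basis $(u'_1, \dots, u'_k)$ of $V_1$ diagonalising $\pi^*\pi$, with singular values $0 \leq \sigma_1 \leq \dots \leq \sigma_k$. Taking $u_i = u'_i$ for $i \leq \ell$, condition~(\ref{uiinv}) reduces to the inequality $\sigma_\ell \leq \rho^c$. For the second inclusion, writing $v = \sum_i \alpha_i u'_i$ with $\|v\|^2 = \sum \alpha_i^2 \leq 1$ and $d(v, V_2)^2 = \sum \alpha_i^2 \sigma_i^2 \leq \rho^{3c/2}$, the distance $d(v, V)^2 = \sum_{i > \ell} \alpha_i^2$ is bounded by $\rho^{c/3}$ provided $\sigma_{\ell+1} \geq \rho^{(7/12)c}$, since then $\sigma_i^2 \geq \rho^{(7/6)c}$ for every $i > \ell$.

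The problem thus reduces to finding an index $\ell < k$ and an exponent $c \geq c_0$ such that $\sigma_\ell \leq \rho^c$ and $\sigma_{\ell+1} \geq \rho^{(7/12)c}$, that is, a multiplicative gap by a factor $12/7$ in the \emph{exponent} between two consecutive singular values. To produce such a gap, fix $c_0 = c_0(d) > 0$ (any positive constant will do, for instance $c_0 = 1$) and define the thresholds $t_j = \rho^{(12/7)^j c_0}$ for $j = 0, 1, \dots, d$; these are strictly decreasing in $j$. Since $V_1$ is proper, $k \leq d-1$, so by pigeonhole at least one of the $d$ intervals $(t_m, t_{m-1}]$ (for $m \in \{1, \dots, d\}$) contains no singular value of $\pi$. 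Pick such an $m$ and set $c = (12/7)^m c_0$ and $\ell = |\{i : \sigma_i \leq t_m\}|$. Then $c \geq c_0$ automatically, while $\sigma_k \geq \rho^{c_0} = t_0 > t_m$ forces $\ell < k$, so that $(u_1, \dots, u_\ell)$ and $c$ have the desired properties.

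The only delicate point is the calibration of the pigeonhole: the factor $12/7$ between consecutive threshold exponents is dictated by the target exponents $3c/4$ and $c/6$ appearing in the conclusion. The hypothesis $V_1 \neq E$, i.e.\ $k \leq d - 1 < d$, is precisely what guarantees the existence of an empty interval among the $d$ intervals $(t_m, t_{m-1}]$.
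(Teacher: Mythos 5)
Your proof is correct and uses a genuinely different argument from the paper. The paper chooses $\ell$ greedily, as the maximal size of an orthonormal family $(u_i)$ in $V_1$ satisfying $d(u_i,V_2)\leq\rho^{2^{-\ell}}$, and then exploits this maximality: if $v\in V_1$ were close to $V_2$ but its component $v'$ orthogonal to the $u_i$'s were not tiny, the normalization $v'/\|v'\|$ could be appended, contradicting the choice of $\ell$. You instead diagonalize the restricted orthogonal projection $\pi\colon V_1\to V_2^\perp$, reduce everything cleanly to locating a multiplicative gap of $12/7$ in the exponents of consecutive singular values, and produce that gap by a pigeonhole on $d$ disjoint dyadic (base $12/7$) windows, using $\dim V_1\leq d-1$. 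The SVD reformulation makes the two required inequalities $\sigma_\ell\leq\rho^c$ and $\sigma_{\ell+1}\geq\rho^{(7/12)c}$ completely explicit and avoids the paper's chain of fudge-factor inequalities ($\rho^{3c/4}+\ell\rho^c\leq\rho^{2c/3}$, etc.), which is an honest simplification.

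One small caveat: the choice $c_0=1$ is legitimate for the lemma as stated (no upper bound on $c$ is imposed there), but it yields $c=(12/7)^m\in[12/7,(12/7)^d]$, all exceeding $1$. The way this lemma is actually invoked in Lemma~\ref{algebraintersection}, one passes from $\h_1^{(\rho)}\cap\h_2^{(\rho)}$ to $\h_1\cap\h_2^{(\rho^{3c/4})}$, which requires $\rho\lesssim\rho^{3c/4}$, i.e.\ roughly $c\leq 4/3$. The paper's greedy construction always outputs $c\leq 1$. With your argument you should instead take, say, $c_0=(7/12)^d$, so that $c=(12/7)^m c_0\leq 1$ for all $m\in\{1,\dots,d\}$; as you note yourself, any positive constant works for the lemma, so this is a one-line fix rather than a gap.
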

\begin{proof}
We will prove the lemma with constant $c_0=2^{-d-1}$.\\
Let $\ell\geq 0$ be maximal such that there exists an orthonormal family $(u_i)_{1\leq i\leq \ell}$ of vectors in $V_1$ such that
$$\forall i\in\{1,\dots,\ell\},\quad d(u_i,V_2)\leq\rho^{2^{-\ell}}.$$
The assumption $d(V_1,V_2)\geq\rho^{c_0}$ ensures that $\ell<\dim V_1$.
Choosing $c=2^{-\ell}$, the $u_i$'s certainly satisfy condition~(\ref{uiinv}).\\
Now let $v$ be a vector in $B_E(0,1)\cap V_1\cap V_2^{(\rho^{\frac{3c}{4}})}$.
Write $v=\lambda_1 u_1+\dots+\lambda_\ell u_\ell+ v'$, with $v'$ in $V_1\cap (u_1,\dots,u_\ell)^\perp$.
We have, provided $\rho$ is small enough,
$$d(v',V_2)=d(v-\sum\lambda_i u_i,V_2) \leq \rho^{\frac{3c}{4}}+\ell\rho^{c} \leq \rho^{\frac{2c}{3}},$$
and therefore, by maximality of $\ell$,
$$\rho^{\frac{2c}{3}} \geq \|v'\| d(\frac{v'}{\|v'\|},V_2) \geq \|v'\|\rho^{\frac{c}{2}}$$
which implies
$$d(v,V_2) = \|v'\| \leq \rho^{\frac{c}{6}}.$$
This shows that $B_E(0,1)\cap V_1\cap V_2^{(\rho^{\frac{3c}{4}})} \subset V^{(\rho^{\frac{c}{6}})}$.
\end{proof}

The next step, passing from linear subspaces to Lie subalgebras, is an application of \L ojasiewicz's inequality.

\begin{lemma}\label{algebraintersection}
Let $\g$ be a real Lie algebra endowed with a Euclidean metric.
There exist positive constants $a$ and $b$
such that for all $\rho>0$ small enough,
we have the following.
Let $\h_1$ and $\h_2$ be two Lie subalgebras of $\g$, and assume that $d(\h_1,\h_2)\geq\rho^a$. Then, there exists a Lie subalgebra $\h$ such that $\dim \h<\dim \h_1$ and
$$B_{\g}(0,1)\cap \h_1^{(\rho)} \cap \h_2^{(\rho)} \subset \h^{(\rho^b)}.$$
\end{lemma}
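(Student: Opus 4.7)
The plan is to reduce the problem to Lemma~\ref{subspaceintersection} applied to the underlying linear subspaces of $\h_1$ and $\h_2$, and then to upgrade the resulting linear subspace $V$ to a genuine Lie subalgebra $\h$ via \L{}ojasiewicz's inequality on the variety of Lie subalgebras of $\g$.

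Choose $a=c_0$, the constant of Lemma~\ref{subspaceintersection}. Since $d(\h_1,\h_2)\geq\rho^{c_0}$, that lemma produces an integer $\ell<\dim\h_1$, a constant $c\geq c_0$, and orthonormal vectors $u_1,\dots,u_\ell\in\h_1\cap\h_2^{(\rho^c)}$ spanning an $\ell$-dimensional subspace $V\subset\h_1$ with
$$B_\g(0,1)\cap \h_1\cap\h_2^{(\rho^{3c/4})}\subset V^{(\rho^{c/6})}.$$
For any $x\in B_\g(0,1)\cap\h_1^{(\rho)}\cap\h_2^{(\rho)}$, decomposing $x=h+e$ with $h\in\h_1$, $\|e\|\leq\rho$, yields $h\in\h_1\cap\h_2^{(2\rho)}$ with $\|h\|\leq 2$; since $2\rho\leq\rho^{3c/4}$ for $\rho$ small, rescaling $h$ by $\tfrac{1}{2}$ and applying the displayed inclusion shows $B_\g(0,1)\cap\h_1^{(\rho)}\cap\h_2^{(\rho)}\subset V^{(C_1\rho^{c/6})}$ for some constant $C_1$.

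Next I show $V$ is close to being a Lie subalgebra. Since the Lie bracket is bilinear and hence Lipschitz on bounded sets, decomposing $u_i=h_i+e_i$ with $h_i\in\h_2$ and $\|e_i\|\leq\rho^c$ gives $[u_i,u_j]\in\h_1$ and $\|[u_i,u_j]-[h_i,h_j]\|\leq M\rho^c$ for some constant $M$; as $[h_i,h_j]\in\h_2$, this puts $[u_i,u_j]\in\h_1\cap\h_2^{(M\rho^c)}$. Rescaling $[u_i,u_j]$ into $B_\g(0,1)$ and reapplying the displayed inclusion produces $[u_i,u_j]\in V^{(M'\rho^{c/6})}$ for some $M'$. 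Consequently, defining the continuous semi-algebraic function $F:\mathrm{Gr}_\ell(\g)\to\R$ by the basis-independent Frobenius norm
$$F(W)^2=\sum_{i,j=1}^{\ell}\|\pi_{W^\perp}([w_i,w_j])\|^2,$$
where $(w_i)$ is any orthonormal basis of $W$, we have $F(V)\leq\ell M'\rho^{c/6}$.

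Finally, \L{}ojasiewicz's inequality applies to $F$ on the compact real algebraic variety $\mathrm{Gr}_\ell(\g)$: its zero set $\mathcal{S}_\ell$ is exactly the set of $\ell$-dimensional Lie subalgebras of $\g$. If $\mathcal{S}_\ell\neq\emptyset$, there exist constants $C,\alpha>0$ depending only on $\g$ with $d_{\mathrm{Gr}}(W,\mathcal{S}_\ell)\leq CF(W)^\alpha$ for all $W$; the alternative $\mathcal{S}_\ell=\emptyset$ is excluded for small $\rho$, since $F$ would then be bounded below by a positive constant on the compact Grassmannian, contradicting $F(V)\leq\ell M'\rho^{c/6}$. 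We therefore obtain an $\ell$-dimensional Lie subalgebra $\h$ with $d_{\mathrm{Gr}}(V,\h)\leq C(\ell M'\rho^{c/6})^\alpha$, and hence $V^{(C_1\rho^{c/6})}\cap B_\g(0,1)\subset\h^{(C_2\rho^{\alpha c/6})}$ for some $C_2$. Combining with the earlier inclusion and choosing $b<\min(c/6,\alpha c/6)/2$ absorbs all constants for $\rho$ small, concluding the proof. I expect the main obstacle to be the verification that the semi-algebraic framework for \L{}ojasiewicz's inequality can be set up uniformly on $\mathrm{Gr}_\ell(\g)$ with an exponent $\alpha>0$ depending only on $\g$; this ultimately rests on the polynomial nature of the bracket and the algebraicity of the Grassmannian.
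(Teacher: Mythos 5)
Your proof is correct and follows essentially the same strategy as the paper: apply Lemma~\ref{subspaceintersection} to find the linear space $V$, observe that the Lie brackets $[u_i,u_j]$ land in $\h_1\cap\h_2^{(\cdot)}$ and hence back near $V$, and then invoke \L{}ojasiewicz to approximate $V$ by an actual Lie subalgebra. The only difference is cosmetic: you set up \L{}ojasiewicz on the Grassmannian with a basis-independent Frobenius-norm functional $F$, whereas the paper works on the variety $M_\ell$ of orthonormal $\ell$-tuples with the tuple-dependent function $f$; these are equivalent (indeed $F^2$ is just $f$ descended to $\mathrm{Gr}_\ell$), and in both cases one should record the final exponent $b$ in terms of $c_0$ and the worst \L{}ojasiewicz exponent over $\ell\in\{1,\dots,d-1\}$, as $c$ and $\alpha$ a priori depend on $\ell$.
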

\begin{proof}
For each $\ell$ in $\{1,\dots,d\}$, the variety $M_\ell$ of orthogonal $\ell$-tuples of unit vectors in $\g$ is compact and real analytic. We define a real-valued function $f$ on $M_\ell$ by
$$f(u_1,\dots,u_\ell) = \sum_{1\leq i<j\leq \ell} d([u_i,u_j], \Span(u_i)_{1\leq i\leq \ell})^2,$$
where $[,]$ denotes the Lie bracket in $\g$.\\
Note that $f(u_1,\dots,u_\ell)=0$ if and only if $\Span(u_i)$ is stable under Lie brackets, i.e. if and only if the $\ell$-tuple $(u_i)$ is the basis of a Lie subalgebra of $\g$.
The function $f$ is real-analytic so that by the \L ojasiewicz inequality \cite[Théorème 2, page 62]{lojasiewicz}, there exists a constant $C$ such that for $r$ small enough,
$$f(u_1,\dots,u_\ell)\leq r \quad\Longrightarrow\quad d((u_i),Z_f)\leq r^{\frac{1}{C}},$$
where $Z_f$ is the zero set of $f$.
In other terms, if $f(u_1,\dots,u_\ell)\leq r$, then there exists a Lie subalgebra $\h$ of dimension $\ell$ such that $d(\Span(u_i),\h)\leq r^{\frac{1}{C}}$.\\
Let $c_0$ be the constant from Lemma~\ref{subspaceintersection}, and let $a=c_0$.\\
Now suppose $\h_1$ and $\h_2$ are two subalgebras as in the lemma. Choose $\ell< \dim \h_1$, $c\geq c_0$ and a orthonormal family $(u_i)_{1\leq i\leq \ell}$ as given by Lemma~\ref{subspaceintersection}.
For each $i<j$, we have $u_i$ and $u_j$ are in $\h_1$, so that $[u_i,u_j]\in\h_1$. Moreover, $u_i$ and $u_j$ are in $\h_2^{(\rho^c)}$ so, for some constant $L$ depending only on $\g$, we have $[u_i,u_j]\in\h_2^{(L\rho^c)}$. 
Thus, for $\rho>0$ small enough, $[u_i,u_j] \in \h_1\cap\h_2^{(\rho^{\frac{3c}{4}})} \subset \left(\Span(u_i)\right)^{(\rho^{\frac{c}{6}})}$,
and
$$f(u_1,\dots,u_\ell)\leq d^2\rho^{\frac{c}{6}} \leq \rho^{\frac{c}{7}}.$$
Therefore, there exists a Lie subalgebra $\h$ of dimension $\ell$ such that $d(\Span(u_i),\h)\leq\rho^{\frac{c}{7C}}$.
However, by definition of the $u_i$'s, the intersection $B_{\g}(0,1)\cap \h_1^{(\rho)}\cap\h_2^{(\rho)}$ is included in $(\Span(u_i))^{(\rho+\rho^{\frac{c}{6}})}$, so that, setting $b=\frac{c_0}{8C}$, we indeed get, provided $\rho$ is small enough,
$$\h_1^{(\rho)}\cap\h_2^{(\rho)} \subset \h^{(\rho^b)}.$$ 
\end{proof}

The above Lemma~\ref{algebraintersection} can of course be reformulated in terms of subgroup chunks, and thus allows to prove Lemma~\ref{cosetintersection}.

\begin{proof}[Proof of Lemma~\ref{cosetintersection}]
Suppose without loss of generality that the intersection $g_1H_1^{(\rho)}\cap g_2H_2^{(\rho)}$ is nonempty, and fix $g_0\in g_1H_1^{(\rho)}\cap g_2H_2^{(\rho)}$. Then we have, for some constant $L$ depending only on the compact neighborhood $U$,
$$g_0H_1^{(L\rho)}\supset g_1H_1^{(\rho)} \quad\mbox{and}\quad g_0H_2^{(L\rho)}\supset g_2H_2^{(\rho)}.$$
All we have to show is that $g_0H_1^{(L\rho)}\cap g_0H_2^{(L\rho)}$ is included in the $\rho^{b}$-neighborhood of some coset chunk.
From $g_1H_1\not\subset g_2H_2^{(\rho^a)}$, we have $g_0H_1\not\subset g_0H_2^{(\frac{\rho^a}{2})}$ whence $H_1\not\subset H_2^{(\rho^{\frac{\rho^a}{2}})}$.
Therefore, by Lemma~\ref{algebraintersection} -- adjusting slightly the values of $a$ and $b$ --, there exists a subgroup chunk $H_0$ of dimension less than $\dim H_1$ such that
$$H_1^{(L\rho)}\cap H_2^{(L\rho)} \subset H_0^{(\rho^b)},$$
and this allows us to concude that
$$g_1H_1^{(\rho)}\cap g_2H_2^{(\rho)} \subset g_0H_0^{(\rho^b)}.$$
\end{proof}

\section{The set $\Xi$ of troublemakers}
\label{section:troublemakers}

Let $G$ be a Lie group and $U$ a compact neighborhood of the identity.
If $A\subset U$ is a $(\sigma,\epsilon)$-set at scale $\delta$ in a Lie group $G$, we associate to it the set $\Xi$ of \emph{troublemakers} for $A$, defined as
\begin{equation}\label{tm}
\Xi = \left\{ \xi\in U \,\left|\, \exists \Omega\subset A\times A \ \mbox{with}
\begin{array}{l}
N(\Omega,\delta)\geq\delta^{\epsilon}N(A,\delta)^2 \\
\mbox{and}\ N(\pi_\xi(\Omega),\delta)\leq\delta^{-\epsilon}N(A,\delta)
\end{array}
\right.
\right\},
\end{equation}
where $\pi_\xi:(x,y)\mapsto x\xi y$.
Roughly speaking, an element $\xi$ is a troublemaker for the set $A$ if there exist large portions $A'$ and $B'$ of $A$ such that the product set $A'\xi B'$ is not much larger than $A$.

\begin{example}
Suppose $A=H\cap U$ for some closed subgroup of the Lie group $G$. If $\xi\in U$ is any element of the normalizer $N_G(H)$ of $H$, we have $A\xi A \subset H\xi\cap U^3$, which has roughly the same size as $A$. So $\Xi$ contains $N_G(H)\cap U$.
\end{example}

\subsection{Controlling the troublemakers}

The purpose of this subsection is to show that if $A$ is a $(\sigma,\epsilon)$-set in a simple Lie group $G$, then the set of troublemakers for $A$ is included in a small number of neighborhoods of cosets of proper closed subgroups of $G$.
Our aim is the following.

\begin{proposition}\label{troublemakers}
Let $G$ be a simple Lie group. There exists a neighborhood $U$ of the identity in $G$ such that, given $\sigma\in (0,\dim G)$, there exist constants $\eta=\eta(\sigma)>0$ and $\epsilon_1=\epsilon_1(\sigma)>0$ such that the following holds for any $\epsilon\in(0,\epsilon_1)$ and any $\delta>0$ small enough.\\
If $A\subset U$ is a $(\sigma,\epsilon)$-set at scale $\delta$, then the set $\Xi$ of troublemakers for $A$, defined as in (\ref{tm}), is included in a union of at most $\delta^{-O(\epsilon)}$ neighborhoods of size $\delta^{\eta}$ of coset chunks in $U$.\\
Moreover, $\eta$ and $\epsilon_1$ remain bounded away from $0$ when $\sigma$ varies in a compact subset of $(0,\dim G)$.
\end{proposition}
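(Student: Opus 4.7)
The plan is, for each troublemaker $\xi$, to convert the defining inequality into an energy statement, upgrade that to an approximate subgroup via a discretized noncommutative Balog--Szemer\'edi--Gowers argument, locate this approximate subgroup inside a subgroup chunk via Lemma~\ref{niceh}, and then use Lemma~\ref{cosetintersection} to pin $\xi$ inside a small neighborhood of a coset of a proper closed subgroup. A greedy covering of $\Xi$ by such cosets, again via Lemma~\ref{cosetintersection}, will bound their number.

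Fix $\xi\in\Xi$ with witness $\Omega\subset A\times A$. Dyadic pigeonholing on the fibers of $\pi_\xi$ reduces, up to $\delta^{O(\epsilon)}$ losses, to the case where on a subset $\Omega'$ of size $\geq\delta^{O(\epsilon)}N(A,\delta)^2$ every fiber of $\pi_\xi$ has roughly the same size $t\geq\delta^{O(\epsilon)}N(A,\delta)$. Each coincidence $x_1\xi y_1=x_2\xi y_2$ with $(x_i,y_i)\in\Omega'$ rewrites as $\xi^{-1}(x_2^{-1}x_1)\xi=y_2y_1^{-1}$, giving at least $\delta^{O(\epsilon)}N(A,\delta)^3$ such identities. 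Statistically, this says that conjugation by $\xi$ carries a dense part of $A^{-1}A$ onto a dense part of $AA^{-1}$, which is the noncommutative analogue of the large multiplicative energy driving the classical Balog--Szemer\'edi--Gowers theorem.

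Invoking a discretized noncommutative Balog--Szemer\'edi--Gowers argument (in the style of Tao's noncommutative sumset estimates), this energy lower bound produces a symmetric set $\widetilde H$ sitting inside a bounded product of copies of $A$ and $A^{-1}$ that, up to $\delta^{O(\epsilon)}$ losses, is a $\delta^{-O(\epsilon)}$-approximate subgroup, is a $(\sigma',O(\epsilon))$-set at scale $\delta$ for some $\sigma'\in(0,\dim G)$, and satisfies $N(\xi\widetilde H\xi^{-1}\cap\widetilde H',\delta)\geq\delta^{O(\epsilon)}N(\widetilde H,\delta)$ for a companion set $\widetilde H'$ sharing the same structural properties. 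Lemma~\ref{niceh} applied to $\widetilde H$ and $\widetilde H'$ then provides proper subgroup chunks $H_1,H_2$ in $U^4$ with $\widetilde H\subset H_1^{(\delta^{\tau_\ell})}$ and $\widetilde H'\subset H_2^{(\delta^{\tau_\ell})}$ (after covering by $\delta^{-O(\epsilon)}$ translates). The large intersection of $\xi\widetilde H\xi^{-1}$ with $\widetilde H'$, combined with Lemma~\ref{cosetintersection} applied to the coset chunks $\xi H_1\xi^{-1}$ and $H_2$, forces these two chunks to lie within a $\delta^a$-neighborhood of each other; otherwise the intersection would collapse to a strictly lower-dimensional coset chunk, contradicting the quantitative size just obtained. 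Hence $\xi$ sits in a $\delta^\eta$-neighborhood of the conjugacy transporter $\{g\in U:gH_1g^{-1}=H_2\}$, which when nonempty is a left coset of $N_G(H_1)$, a proper closed subgroup since $G$ is simple and $H_1$ is proper.

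To cover $\Xi$ by $\delta^{-O(\epsilon)}$ neighborhoods of coset chunks, I run a greedy selection: pick $\xi_1\in\Xi$, remove the $\delta^\eta$-neighborhood of its associated coset chunk $C_1$, and iterate. Two selected chunks whose underlying subgroup chunks lie further than $\delta^a$ apart in the Grassmannian contribute disjoint $\delta^\eta$-portions to $\Xi$ by Lemma~\ref{cosetintersection}; because the witnessing approximate subgroups $\widetilde H$ all sit in bounded products of the single $(\sigma,\epsilon)$-set $A$ and each carries at least $\delta^{O(\epsilon)}N(A,\delta)$ of its mass, an entropy count using the $(\sigma,\epsilon)$-set property of $A$ limits the number of distinct subgroup-chunk types to $\delta^{-O(\epsilon)}$, so the greedy algorithm terminates in that many steps. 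The main obstacle will be the discretized noncommutative Balog--Szemer\'edi--Gowers step, where the $(\sigma',O(\epsilon))$-set property must be preserved through the Pl\"unnecke--Ruzsa estimates without accumulating more than $\delta^{-O(\epsilon)}$ multiplicative losses; a secondary difficulty is the final covering argument, where the subgroup chunks appearing for different $\xi$'s must be shown not to proliferate beyond $\delta^{-O(\epsilon)}$ types.
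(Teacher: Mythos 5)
Your overall architecture matches the paper's: apply a discretized noncommutative Balog--Szemer\'edi--Gowers argument to produce an approximate subgroup $\widetilde{H}$, locate it inside a subgroup chunk via Lemma~\ref{niceh}, use the almost-stabilizer statement (Corollary~\ref{almoststab} in the paper) to force $\xi$ into a small neighborhood of a coset of a proper closed subgroup, and then bound the number of such cosets. The intermediate steps you describe for arriving at the conjugation constraint are essentially sound, though the paper works more directly: Tao's Theorem~6.10 gives a \emph{single} approximate subgroup $\widetilde{H}$ with $x\widetilde{H}\cap A$ and $\xi^{-1}\widetilde{H}y\cap A$ both large, from which the constraint $\xi^{-1}H_i\xi\subset R_j^{(\delta^{a\tau})}$ falls out after passing to subgroup chunks.

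The genuine gap is in the final counting step, and you rightly flagged it. Your ``greedy selection'' plus ``entropy count'' is a correct intuition but not yet an argument: nothing in your sketch prevents a large nested or highly overlapping family of proper subgroup chunks, each carrying $\gtrsim\delta^{O(\epsilon)}N(A,\delta)$ of $A$. The paper's resolution is Lemma~\ref{fewcosets}, an inclusion--exclusion bound: one shows that any family of coset chunks $C_i$, each capturing $\geq\delta^{K\epsilon}N(A,\delta)$ mass of $A$, and such that \emph{pairwise intersections collapse to lower-dimensional chunks that capture only $\leq\delta^{4K\epsilon}N(A,\delta)$}, has cardinality $\leq 2\delta^{-K\epsilon}$. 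The pairwise-intersection hypothesis follows from Lemma~\ref{cosetintersection}, but the ``lower-dimensional chunks capture little'' hypothesis is exactly item~2 of Lemma~\ref{niceh} -- the non-concentration afforded by choosing the controlling subgroup chunk of \emph{minimal} dimension. Your sketch invokes Lemma~\ref{niceh} but does not carry its item~2 into the counting argument, which is why the entropy count remains unproven. Moreover, the paper sidesteps the per-$\xi$ greedy entirely: the admissible coset chunks form a family $\mathcal{C}$ depending only on $A$ (and fixed scale/dimension data), covered once by $\leq 2\delta^{-K'\epsilon}$ coset chunks via Lemma~\ref{fewcosets}; as $\xi$ varies, both translates $x'\widetilde{H}'$ and $\xi^{-1}\widetilde{H}'y'$ fall into this fixed family, so the coset chunks $C_{i,j}$ constraining $\xi$ range over $\leq\delta^{-O(\epsilon)}$ pairs. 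Without this uniformity you would, in addition to the overlap issue, still need to justify why the ``types'' across different $\xi$'s do not multiply.
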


First, we recall the following proposition on \enquote{almost stabilizers} of subspaces in the adjoint representation of a simple Lie group \cite[Proposition~2.7]{saxceproducttheorem}.

\begin{proposition}\label{rhoaway}
Let $G$ be a simple Lie group with trivial center.
There exists a neighborhood $U$ of the identity in $G$, and a constant $c>0$ such that for all $\rho>0$ small enough, the following holds.\\
For each proper subspace $V<\g$, there exists a proper closed connected subgroup $S_V$ such that for all $\xi$ in $U$,
$$d((Ad\xi)V,V)\leq\rho \quad\Longrightarrow\quad d(\xi,S_V)\leq \rho^c.$$
\end{proposition}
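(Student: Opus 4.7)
The plan is to realise the condition $d((\Ad\xi)V,V)\leq\rho$ as approximate vanishing of a real-analytic function on $G$, and then conclude via a parametric \L ojasiewicz inequality, in the spirit of the proof of Lemma~\ref{algebraintersection}. For the candidate subgroup, I take
$$H_V = \{g\in G \,|\, (\Ad g)V=V\}$$
to be the set-wise stabilizer of $V$ under the adjoint action, and set $S_V=H_V^0$, the identity component. Since $G$ is simple with trivial center, $\Ad$ is faithful and $\g$ admits no proper nonzero $\Ad(G)$-invariant subspace, so $H_V$ is a proper closed subgroup for every proper nonzero $V$. A compactness argument (using continuity of $V\mapsto H_V$ into the Chabauty space of closed subgroups of $G$, on the compact Grassmannian) allows one to shrink $U$ once and for all so that $\exp$ is a diffeomorphism onto $U$ and $H_V\cap U\subset S_V$ for every such $V$; the target inequality then reduces to showing $d(\xi,H_V\cap\overline U)\leq\rho^c$.

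For each $k\in\{1,\ldots,\dim\g-1\}$, the Grassmannian $\mathrm{Gr}_k(\g)$ is a compact real algebraic variety, and using the distance on subspaces introduced before Lemma~\ref{subspaceintersection}, the map
$$F_k:\overline U\times\mathrm{Gr}_k(\g)\to[0,+\infty),\qquad F_k(g,V)=d((\Ad g)V,V)^2$$
is real analytic, with fibrewise zero set $\{g:F_k(g,V)=0\}=H_V\cap\overline U$. Since $\Ad(G)$ acts algebraically on the Grassmannian, the collection $\{F_k(\cdot,V)\}_{V\in\mathrm{Gr}_k(\g)}$ is a semi-algebraic family of real-analytic functions on the compact set $\overline U$, and a parametric \L ojasiewicz inequality yields constants $A_k,c_k>0$, depending only on $G$, $U$ and $k$, such that
$$d(g,H_V\cap\overline U)\leq A_k\, F_k(g,V)^{c_k}$$
for every $V\in\mathrm{Gr}_k(\g)$ and $g\in\overline U$. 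Taking $c=\tfrac12\min_k c_k$ and $\rho$ small enough to absorb the constants $A_k$, the hypothesis $d((\Ad\xi)V,V)\leq\rho$ yields $d(\xi,H_V\cap\overline U)\leq\rho^c$, hence $d(\xi,S_V)\leq\rho^c$.

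The main technical obstacle is the uniformity of the \L ojasiewicz exponent $c_k$ across all $V\in\mathrm{Gr}_k(\g)$: a naïve application would give an exponent $c_V$ that might degenerate along the special loci where $\dim H_V$ jumps. The resolution relies crucially on the essentially semi-algebraic (indeed algebraic) nature of the setup: one can invoke Hörmander's semi-algebraic \L ojasiewicz inequality directly, or equivalently stratify $\mathrm{Gr}_k(\g)$ by $\Ad(G)$-orbit type — of which there are only finitely many — and apply the classical \L ojasiewicz inequality of \cite{lojasiewicz} on each stratum before taking the worst exponent. The final constant $c$ in the statement is then obtained by minimising over the finitely many dimensions $k\in\{1,\ldots,\dim\g-1\}$.
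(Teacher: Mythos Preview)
Note first that the present paper does not itself prove this proposition; it is quoted from \cite[Proposition~2.7]{saxceproducttheorem}, so there is no in-paper argument to compare against and I assess your proof on its own.

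Your argument has a genuine gap at the parametric \L ojasiewicz step. The fibrewise inequality
\[
d\bigl(g,\,H_V\cap\overline U\bigr)\ \le\ A_k\,F_k(g,V)^{c_k}\qquad\text{uniformly over }V\in\mathrm{Gr}_k(\g)
\]
is \emph{not} a consequence of the standard (even semialgebraic) \L ojasiewicz inequality, and can fail for polynomial families: take $F(x,p)=p\,x^2$ on $[-1,1]\times[0,1]$, where $Z_p=\{0\}$ for $p>0$ but no uniform $C,\alpha$ satisfy $|x|\le C(px^2)^\alpha$ as $p\to 0$. Neither of your proposed fixes closes this. H\"ormander's inequality controls $d\bigl((g,V),\{F_k=0\}\bigr)$, which allows $V$ to move and is strictly weaker than the fibre distance $d(g,H_V)$. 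And for noncompact $G$ there is no reason for only finitely many $\Ad(G)$-orbit types on $\mathrm{Gr}_k(\g)$; even when a finite stratification exists, applying \L ojasiewicz stratum by stratum still gives no fibrewise control at the boundary between strata, which is exactly where stabiliser dimensions jump.

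The difficulty is not merely formal here. In $G=\mathrm{PSL}_2(\R)$, set $V_t=\R(e+th)$ for $t>0$ small, so that $H_{V_t}^0=T_t$ is the one-dimensional torus with Lie algebra $\R(e+th)$, while at $t=0$ the stabiliser jumps to $H_{V_0}^0=B$. For $\xi=\exp(\beta h)$ with $\beta\neq 0$ fixed in $U$ one computes $d\bigl((\Ad\xi)V_t,V_t\bigr)\asymp t$ whereas $d(\xi,T_t)\asymp|\beta|$; letting $t\to 0$ rules out any uniform exponent with your choice $S_V=H_V^0$. The proposition survives precisely because $S_V$ is allowed to be \emph{strictly larger} than $H_V^0$ and to depend on $\rho$ (in the example, one should switch to $S_{V_t}=B$ once $t$ drops below a power of $\rho$); your argument fixes $S_V=H_V^0$ from the outset and so cannot exploit this freedom. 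Two smaller points: the function $F_k$ as you define it, via a supremum over unit vectors, is semialgebraic but in general not real-analytic (eigenvalue crossings); and $V\mapsto H_V$ is not Chabauty-continuous since stabilisers jump upward, though the conclusion $H_V\cap U\subset H_V^0$ you want is correct and follows directly from the matrix logarithm on $GL(\g)$.
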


That proposition has the following corollary.

\begin{corollary}\label{almoststab}
Let $G$ be a simple Lie group.
There exists a neighborhood $U$ of the identity and a constant $c>0$ such that the following holds for any $\rho>0$ sufficiently small.\\
For any two proper subgroup chunks $H$ and $R$ of same dimension in $U$, there exists a coset chunk $C$ in $U$ such that, for all $\xi$ in $U$,
$$\xi H\xi^{-1} \subset R^{(\rho)} \quad\Longrightarrow\quad \xi\in C^{(\rho^c)}.$$
\end{corollary}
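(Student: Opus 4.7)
My plan is to reduce the corollary to a direct application of Proposition~\ref{rhoaway} by fixing one reference element to turn the hypothesis $\xi H \xi^{-1} \subset R^{(\rho)}$ into an \emph{almost-normalizing} condition on $\h = \Lie H$.

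First, if no $\xi \in U$ satisfies the hypothesis then the conclusion is vacuous and I can pick $C$ arbitrarily, so I fix one such reference element $\xi_0$. The next step is a short linear-algebra observation: since $H$ and $R$ have the same dimension, the one-sided containment $\xi_0 H \xi_0^{-1} \subset R^{(\rho)}$ forces the Lie subalgebras $\Ad(\xi_0)\h$ and $\Lie R$ to be $O(\rho)$-close \emph{in both directions}. The reason is that for two subspaces of equal dimension in a Euclidean space, one-sided closeness is equivalent to two-sided closeness up to dimensional constants, because the orthogonal projection between them is close to an isometry. Running the same argument for any other $\xi \in U$ with $\xi H \xi^{-1} \subset R^{(\rho)}$ and composing, I obtain $d(\Ad(\xi_0^{-1}\xi)\h, \h) = O(\rho)$.

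Third, I apply Proposition~\ref{rhoaway} with $V = \h$. Since that proposition is stated for simple groups with trivial center while $G$ here is only assumed simple, I invoke it in the adjoint group $G/Z(G)$; the adjoint representation and the local Riemannian distances descend through the central quotient with no loss, so this is harmless once $U$ has been chosen small enough. This produces a proper closed connected subgroup $S_\h < G$ and a constant $c>0$ (depending only on $G$) such that $d(\xi_0^{-1}\xi, S_\h) \leq \rho^{c}$, and hence $d(\xi, \xi_0 S_\h) = O(\rho^{c})$. I then take $C = U \cap \xi_0 \exp(O \cap \Lie S_\h)$, which is a genuine coset chunk in $U$ because $\xi_0 \in U$ and $S_\h$ is a proper connected subgroup; for $\rho$ small this absorbs the $O(\rho^{c})$-neighborhood of $\xi_0 S_\h \cap U$, giving $\xi \in C^{(\rho^{c'})}$ for any $c' < c$.

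The main obstacle I expect is the symmetrization in the second step, that is, going cleanly from the one-sided set-level containment $\xi_0 H \xi_0^{-1} \subset R^{(\rho)}$ to a two-sided $O(\rho)$-closeness of Lie algebras through the exponential map; this has to be done carefully because the initial hypothesis is only one-sided and because the conjugate $\xi_0 H \xi_0^{-1}$ is only a ``subgroup chunk'' in an approximate sense near the identity. Apart from that, I need to shrink $U$ at the outset so that $U^{-1}U$ still lies inside the neighborhood furnished by Proposition~\ref{rhoaway}, ensuring that $\xi_0^{-1}\xi$ belongs to the domain on which that proposition applies; this is a routine adjustment. Everything else is bookkeeping around the exponential map.
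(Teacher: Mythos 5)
Your proposal is correct and follows essentially the same route as the paper: reduce to a reference $\xi_{0}$, use the equal-dimension observation to upgrade the one-sided inclusion $\xi H\xi^{-1}\subset R^{(\rho)}$ to two-sided $O(\rho)$-closeness of the conjugated subalgebras, deduce $d((\Ad\xi_{0}^{-1}\xi)\h,\h)=O(\rho)$, and invoke Proposition~\ref{rhoaway} in the adjoint group applied on the enlarged neighborhood $U^{-1}U$ to obtain $\xi\in\xi_{0}S_{\h}^{(\rho^{c})}$. The only cosmetic difference is that the paper chooses the exponent $2c$ in Proposition~\ref{rhoaway} in advance so as to absorb the multiplicative factor $L$ into a slight loss in the exponent, whereas you absorb it by passing from $c$ to any $c'<c$; this is the same bookkeeping.
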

\begin{proof}
Since the statement only involves a neighborhood of the identity, it is enough to prove it in the case the group $G$ has trivial center.
Choose a neighborhood $U$ of the identity and a constant $c>0$ such that Proposition~\ref{rhoaway} holds for $UU^{-1}$ and $2c$.\\
Given two subgroup chunks $H$ and $R$ having the same dimension, assume that for some $\xi_0$ in $U$, $\xi_0 H \xi_0^{-1}$ is included in $R^{(\rho)}$.
If $\xi$ is another element satisfying $\xi H\xi^{-1}\subset R^{(\rho)}$, we have, for some constant $L$ depending only on $U$,
$$ (\xi_0^{-1}\xi)H(\xi\xi_0^{-1})^{-1} \subset H^{(L\rho)}.$$
and this implies, if $\h$ denotes the Lie algebra of $H$,
$$ d((\Ad \xi_0^{-1}\xi)\h,\h) \leq L\rho.$$
From Proposition~\ref{rhoaway}, it follows that there exists a closed subgroup $S$ such that for $\rho$ small enough,
$$\xi H\xi^{-1} \subset R^{(\rho)} \quad\Longrightarrow\quad \xi\in \xi_0S^{(L\rho^{2c})}\subset\ \xi_0 S^{(\rho^c)}.$$
This proves the lemma.
\end{proof}

The proof of Proposition~\ref{troublemakers} is based on the following lemma, which is an application of the inclusion-exclusion principle.

\begin{lemma}\label{fewcosets}
Let $G$ be a Lie group, and fix a neighborhood $U$ of the identity and constants $a$ and $b$ as given by Lemma~\ref{cosetintersection}.\\
Let $\sigma\in(0,d)$ and fix an integer $\ell\in\{1,\dots,d-1\}$.
Let $A$ be any subset of $U$, and $\delta>0$ some small scale.
Given two parameters $K$ and $\tau>0$, we are interested in subsets $\widetilde{C}$ of $U$ such that:
\begin{enumerate}
\item $\delta^{K\epsilon}N(A,\delta) \leq N(\widetilde{C}\cap A,\delta)$
\item We have $\widetilde{C}\subset C^{(\delta^\tau)}$, for some coset chunk $C$ of dimension $\ell$.
\item If $D$ is any coset chunk such that $\dim D<\ell$, then\\
$N(\widetilde{C}\cap D^{(\delta^{b\tau})},\delta) \leq \delta^{4K\epsilon}N(A,\delta)$.
\end{enumerate}
Let $\mathcal{C}$ be the union of all such sets $\widetilde{C}$.
Then, there exists a family $(C_i)_{1\leq i\leq 2\delta^{-K\epsilon}}$ of coset chunks of dimension $\ell$ such that
$$\mathcal{C} \subset \bigcup_{i=1}^{2\delta^{-K\epsilon}} C_i^{(\delta^{a\tau})}.$$
\end{lemma}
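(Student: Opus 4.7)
My plan is a greedy selection of dimension-$\ell$ coset chunks, followed by an inclusion--exclusion estimate that exploits Lemma~\ref{cosetintersection} together with condition~3. I would inductively construct a sequence $\widetilde{C}_1,\ldots,\widetilde{C}_m$ of admissible sets with associated dimension-$\ell$ coset chunks $C_1,\ldots,C_m$ as follows: at step $i$, pick any admissible $\widetilde{C}_i$ not already contained in $\bigcup_{j<i} C_j^{(\delta^{a\tau})}$; if no such choice exists, the construction stops. By design, upon termination every admissible $\widetilde{C}$, and hence every point of $\mathcal{C}$, lies in $\bigcup_{i\leq m} C_i^{(\delta^{a\tau})}$, so the task reduces to proving $m \leq 2\delta^{-K\epsilon}$.

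To bound $m$, I first observe that the greedy rule forces the $C_i$'s to be mutually well-separated. For $j<i$, the greedy criterion produces a point $x \in \widetilde{C}_i \subset C_i^{(\delta^\tau)}$ with $d(x,C_j) > \delta^{a\tau}$; since $a<1$ and $\delta^\tau \ll \delta^{a\tau}$ for $\delta$ small, this forces $C_i$ itself to stray outside the $\delta^{a\tau}/2$-neighborhood of $C_j$. After adjusting constants to fit the hypothesis of Lemma~\ref{cosetintersection} at scale $\rho \asymp \delta^\tau$, I obtain a coset chunk $D_{ij}$ with $\dim D_{ij}<\ell$ satisfying
$$\widetilde{C}_i^{(\delta)} \cap \widetilde{C}_j^{(\delta)} \subset C_i^{(2\delta^\tau)} \cap C_j^{(2\delta^\tau)} \subset D_{ij}^{(\delta^{b\tau})}.$$

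Next, I set $E_i = \widetilde{C}_i \cap A$ and exploit a double-counting inequality. Condition~1 gives $N(E_i,\delta) \geq \delta^{K\epsilon} N(A,\delta)$, and the separation above combined with condition~3 applied to $\widetilde{C}_i$ and $D_{ij}$ (absorbing a factor of $2$ into $b$) yields $N(\widetilde{C}_i \cap D_{ij}^{(\delta^{b\tau})},\delta) \leq \delta^{4K\epsilon} N(A,\delta)$. Translating to the Haar measure via $\mu(X^{(\delta)}) \asymp \delta^{\dim G} N(X,\delta)$ and applying Bonferroni's inequality to $\mu(\bigcup_i E_i^{(\delta)}) \leq \mu(A^{(\delta)})$, I obtain, after dividing through by $\delta^{\dim G}N(A,\delta)$, an estimate of the form
$$1 \gtrsim m\,\delta^{K\epsilon} - \binom{m}{2}\delta^{4K\epsilon}.$$
Plugging in $m = 2\delta^{-K\epsilon}$, the first term is of order $2$ while the second is $O(\delta^{2K\epsilon}) \to 0$ as $\delta \to 0$, yielding the desired contradiction for $\delta$ small enough.

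The principal technical obstacle I anticipate is the bookkeeping between the three relevant scales, namely $\delta^\tau$ in condition~2, $\delta^{a\tau}$ in the greedy covering, and $\rho^a$ in the hypothesis of Lemma~\ref{cosetintersection}. Passing between them costs additive errors of order $\delta^\tau$ which are negligible compared to $\delta^{a\tau}$ (since $a<1$), but must be tracked carefully so that the final cover is indeed by $C_i^{(\delta^{a\tau})}$ with the original exponent $a$. A secondary nuisance is that covering numbers fail a literal inclusion--exclusion, so one must pass through Haar-measure estimates on $\delta$-neighborhoods; these are comparable to covering numbers up to constants depending only on $G$, but the absolute constants must be kept under control to reach the explicit bound $2\delta^{-K\epsilon}$ and not merely $O(\delta^{-K\epsilon})$.
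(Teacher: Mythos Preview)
Your approach is essentially the same as the paper's: greedy selection of coset chunks followed by an inclusion--exclusion count, using Lemma~\ref{cosetintersection} together with condition~3 to bound pairwise overlaps. There is, however, one genuine gap in your final step.

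The Bonferroni inequality you derive,
\[
1 \;\gtrsim\; m\,\delta^{K\epsilon} \;-\; \binom{m}{2}\delta^{4K\epsilon},
\]
is a downward-opening quadratic in $m$. It is indeed violated for $m$ near $2\delta^{-K\epsilon}$, but it becomes \emph{vacuous} again once $m$ passes the second root, which is of order $\delta^{-3K\epsilon}$. Hence ``plugging in $m=2\delta^{-K\epsilon}$'' and obtaining a contradiction does not, by itself, rule out very large $m$. The repair is immediate and is exactly what the paper does: if $m$ were large, discard all but $M:=\min(m,\lfloor\delta^{-3K\epsilon}\rfloor)$ of the sets and apply inclusion--exclusion only to those. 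The $i$-th retained contribution is at least $\delta^{K\epsilon}N(A,\delta)\bigl(1-(i-1)\delta^{3K\epsilon}\bigr)\geq 0$ for $i\leq M$, and summing this arithmetic progression gives
\[
N(A,\delta)\;\geq\;\tfrac{1}{2}\,\delta^{K\epsilon}M\,N(A,\delta),
\]
forcing $M\leq 2\delta^{-K\epsilon}<\delta^{-3K\epsilon}$, hence $m=M\leq 2\delta^{-K\epsilon}$.

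On your secondary concern: the paper writes the inclusion--exclusion directly in terms of covering numbers and thereby lands on the sharp constant~$2$. Your detour through Haar measure of $\delta$-neighborhoods is arguably more honest (covering numbers do not literally satisfy Bonferroni), but it costs a multiplicative constant depending only on $G$. Since the lemma is only ever used downstream with bounds of the form $\delta^{-O(\epsilon)}$, this loss is harmless.
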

\begin{proof}
Choose successively sets $\widetilde{C}_i$, $i\geq 1$ satisfying all requirements of the lemma -- in particular $\widetilde{C}_i\subset C_i^{(\delta^\tau)}$ for some $\ell$-dimensional coset chunk $C_i$ -- and such that for each $i$,
$$C_{i+1}^{(\delta^\tau)} \not\subset \bigcup_{k=1}^i C_k^{(\delta^{a\tau})}.$$
Clearly, this procedure must stop, and when it does, we obtain a finite family $(C_i)_{1\leq i\leq N}$ of coset chunks such that
$$\mathcal{C} \subset \bigcup_{i=1}^{N} C_i^{(\delta^{a\tau})}.$$
It remains to check that $N\leq 2\delta^{-K\epsilon}$.
For that, first note that for all $1\leq i<j\leq N$, by Lemma~\ref{cosetintersection}, there exists a coset chunk $D$ with $\dim D<\ell$
such that $C_i^{(\delta^{\tau})}\cap C_j^{(\delta^{\tau})} \subset D^{(\delta^{b\tau})}$.
In particular,
$$N(\widetilde{C}_i\cap \widetilde{C}_j,\delta)  \leq N(\widetilde{C}_j\cap C_i^{(\delta^{\tau})}\cap C_j^{(\delta^{\tau})},\delta)
 \leq N(\widetilde{C}_j\cap D^{(\delta^{b\tau})},\delta),$$
whence, using the third assumption on $\widetilde{C_j}$,
$$N(\widetilde{C}_i\cap \widetilde{C}_j,\delta) \leq \delta^{4K\epsilon}N(A,\delta).$$
Now, as $A$ certainly contains $\bigcup_{i=1}^N A\cap \widetilde{C}_i$, we find
\begin{IEEEeqnarray*}{rCl}
N(A,\delta) & \geq & N(A\cap \widetilde{C}_1,\delta) + (N(A\cap \widetilde{C}_2,\delta)-N(A\cap \widetilde{C}_1\cap \widetilde{C}_2,\delta))\\
&& + (N(A\cap \widetilde{C}_3,\delta)-N(A\cap \widetilde{C}_1\cap \widetilde{C}_3,\delta)-N(A\cap \widetilde{C}_2\cap \widetilde{C}_3,\delta)) + \dots\\
& \geq & \delta^{K\epsilon}N(A,\delta)\left[1+(1-\delta^{3K\epsilon})+(1-2\delta^{3K\epsilon})+\dots\right],
\end{IEEEeqnarray*}
keeping only the first $\min(N,\delta^{-3K\epsilon})$ terms in the sum.
The terms on the right-hand side of the above inequality are non-negative and form an arithmetic progression, so that we get the lower bound
$$N(A,\delta) \geq \delta^{K\epsilon}\frac{1}{2}\min(N,\delta^{-3K\epsilon})N(A,\delta).$$
This forces $\min(N,\delta^{-3K\epsilon})=N$ and in turn,
$$N\leq 2\delta^{-K\epsilon}.$$
\end{proof}

\begin{proof}[Proof of Proposition~\ref{troublemakers}]
\underline{A}. Choose a symmetric neighborhood $U$ such that both Lemma~\ref{niceh} and Lemma~\ref{dfirst} hold in the neighborhood $U^4$.
Let $\xi$ be an element of $\Xi$.
From the non-commutative version of the Balog-Szemerédi-Gowers Lemma, due to Tao \cite[Theorem~6.10]{taoestimates}, there exists a constant $K\geq 2$
such that there exists a $\delta^{-K\epsilon}$-approximate subgroup $\widetilde{H}$ and elements $x,y$ in $G$
such that
$$\delta^{K\epsilon}N(A,\delta) \leq N(x\widetilde{H}\cap A,\delta) \leq N(\widetilde{H},\delta) \leq \delta^{-K\epsilon}N(A,\delta)$$
and
$$\delta^{K\epsilon}N(A,\delta) \leq N(\xi^{-1}\widetilde{H}y\cap A,\delta) \leq N(\widetilde{H},\delta) \leq \delta^{-K\epsilon}N(A,\delta).$$

\noindent\underline{B}. First, we claim that $\widetilde{H}$ is a $(\sigma,3K\epsilon)$-set at scale $\delta$.\\
Indeed, suppose for a contradiction that for some ball $B_\rho$ of radius $\rho\geq\delta$, we have
$$N(\widetilde{H}\cap B_\rho,\delta) > \delta^{-3K\epsilon}\rho^\sigma N(\widetilde{H},\delta).$$
Then,
$$N((\widetilde{H}\cap B_\rho)\widetilde{H},\delta)
\leq N(\widetilde{H}^2,\delta)
\leq \delta^{-K\epsilon} N(\widetilde{H},\delta),$$
so that
$$N((\widetilde{H}\cap B_\rho)\widetilde{H},\delta)
\leq \delta^{2K\epsilon}\rho^{-\sigma} N(\widetilde{H}\cap B_\rho,\delta).$$
Applying the Covering Lemma~\ref{covering} to the sets $\widetilde{H}\cap B_\rho$ and $\widetilde{H}$, we find that for some constant $L$ depending on $U$ only, there is a set $W$ of cardinality at most $L\delta^{2K\epsilon}\rho^{-\sigma}$ such that,
$$\widetilde{H} \subset W\cdot(\widetilde{H}^2\cap B(1,L\rho)) \subset \bigcup_{w\in W}B(w,L\rho).$$
Recalling that $N(x\widetilde{H}\cap A,\delta)\geq\delta^{K\epsilon}N(A,\delta)$, we see that for some $w$ in $W$, we have
$$N(A\cap B(xw,L\rho),\delta) \geq \frac{1}{\card W} \delta^{K\epsilon}N(A,\delta) \geq \frac{1}{L}\delta^{-K\epsilon}\rho^{\sigma}N(A,\delta),$$
contradicting the fact that $A$ is $(\sigma,\epsilon)$-set at scale $\delta$, since for $\delta$ small enough,
$$\frac{1}{L}\delta^{-K\epsilon}\rho^{\sigma} > (L\rho)^{\sigma}\delta^{-\epsilon}.$$\\

\noindent\underline{C}. Now, let $\epsilon_0$, $K_\ell$ and $\tau_\ell$, $1\leq \ell\leq d-1$, be as in Lemma~\ref{niceh}.
Provided $\epsilon<\epsilon_1:=\frac{\epsilon_0}{2K}$, Lemma~\ref{niceh} shows that there is an integer $\ell\in\{1,\dots,d-1\}$ and a subgroup chunk $H'$
of dimension $\ell$ such that, for some set $\widetilde{H}'\subset  H'^{(\tau_\ell)}$,
\begin{enumerate}
\item There is a finite set $X$ of cardinality at most $\delta^{-3K_\ell K\epsilon}$ such that $\widetilde{H}\subset X\widetilde{H}'\cap \widetilde{H}'X$.
\item If $D$ is any coset chunk in $U$ such that $\dim D<\ell$, then\\
$N(\widetilde{H}'\cap D^{(\delta^{b\tau_\ell})},\delta) \leq \delta^{24K_\ell K\epsilon}N(\widetilde{H},\delta)
\leq \delta^{23K_\ell K\epsilon}N(A,\delta)$.
\end{enumerate}
We have $x\widetilde{H}\subset xX\widetilde{H}'$, and recalling that $N(x\widetilde{H}\cap A,\delta)\geq\delta^{K\epsilon}N(A,\delta)$, we find that for some $x'$ in $xX$,
$$N(x'\widetilde{H}'\cap A,\delta) \geq \frac{1}{\card X}\delta^{K\epsilon}N(A,\delta) \geq \delta^{4K_\ell K\epsilon}N(A,\delta).$$
Similarly, there exists $y'$ such that
$$N(\xi^{-1}\widetilde{H}'y'\cap A,\delta) \geq \delta^{4K_\ell K\epsilon}N(A,\delta).$$
Denote by $\mathcal{C}$ the union of all subsets $\widetilde{C}$ of $U^4$ satisfying the conditions of Lemma~\ref{fewcosets}, with constants $K'=4K_\ell K$ and $\tau=\tau_\ell$.
By Lemma~\ref{fewcosets}, there is a family of coset chunks $(C_i)_{1\leq i\leq 2\delta^{-4K_\ell K\epsilon}}$ such that
$$\mathcal{C} \subset \bigcup_{i=1}^{2\delta^{-4K_\ell K\epsilon}} C_i^{(\delta^{a\tau})}.$$

\noindent\underline{D}. As $x'\widetilde{H}'$ and $\xi^{-1}\widetilde{H}'y'$ both satisfy the conditions of Lemma~\ref{fewcosets}, there must exist indices $i$ and $j$ such that
\begin{equation}\label{iandj}
x'\widetilde{H}' \subset C_i^{(\delta^{a\tau})} \quad\mbox{and}\quad \xi^{-1}\widetilde{H}'y' \subset C_j^{(\delta^{a\tau})}.
\end{equation}
Denote by $H_i$ the left-direction of $C_i$, i.e. the subgroup chunk such that there exists $x_i$ such that $C_i=x_iH_i$, and by $R_j$ the right-direction of $C_j$.
From (\ref{iandj}), we get
$$\xi^{-1}H_i\xi \subset R_j^{(\delta^{a\tau})},$$
and therefore, by Corollary~\ref{almoststab}, for some small $c>0$ depending only on $U$,
$$\xi \in C_{i,j}^{(\delta^{ca\tau})},$$
where $C_{i,j}$ is a left-coset of some proper maximal closed subgroup in $G$.\\
Letting $\eta=ca\tau_{d-1}=\min_{1\leq \ell\leq d-1}ca\tau_\ell$ and considering all (at most $\delta^{-O(\epsilon)}$) cosets $C_{i,j}$ arising for some dimension $\ell\in\{1,\dots,d-1\}$, this proves the proposition.
\end{proof}

\subsection{Escaping from the troublemakers}

We start by a lemma that will allow us to escape from hyperplanes in the adjoint representation.

\begin{lemma}\label{parameters}
Let $G$ be a connected simple real Lie group of dimension $d$. There exist integers $k$ and $s$ such that for any set $A$ containing $1$ and generating a dense subgroup of $G$, there exists a finite family $(a_i)_{1\leq i\leq k}$ of elements of the product set $A^s$
such that for any nonzero vector $v\in \g$ and any hyperplane $V<\g$, there exists an index $i$ for which
$$(\Ad a_i)v \not\in V.$$
\end{lemma}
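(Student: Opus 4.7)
The proof splits into two stages: first, produce elements $g_1,\ldots,g_k$ in $G$ itself (with $k$ depending only on $G$) satisfying the required separation property; then, approximate each $g_i$ by an element of $A^s$ for some $s$ depending only on $G$.

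For the first stage, the key input is the irreducibility of the adjoint representation $\Ad: G \to GL(\g)$: since $G$ is connected and simple, any $\Ad$-invariant subspace of $\g$ is $\ad$-invariant, hence an ideal of $\g$, hence trivial. In particular, $\Ad(G)v$ linearly spans $\g$ for every nonzero $v\in\g$. Consider the compact space $X=\mathbb{P}(\g)\times\mathbb{P}(\g^*)$ and, for each $g\in G$, the open subset
$$W_g = \{([v],[\lambda])\in X \,:\, \lambda(\Ad(g)v)\neq 0\}.$$
Irreducibility guarantees that every $([v],[\lambda])\in X$ lies in some $W_g$, so $\{W_g\}_{g\in G}$ is an open cover of $X$. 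Compactness produces a finite subcover $X=W_{g_1}\cup\cdots\cup W_{g_k}$, and any tuple $(g_1,\ldots,g_k)$ obtained this way works: for every nonzero $v\in\g$ and every hyperplane $V=\ker\lambda$, some index $i$ satisfies $\lambda(\Ad(g_i)v)\neq 0$, i.e.\ $\Ad(g_i)v\notin V$.

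For the second stage, the set $\Omega\subset G^k$ of such "good" tuples is nonempty (by stage one) and Zariski-open in $G^k$: its complement is the projection to $G^k$ of the Zariski-closed subset
$$\{(g_1,\ldots,g_k,[v],[\lambda])\in G^k\times X \,:\, \lambda(\Ad(g_i)v)=0 \text{ for all } i\},$$
and this projection is closed because the fiber $X$ is compact. Since $\langle A\rangle$ is topologically dense in the simple Lie group $G$, it is also Zariski-dense (proper algebraic subgroups of $G$ have strictly smaller dimension). An Eskin-Mozes-Oh type escape-from-subvarieties argument then produces an integer $s$, depending only on the algebraic complexity of $G^k\setminus\Omega$ (hence only on $G$), such that $(A^s)^k$ is not contained in $G^k\setminus\Omega$. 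Any tuple $(a_1,\ldots,a_k)\in(A^s)^k\cap\Omega$ satisfies the conclusion of the lemma.

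The main obstacle is guaranteeing the uniformity of $s$ in $A$: since $A$ is given with no quantitative rate of density, the naïve approach (use continuity of the condition, then density of $\langle A\rangle$ to approximate each $g_i$) produces an $s$ depending on $A$. The escape-from-subvarieties principle resolves this by bounding $s$ purely in terms of the degree of the bad subvariety. A secondary technicality is that one is restricted to products of elements of $A$ with no inverses; the argument still goes through because in a connected simple Lie group the semigroup $\bigcup_s A^s$ generated by such an $A$ is dense, so this restriction only affects constants.
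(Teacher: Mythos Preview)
Your approach is genuinely different from the paper's. The paper gives an elementary inductive argument: fixing a topologically generating pair $(a,b)$ of elements of $A$ (citing Breuillard--Gelander), it shows by induction on $\ell<d$ that for any nonzero $v$ and any $\ell$-dimensional subspace $V$, some word of length $\ell$ in $\{1,a,b\}$ moves $v$ out of $V$. The inductive step is simply that $V\cap a^{-1}V\cap b^{-1}V$ has dimension $<\dim V$, since $a$ and $b$ cannot both lie in the proper closed subgroup $\Stab V$. This yields explicit constants $s=d-1$ and $k=2^d-1$, and uses nothing beyond real irreducibility of the adjoint representation.

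Your compactness-plus-escape strategy is conceptually natural, but Stage~2 has a real gap. You claim $\Omega$ is Zariski-open because ``the projection is closed because the fiber $X$ is compact''. This is valid in the Euclidean topology, but not in the real Zariski topology: over a non-closed field the real points of a projective fibration need not map real-Zariski-closed sets to real-Zariski-closed sets. (Concretely: project $\{(t,[x{:}y])\in\mathbb{A}^1\times\mathbb{P}^1 : x^2+ty^2=0\}$ to $\mathbb{A}^1$; the image of the real points is the half-line $t\leq 0$.) Since the Eskin--Mozes--Oh escape lemma needs a genuine algebraic subvariety, not merely a closed semialgebraic set, this matters. When $\g$ is absolutely simple you can repair the argument by passing to $\mathbb{C}$: the complex adjoint representation is still irreducible, Stage~1 goes through over $\mathbb{C}$, the complex bad locus is proper and Zariski-closed by elimination theory, and its real points give a proper real variety containing your bad set. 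But when $G$ is a complex simple group regarded as a real Lie group, $\g_\mathbb{C}$ splits and the complexified adjoint representation is reducible; taking $v$ in one summand and $\lambda$ vanishing on it shows the complex bad locus is \emph{all} of $G_\mathbb{C}^k$, and the repair collapses. The paper's hands-on induction sidesteps this entirely by never leaving $\mathbb{R}$ and never invoking the Zariski topology; it is in effect a direct escape argument tailored to the real-irreducible situation.
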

\begin{proof}
Let $A$ be a topologically generating set of $G$, and fix $(a,b)$ a topologically generating pair of elements of $A$ (for the existence of such a pair, see for example \cite{breuillardgelander_densefreesubgroups}). By induction on $\ell<d$ we will show that for any nonzero vector $v$ in $\g$ and any subspace $V$ of dimension $\ell$, there exist elements $a_1,\dots,a_\ell$ in $\{1,a,b\}$ such that
$$a_\ell\dots a_1\cdot v \not\in V.$$
\underline{$\ell=1$}\\
If $v\not\in V$, just take $a_1=1$. Otherwise, $V=\R v$. The stabilizer $\Stab V$ of the line $V$ is a proper closed subgroup of $G$ and therefore we must have $a\not\in\Stab V$ or $b\not\in\Stab V$. This shows that $av\not\in V$ or $bv\not\in V$.\\
\underline{$\ell\rightarrow\ell+1$}\\
Suppose we know the result for subspaces of dimension at most $\ell$, and let $V$ be a proper subspace of $\g$ of dimension $\ell+1<d$.
Let $V_1=V\cap a^{-1}V\cap b^{-1}V$. Either $a$ or $b$ is out of the proper closed subgroup $\Stab V$, so we must have $\dim V_1\leq\ell$. By the induction hypothesis, there exist elements $a_1,\dots,a_\ell$ in $\{1,a,b\}$ such that $a_\ell\dots a_1\cdot v \not\in V_1$. By definition of $V_1$ this shows that for some $a_{\ell+1}$ in $\{1,a,b\}$, we must have
$$a_{\ell+1}a_\ell\dots a_1\cdot v \not\in V.$$
This proves the lemma, with constants $s=d-1$ and $k=2^d-1$ (the number of words of length at most $d-1$ in $a$ and $b$).
\end{proof}

The goal of this subsection is to show that one can always escape from the set $\Xi$ of troublemakers of a $(\sigma,\epsilon)$-set $A$, in the following precise sense.

\begin{proposition}\label{outofxi}
Let $G$ be a simple Lie group. There exists a neighborhood $U$ of the identity such that, given $\sigma\in(0,d)$, there exists $\epsilon=\epsilon(\sigma)>0$ such that the following holds.\\
Suppose $\{a_i\}_{1\leq i\leq k}$ is a finite set of elements satisfying the conclusion of Lemma~\ref{parameters}, let $n=d^d$ and consider the set $\Pi$ of all projections $\pi:G^{\times n} \rightarrow G$ of the form
$$\pi(x_1,\dots,x_n) = x_{i_1} a_{j_1} x_{i_2} a_{j_2} \dots a_{j_{m-1}}x_{i_m},$$
where $\{i_1<i_2<\dots<i_m\}\subset \{1,\dots,n\}$ and $(j_1,\dots,j_{m-1})\in \{1,\dots,k\}^{m-1}$.\\
Then, for all $\delta>0$ small enough, if $A\subset U$ is a $(\sigma,\epsilon)$-set at scale $\delta$, and if $\Omega$ is any subset of the Cartesian product set $A^{\times n}$ such that $N(\Omega,\delta) \geq \delta^{\epsilon}N(A,\delta)^n$, then there exists some $\pi$ in $\Pi$ such that
$$\pi(\Omega) \not\subset \Xi,$$
where $\Xi$ is the set of troublemakers for $A$, as defined in (\ref{tm}).
\end{proposition}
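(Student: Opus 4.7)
My plan is to argue by contradiction, assuming $\pi(\Omega) \subset \Xi$ for every $\pi \in \Pi$. Since $|\Pi|$ is bounded in terms of $n$ and $k$ only, and Proposition~\ref{troublemakers} covers $\Xi$ by at most $\delta^{-O(\epsilon)}$ neighborhoods $C_j^{(\delta^\eta)}$ of coset chunks of dimension at most $d-1$, a pigeonhole over $\pi$ and over the $C_j$'s yields $\Omega^* \subset \Omega$ of size $N(\Omega^*, \delta) \geq \delta^{O(\epsilon)} N(A,\delta)^n$ such that, for each $\pi \in \Pi$, a single coset chunk $C_\pi$ satisfies $\pi(\Omega^*) \subset C_\pi^{(\delta^\eta)}$. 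Throughout, $\epsilon$ will be chosen small enough compared to $\eta$ and $\sigma$ that the cumulative $\delta^{O(\epsilon)}$ losses are absorbed.

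The trivial projections $\pi_i(x) = x_i$ give $M_i := \mathrm{proj}_i(\Omega^*) \subset g_i H_i^{(\delta^\eta)}$ with $\h_i := \Lie(H_i) \subsetneq \g$, and a Fubini-type pigeonhole yields $N(M_i, \delta) \geq \delta^{O(\epsilon)} N(A,\delta)$. The non-trivial projections are then used to iteratively sharpen the constraint on a selected marginal. For $\pi(x) = x_{i'} a_j x_i$, freezing a typical $x_{i'}$ traps a large fiber of $M_i$ inside both $g_i H_i^{(\delta^\eta)}$ and a translate of $C_\pi^{(\delta^\eta)}$; Lemma~\ref{cosetintersection} then offers a dichotomy: either this intersection lies in a $\delta^{O(\eta)}$-neighborhood of a strictly lower-dimensional coset chunk (a genuine dimension reduction), or the old chunk is contained in the new one up to $\delta^{O(\eta)}$-error, which—after moving to the identity—translates into a relation $\Ad(h)\h_i \subset \h_\pi$ modulo $\delta^{O(\eta)}$ with $h$ close to $g_\pi^{-1}x_{i'}a_j g_i$ and $\h_\pi$ a proper subalgebra. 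In this second "rigid" case, Corollary~\ref{almoststab} combined with Lemma~\ref{parameters}, applied to a vector of $\g$ outside $\h_\pi$, produces some $a_{j^*}$ violating the containment, and replacing $j$ by $j^*$ in the projection forces the dimension-reducing horn.

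Iterating the procedure—drawing a fresh coordinate from $\{1,\ldots,n\}$ and using progressively longer projections $x_{i_0} a_{j_0} x_{i_1} a_{j_1} \cdots a_{j_{r-1}} x_{i_r}$ from $\Pi$ at each round—produces a strictly decreasing sequence of dimensions for the subalgebra trapping a chosen marginal. After at most $d$ rounds the dimension reaches $0$, so the marginal sits in a ball of radius $\delta^{O(\eta)}$. The second defining property of a $(\sigma,\epsilon)$-set then gives $N(M,\delta) \leq \delta^{\sigma \cdot O(\eta) - \epsilon} N(A,\delta)$, contradicting $N(M,\delta) \geq \delta^{O(\epsilon)} N(A,\delta)$ for $\epsilon$ sufficiently small. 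The generous choice $n = d^d$ provides far more coordinate slots than the $\leq d$ rounds of dimension reduction could ever consume, with room to spare for the bounded number of auxiliary "freezing" coordinates needed per round. The principal obstacle is ensuring the dichotomy can always be closed by some projection in $\Pi$: in the rigid horn, one must guarantee that the escape family $\{a_j\}$ produces a valid replacement projection, which is exactly what Lemma~\ref{parameters} is designed to achieve—an arbitrary finite set of elements simply would not suffice.
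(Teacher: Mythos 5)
The proposal has the right overall architecture---contradiction, cover $\Xi$ via Proposition~\ref{troublemakers}, pigeonhole, iterative dimension reduction---but the engine driving each reduction step is flawed in a way that does not seem repairable without essentially rediscovering the paper's Lemma~\ref{dfirst}.

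The gap is in the dichotomy for two-coordinate projections $\pi(x)=x_{i'}a_jx_i$. You correctly observe that after freezing $x_{i'}=u$, Lemma~\ref{cosetintersection} either hands you a lower-dimensional chunk trapping $x_i$, or else the ``rigid'' inclusion $g_iH_i\subset (ua_j)^{-1}C_\pi^{(\delta^{a\eta})}$ holds, which translates to an approximate containment $\Ad(g_\pi^{-1}ua_jg_i)\,\h_i\subset\h_\pi$. The problem is your exit strategy from the rigid case. You propose to invoke Lemma~\ref{parameters} to find $a_{j^*}$ for which this approximate containment fails, and then re-run the projection with $a_{j^*}$ to force the dimension drop. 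But when you replace $a_j$ by $a_{j^*}$, the projection $\pi$ changes to $\pi^*$, and with it the target coset chunk changes from $C_\pi=g_\pi H_\pi$ to an entirely different $C_{\pi^*}=g_{\pi^*}H_{\pi^*}$. There is no reason why the subalgebra $\h_{\pi^*}$ should be the one your chosen $a_{j^*}$ escapes; it can adapt to whatever $a_{j^*}$ you pick. In other words, the rigid branch of the dichotomy is \emph{not} a single containment to refute---it is a moving target indexed by $j$. When $\dim\h_i<\dim\h_\pi$, the rigid case is an honest possibility for every $j$ simultaneously, and nothing in Lemma~\ref{parameters} or Corollary~\ref{almoststab} rules it out.

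This is precisely the difficulty that the paper's Lemma~\ref{dfirst} is built to overcome. There, one works with a $d$-coordinate projection $\pi(x)=x_1a_1\cdots x_{d-1}a_{d-1}x_d$, and the constraint $\pi(\Omega)\subset C^{(\rho)}$ now pins down a \emph{single} target coset chunk $C=gH$. The elements $a_1,\ldots,a_{d-1}$ are chosen (via Lemma~\ref{parameters} and a compactness argument formalized in Lemma~\ref{separatedbasis}) so that the conjugated directions $(\Ad t_i^{-1})v_i$, $v_i\in\h_i$, form a $c_0$-separated basis of $\g$. Then no matter which proper subalgebra $\h$ the target presents, some $(\Ad t_{i_0}^{-1})v_{i_0}$ must be quantitatively far from $\h$, and that index $i_0$ is the coordinate where the dimension drop happens. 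The conjugated family escapes \emph{every} hyperplane at once, which is stronger than---and irreducible to---the one-at-a-time escape you are invoking. So the proposal should be revised: the reduction step really does need a product of $d$ coordinates, the escape condition~(\ref{tis}) fixed \emph{before} you learn the target, and then the index $i_0$ singled out by Lemma~\ref{separatedbasis}.

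One smaller remark: your simultaneous pigeonhole over all $\pi\in\Pi$ (to pick one $C_\pi$ per projection for the whole of $\Omega^*$) is legitimate since $|\Pi|$ is bounded in terms of $d$ and $k$, but the cost is $\delta^{O(\epsilon)|\Pi|}$, and you never actually need all projections at once; the paper's adaptive, round-by-round pigeonhole is cheaper and clearer. The observation that $n=d^d$ is ``generous'' is also slightly off: the paper's bookkeeping gives $n_s=d^{d-s}$, so exactly $d$ coordinates are consumed per block per round, and $n=d^d$ is tight for the $d$-round process rather than an overshoot.
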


The proof of Proposition~\ref{outofxi} will be based on a repeated application of the following lemma.

\begin{lemma}\label{dfirst}
Let $G$ be a simple Lie group. There exist a neighborhood $U$ of the identity and a constant $b>0$ such that given $c_0>0$, for all $\rho>0$ small enough (in terms of $c_0$), the following holds.\\
Let $A$ be a subset of $U$, and $\Omega$ a subset of the Cartesian product set $A^{\times n}$ ($n\geq d$).
Assume that there exist coset chunks $C_i$, $1\leq i\leq d$ in $U$ such that
$$\Omega\subset C_1^{(\rho)}\times\dots\times C_d^{(\rho)}\times U\times\dots\times U.$$
For each $i$, write $C_i=g_iH_i$, for some subgroup chunk $H_i$ and some element $g_i$ in $U$, and suppose that there exist elements $a_i$, $1\leq i\leq d-1$ in $U$ and unit vectors $v_i\in\h_i$, such that, denoting $t_i=a_ig_{i+1}a_{i+1}g_{i+2}\dots a_{d-1}g_d$, we have,\\
for each $i$ in $\{1,\dots, d-1\}$,
\begin{equation}\label{tis}
d((\Ad t_i^{-1})v_i, \bigoplus_{j=i+1}^d\R (\Ad t_j^{-1})v_j) \geq c_0.
\end{equation}
Finally, let $\pi$ be the map $G^{\times n}\rightarrow G$ defined by
$$\pi(x_1,\dots,x_n) = x_1a_1\dots x_{d-1}a_{d-1}x_d$$
and assume that for some proper coset chunk $C$ in $U$,
$$\pi(\Omega) \subset C^{(\rho)}.$$
Then there exists an index $i_0$ in $\{1,\dots,d\}$ and elements $u_i\in U$, $1\leq i\leq d$, $i\neq i_0$, such that the set
$$\Omega' = \{(x_{i_0},x_{d+1},\dots,x_n)\in A^{\times n-d+1} \,|\, (u_1,\dots,u_{i_0-1},x_{i_0},u_{i_0+1},\dots,u_d,x_{d+1},\dots,x_n)\in\Omega\}$$
satisfies
\begin{enumerate}
\item $N(\Omega',\delta) \geq \frac{N(\Omega,\delta)}{N(A,\delta)^{d-1}}$
\item $\Omega'\subset C'^{(\rho^b)}\times U\times\dots\times U$, for some coset chunk $C'$ in $U$ satisfying $\dim C'< \max_{1\leq i\leq d} \dim C_i$.
\end{enumerate}
\end{lemma}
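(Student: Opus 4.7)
The plan is to select an index $i_0 \in \{1, \dots, d\}$ and representatives $u_j \in A$ for $j \neq i_0$, $1 \leq j \leq d$, so that the fiber
$$\Omega' = \{(x_{i_0},x_{d+1},\dots,x_n) : (u_1,\dots,x_{i_0},\dots,u_d,x_{d+1},\dots,x_n)\in\Omega\}$$
has covering number at least $N(\Omega,\delta)/N(A,\delta)^{d-1}$ (which is pigeonhole on the projection of $\Omega$ to the $(d-1)$-tuple $(x_j)_{j \neq i_0,\, j \leq d}\in A^{d-1}$, an image of covering number at most $N(A,\delta)^{d-1}$), and so that these data pin $x_{i_0}$ into a coset chunk of smaller dimension. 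With the $u_j$ fixed, the hypothesis $\pi(\Omega)\subset C^{(\rho)}$ reads $x_{i_0}\in C^{*(L\rho)}$, where $L$ depends only on $U$ and
$$C^* \;:=\; w_1^{-1}\,g_0 H_0\, w_2^{-1} \;=\; (w_1^{-1}g_0w_2^{-1})(w_2 H_0 w_2^{-1})$$
is a coset chunk with Lie algebra $\Ad(w_2)\h_0$, writing $w_1 = u_1a_1u_2\cdots a_{i_0-1}$ and $w_2 = a_{i_0}u_{i_0+1}\cdots a_{d-1}u_d$. Combining $x_{i_0}\in C_{i_0}^{(\rho)}\cap C^{*(L\rho)}$ and applying Lemma~\ref{cosetintersection} to $C_{i_0}$ and $C^*$ then yields a coset chunk $C'$ with $\dim C' < \dim C_{i_0} \leq \max_i \dim C_i$ and $x_{i_0}\in C'^{(\rho^b)}$, as soon as the non-inclusion $C_{i_0}\not\subset C^{*(\rho^a)}$ holds, where $a,b$ are the constants from that lemma.

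The key step is to choose $i_0$ so this non-inclusion is satisfied. Since $u_j \in C_j^{(\rho)}$ approximates $g_j$ to within $O(\rho)$, the word $w_2$ differs from $t_{i_0}$ by $O(\rho)$, hence $\Ad(w_2^{-1})v_{i_0} = \Ad(t_{i_0}^{-1})v_{i_0}+O(\rho)$, and it suffices to find $i_0$ with $d(\Ad(t_{i_0}^{-1})v_{i_0},\h_0) \geq c_1$ for some $c_1 = c_1(c_0,d) > 0$. Set $w_i := \Ad(t_i^{-1})v_i$; hypothesis~(\ref{tis}) then reads $d(w_i,\Span(w_{i+1},\dots,w_d)) \geq c_0$ for $1\leq i\leq d-1$. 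Applying Gram--Schmidt to $(w_d, w_{d-1},\dots,w_1)$ produces an orthonormal basis $(e_1,\dots,e_d)$ of $\g$ in which the change of basis is triangular with diagonal entries $\geq c_0$. Expanding an arbitrary unit vector $v^\perp = \sum_k \langle e_k,v^\perp\rangle e_k$ and inverting the triangular system inductively yields
$$|\langle e_k, v^\perp\rangle| \;\leq\; C_d\, \max_{1\leq i\leq d}|\langle w_i,v^\perp\rangle|\cdot c_0^{-(k-1)}.$$
Since $\sum_k |\langle e_k,v^\perp\rangle|^2 = 1$, this forces $\max_i|\langle w_i,v^\perp\rangle| \geq c_0^{d-1}/O_d(1)$; because $C$ is proper, $\h_0$ lies in some hyperplane with unit normal $v^\perp$, so some $i_0$ satisfies $d(w_{i_0},\h_0) \geq c_1 := c_0^{d-1}/O_d(1)$. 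For $\rho$ small in terms of $c_0$ and $d$, we obtain $d(\h_{i_0},\Ad(w_2)\h_0) \geq c_1/2 > \rho^a$, which is the required non-inclusion.

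For such an $i_0$ and the corresponding $(u_j)$ provided by pigeonhole, Lemma~\ref{cosetintersection} supplies $C'$ as above; since the coordinates $x_{d+1},\dots,x_n$ do not enter into $\pi$, they remain free in $U$, giving $\Omega'\subset C'^{(\rho^b)}\times U\times\cdots\times U$ as required. Choosing $U$ small enough that $U^{2d}$ is still exponential ensures that the conjugates and products $w_1, w_2, C^*$ lie in a region where both Lemma~\ref{cosetintersection} and the required Lie-algebra estimates are valid. The main obstacle is the staircase-to-hyperplane estimate of the second paragraph: this is the unique step that consumes the quantitative linear independence hypothesis~(\ref{tis}), and its output $c_1\sim c_0^{d-1}$ dictates how small $\rho$ must be taken in terms of $c_0$; the remainder of the argument is bookkeeping on translations, conjugations, and covering-number pigeonholing.
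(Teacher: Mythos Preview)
There is a genuine gap in the second paragraph. The claim ``since $u_j \in C_j^{(\rho)}$ approximates $g_j$ to within $O(\rho)$, the word $w_2$ differs from $t_{i_0}$ by $O(\rho)$'' is false. Membership $u_j\in C_j^{(\rho)}=(g_jH_j)^{(\rho)}$ only says $u_j=g_jh_j+O(\rho)$ for some $h_j\in H_j$, and $h_j$ is an arbitrary element of the subgroup chunk, not close to the identity. Hence
\[
w_2 \;=\; a_{i_0}u_{i_0+1}\cdots a_{d-1}u_d \;=\; t_{i_0}\,s + O(\rho),
\qquad
s \;=\; t_{i_0+1}^{-1}h_{i_0+1}t_{i_0+1}\cdots t_{d-1}^{-1}h_{d-1}t_{d-1}\,h_d,
\]
and $s$ has macroscopic size. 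Thus $\Ad(w_2)\h_0=\Ad(t_{i_0})\Ad(s)\h_0$ need not be close to $\Ad(t_{i_0})\h_0$, and your bound $d(\Ad(t_{i_0}^{-1})v_{i_0},\h_0)\geq c_1$ does not by itself yield $d(\h_{i_0},\Ad(w_2)\h_0)\geq c_1/2$. The non-inclusion hypothesis of Lemma~\ref{cosetintersection} is therefore not established.

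The paper repairs exactly this point by choosing $i_0$ not as \emph{any} index with $d(\Ad(t_{i_0}^{-1})\h_{i_0},\h_0)$ large, but as the \emph{maximal} index with $d(\Ad(t_{i_0}^{-1})\h_{i_0},\h_0)\geq c_1/L^{i_0}$ for a suitable constant $L$. Maximality forces $d(\Ad(t_j^{-1})\h_j,\h_0)<c_1/L^j$ for every $j>i_0$, so each factor $t_j^{-1}h_jt_j$ of $s$ lies near $H_0$; summing gives $d(s,H_0)=O(c_1/L^{i_0+1})$ and hence $d(\Ad(s)\h_0,\h_0)=O(c_1/L^{i_0+1})$. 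With $L$ chosen large enough in terms of the Lipschitz constants of $U$, this perturbation is dominated by the lower bound $c_1/L^{i_0}$, and the non-inclusion survives. Your Gram--Schmidt argument is a perfectly good substitute for the paper's Lemma~\ref{separatedbasis} and guarantees that such a maximal $i_0$ exists (indeed lies in $\{1,\dots,d\}$); what is missing is this maximality device to control the $h_j$'s for $j>i_0$.
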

\begin{proof}
Choose $U$ such that Lemma~\ref{cosetintersection} holds.\\
Let $c_1>0$ be as in Lemma~\ref{separatedbasis} below.
Write $C=gH$ for some $g$ in $U$ and some subgroup chunk $H$ with Lie algebra $\h$.
By Lemma~\ref{separatedbasis} applied to the family $(u_i)=((\Ad t_i)^{-1}v_i)$, there exists an index $i$ in $\{1,\dots,d\}$ for which 
$$d((\Ad t_{i}^{-1})v_{i},\h) \geq c_1.$$
Fix a large constant $L\geq 2$,
and let $i_0\in\{1,\dots,d\}$ be maximal such that
$$d((\Ad t_{i_0}^{-1})\h_{i_0},\h) \geq \frac{c_1}{L^{i_0}}.$$
Then choose any elements $u_i$, $i\neq i_0$, $1\leq i\leq d$ so that the set $\Omega'$ defined in the lemma satisfies
$$N(\Omega',\delta) \geq \frac{N(\Omega,\delta)}{N(A,\delta)^{d-1}}.$$

Now let $(x_{i_0},x_{d+1},\dots,x_n)$ be an element of $\Omega'$. We want to show that $x_{i_0}$ stays in a neighborhood of size $\rho^b$ of a coset chunk $C'$ satisfying $\dim C' < \dim C_{i_0}$.
This will follow from
$$x_{i_0}\in g_{i_0}H_{i_0}^{(\rho)} \quad\mbox{and}\quad u_1a_1\dots a_{i_0-1}x_{i_0}a_{i_0}\dots u_d \in gH^{(\rho)}.$$
For simplicity, denote $t=t_{i_0}$ and $s=t_{i_0}^{-1}a_{i_0}\dots u_d$, so that the above can be rewritten, for some $u$ in $U$,
$$x_{i_0}\in g_{i_0}H_{i_0}^{(\rho)} \quad\mbox{and}\quad x_{i_0} \in utsH^{(\rho)}s^{-1}t^{-1}.$$
To conclude, we want to apply Lemma~\ref{cosetintersection}, but for that, we need to check that the two coset chunks above are away from one another.
For each $i$, write $u_i= g_i h_i$, for some $h_i$ is the subgroup chunk $H_i$, so that
$$s = t_{i_0+1}^{-1}h_{i_0+1}t_{i_0+1}\dots t_{d-1}^{-1}h_{d-1}t_{d-1}h_d.$$
By maximality of $i_0$, we have, for each $i>i_0$, $d((\Ad t_i)^{-1}\h_i,\h)\leq \frac{c_1}{L^i}$,\\
so that in particular, for some constant $L_0$ depending only on $U$,
$$\forall i>i_0,\quad d(t_i^{-1}h_it_i,H)\leq\frac{L_0c_1}{L^i}.$$
This shows that $s$ is quite close to $H$:
$$
d(s,H) \leq \sum_{i>i_0} \frac{L_0c_1}{L^i}
= \frac{2L_0c_1}{L^{i_0+1}},
$$
which implies, for some constant $L_1$ depending only on $U$, 
$$d(sHs^{-1},H) \leq \frac{L_1c_1}{L^{i_0+1}}.$$
On the other hand, by our choice of $i_0$, we have, for some constant $L_2$ depending on $U$ only,
$$d(t^{-1}H_{i_0}t, H) \geq \frac{c_1}{L_2L^{i_0}}.$$
Now, for some constant $L_3$ depending only on $U$,
\begin{align*}
d(H_{i_0},tsHs^{-1}t^{-1}) & \geq \frac{1}{L_3}d(t^{-1}H_{i_0}t,sHs^{-1})\\
& \geq \frac{1}{L_3}[d(t^{-1}H_{i_0}t,H)-d(H,sHs^{-1})],
\end{align*}
so that
\begin{align*}
d(H_{i_0},tsHs^{-1}t^{-1}) &  \geq \frac{1}{L_3}[\frac{c_1}{L_2L^{i_0}}-\frac{L_1c_1}{L^{i_0+1}}]\\
& \geq \frac{c_1}{L^{i_0+1}}
\end{align*}
provided $L$ has been chosen larger than $L_2(L_3+L_1)$.

\bigskip
To conclude, let $a$ and $b$ be the constants from Lemma~\ref{cosetintersection}.
The above inequality ensures that for $\rho>0$ sufficiently small,
$$d(H_{i_0},tsHs^{-1}t^{-1}) \geq \rho^a,$$
so that there exists a coset chunk $C'$ with $\dim C'<\dim C_{i_0}$ and
$$g_{i_0}H_{i_0}^{(\rho)}\cap utsH^{(\rho)}s^{-1}t^{-1} \subset C'^{(\rho^b)}.$$
Thus,
$$\Omega' \subset C'^{(\rho^b)}\times A\times\dots\times A,$$
and the lemma is proven.
\end{proof}

At the beginning of the above proof, we made use of the following easy lemma.

\begin{lemma}\label{separatedbasis}
Let $E$ be a Euclidean vector space of dimension $d$. Given $c_0>0$, there exists a constant $c_1>0$ such that the following holds.\\
Suppose $(u_i)_{1\leq i\leq d}$ is a family of unit vectors of $E$ such that
\begin{equation}\label{sep}
\forall i\in\{1,\dots,d-1\},\quad d(u_{i+1},\Span(u_j)_{1\leq j\leq i})\geq c_0.
\end{equation}
Then, for all proper linear subspace $W<E$, there exists an index $i$ for which
$$d(u_i,W) \geq c_1.$$
\end{lemma}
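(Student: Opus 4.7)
The plan is to reduce the statement to a bound on the coordinates of an arbitrary unit vector in the basis $(u_i)$, since hypothesis~(\ref{sep}) is precisely a quantitative form of linear independence for this family.

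First I would reduce to the case $\dim W = d-1$. Enlarging $W$ to a hyperplane only decreases each $d(u_i,W)$, so it is enough to treat hyperplanes; then for a unit normal $n$ to $W$ one has $d(u_i,W) = |\langle u_i,n\rangle|$, and the conclusion becomes $\max_i |\langle u_i,n\rangle| \ge c_1$.

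Next, I would apply Gram--Schmidt to $u_1,\dots,u_d$ in order, producing an orthonormal basis $(e_i)$ with $u_i \in \Span(e_1,\dots,e_i)$. By construction, the $e_i$-coefficient of $u_i$ equals $d(u_i,\Span(u_j)_{j<i})$, which by (\ref{sep}) is at least $c_0$. The change-of-basis matrix expressing the $u_i$ in terms of the $(e_i)$ is therefore upper triangular, with diagonal entries of absolute value $\ge c_0$ and off-diagonal entries of absolute value $\le 1$; in particular $(u_i)$ is a genuine basis of $E$.

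The key step, which I expect to be the only real work but is essentially routine linear algebra, is to invert this triangular matrix quantitatively. Writing any unit vector $n$ as $n=\sum_i \lambda_i u_i = \sum_j \mu_j e_j$ with $\sum \mu_j^2 = 1$, one obtains a triangular system relating the $\lambda_i$ to the $\mu_j$ that can be solved recursively from $\lambda_d$ down to $\lambda_1$, yielding an a priori bound $\sum_i |\lambda_i| \le C$ for some constant $C = C(c_0,d)$. A careful accounting makes $C$ grow roughly like $(1+c_0^{-1})^d$, but this is harmless since both $c_0$ and $d$ are fixed.

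Finally, applying this bound to a unit normal $n$ of $W$, the identity
$$1 = \langle n,n\rangle = \sum_i \lambda_i \langle u_i,n\rangle$$
gives $\max_i |\langle u_i,n\rangle| \ge 1/(dC)$, so the lemma holds with $c_1 := 1/(dC)$. The only mildly delicate point is the triangular inversion estimate on the $\lambda_i$, but this is a standard recursion and presents no substantive obstacle.
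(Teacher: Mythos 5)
Your proof is correct, but it takes a genuinely different route from the paper's. The paper argues by compactness: it observes that the function $\varphi(u_1,\dots,u_d,W)=\max_i d(u_i,W)$ is continuous on the compact set $E_{c_0}\times\mathcal{G}$ (the set of $d$-tuples of unit vectors satisfying (\ref{sep}) times the Grassmannian of hyperplanes) and strictly positive there, since (\ref{sep}) forces $(u_i)$ to be a basis; it then takes $c_1$ to be the minimum of $\varphi$. Your argument is instead effective: the reduction to hyperplanes via a unit normal $n$, the Gram--Schmidt triangularization with diagonal entries bounded below by $c_0$, the recursive triangular inversion bounding $\sum_i|\lambda_i|$, and the identity $1=\sum_i\lambda_i\langle u_i,n\rangle$ together yield an explicit $c_1\gtrsim (1+c_0^{-1})^{-d}$. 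Both are valid; the paper's compactness argument is shorter and suffices for its purposes (the constant $c_1$ is never needed quantitatively downstream), while your approach produces an explicit value of $c_1$ at the cost of the triangular-inversion bookkeeping. One small point worth making explicit in your write-up: for $i=1$ the hypothesis (\ref{sep}) says nothing, so the corresponding diagonal entry of the Gram--Schmidt matrix is $\|u_1\|=1$ rather than something controlled by $c_0$; this is harmless (and in fact WLOG $c_0\le 1$, since otherwise (\ref{sep}) has no solutions), but should be noted.
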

\begin{proof}
Given a $d$-tuple $(u_i)$ of elements of $\g$ and a hyperplane $W<\g$, we define
$$\varphi(u_1,\dots,u_d,W) = \max_{1\leq i\leq d} d(u_i,W).$$
The map $\varphi$ is continuous, and strictly positive whenever $(u_i)$ is a basis for $\g$. The set $E_{c_0}$ of $d$-tuples $(u_i)$ of unit vectors satisfying~(\ref{sep}) is compact, and so is the Grassmannian variety $\mathcal{G}$ of hyperplanes of $\g$.
This proves the lemma, with constant $c_1$ equal to the minimal value of $\varphi$ on the compact set $E_{c_0}\times \mathcal{G}$.
\end{proof}

For simplicity, if $\Omega$ is a subset of the Cartesian product set $G^{\times n}$, we will say that a set $\Omega'$ \emph{comes from} $\Omega$ if there exist indices $i_1<i_2<\dots<i_r$ in $\{1,\dots,n\}$ and elements $x_{i_1},\dots,x_{i_r}$ such that
$$\Omega' \subset \left\{\left.(x_i)_{\substack{1\leq i\leq n\\ i\not\in\{i_1,\dots,i_r\}}}
\,\right|\, (x_i)_{1\leq i\leq n} \in \Omega\right\}.$$
We now turn to the proof of Proposition~\ref{outofxi}.

\begin{proof}[Proof of Proposition~\ref{outofxi}]
The neighborhood $U$ is chosen such that Lemma~\ref{dfirst} holds.\\
Recall from Proposition~\ref{troublemakers} that there exists a finite family $(C_i)_{1\leq i\leq \delta^{-O(\epsilon)}}$ of coset chunks in $U$ such that
\begin{equation}\label{xismall}
\Xi \subset \bigcup_{1\leq i\leq\delta^{-O(\epsilon)}} C_i^{(\delta^\eta)}.
\end{equation}
Let $\Omega$ be a subset of the Cartesian product $A^{\times n}$ such that $N(\Omega,\delta)\geq \delta^{\epsilon}N(A,\delta)^n$, and assume for a contradiction that for all $\pi$ in $\Pi$,
\begin{equation}\label{inxi}
\pi(\Omega) \subset \Xi.
\end{equation}
For $s\in\{0,\dots,d-1\}$, we let $n_s= d^{d-s}$ and $\eta_s=b^s\eta$.
To reach a contradiction, we apply Lemma~\ref{dfirst} inductively. We decompose the reasoning into (at most) $d$ steps.\\
\underline{Step 0}\\
Just using the inclusion (\ref{inxi}) for all projections on the coordinates, and recalling that $\Xi$ is controlled by (\ref{xismall}), we see that $\Omega$ is included in a union of at most $\delta^{-O(\epsilon)}$ sets of the form $C_1^{(\delta^\eta)}\times\dots\times C_n^{(\delta^\eta)}$, where the $C_i$'s are proper coset chunks in $U$. By the pigeonhole principle, there must exist coset chunks $C_{01},\dots,C_{0n}$ of dimension at most $d-1$ and a set $\Omega_0\subset\Omega$ such that
\begin{enumerate}
\item $N(\Omega_0,\delta) \geq \delta^{O(\epsilon)}N(\Omega,\delta)$
\item $\Omega_0 \subset C_{01}^{(\delta^\eta)}\times\dots\times C_{0n}^{(\delta^\eta)}$
\end{enumerate}
\underline{Step $s+1$, $s\geq 0$}\\
Suppose we have constructed a set $\Omega_s\subset A^{\times n_s}$ coming from $\Omega$, and coset chunks $C_{s 1},\dots,C_{s n_s}$ of dimension at most $d-1-s$ and at least $1$ such that
\begin{enumerate}
\item $N(\Omega_s,\delta) \geq \delta^{O(\epsilon)}N(A,\delta)^{n_s}$
\item $\Omega_s \subset C_{s 1}^{(\delta^{\eta_s})}\times\dots\times C_{s n_s}^{(\delta^{\eta_s})}$
\end{enumerate}
For each $i\in\{1,\dots,n_s\}$, write $C_{s i}=g_iH_i$ for some subgroup chunk $H_i$ and some element $g_i$ in $U$.
By assumption on the family $\{a_i\}$, there exists a constant $c_0>0$ such that for all unit vector $v\in\g$ and all hyperplane $W<\g$, there exists an element $a_i$ such that $d((\Ad a_i)v, W)\geq c_0$.
This allows us to choose $a_1,\dots,a_d$ among the $a_i$'s so that condition~(\ref{tis}) of Lemma~\ref{dfirst} is satisfied (for some constant $c_0$ depending only on the set of parameters $\{a_i\}$). Denote by $\pi$ the associated projection.
From the inclusions $\pi(\Omega_s)\subset\Xi$ and (\ref{xismall}), we see by the pigeonhole principle that there exists a coset chunk $C=gH$ in $U$ and a subset $\Omega_s'\subset\Omega_s$ such that $N(\Omega_s',\delta)\geq \delta^{O(\epsilon)}N(\Omega_s,\delta)$ and $\pi(\Omega_s')\subset C^{(\delta^\eta)}$.\\
We now apply Lemma~\ref{dfirst} to $\Omega_s'$, at scale $\rho=\delta^{\eta_s}$, and get a set $\Omega_{s  1}\subset A^{\times n_s-d+1}$ coming from $\Omega_s$ and a coset chunk $C_{(s+1)1}$ in $U$ of dimension at most $d-2-s$ such that
\begin{enumerate}
\item $N(\Omega_{s 1},\delta) \geq \delta^{O(\epsilon)}N(A,\delta)^{n_s-d+1}$
\item $\Omega_{s 1} \subset C_{(s+1)1}^{(\delta^{\eta_{s+1}})}\times C_{s (d+1)}^{(\delta^{\eta_s})}\dots\times C_{s n_s}^{(\delta^{\eta_s})}$
\end{enumerate}
Repeating this argument with the next $d$ coordinates, and then again with the $d$ following, etc., we finally get a set $\Omega_{s+1}$ coming from $\Omega_s$ and included in the Cartesian product $A^{\times n_{s+1}}$, and coset chunks $C_{(s+1)1},\dots,C_{(s+1)n_{s+1}}$ of dimension at most $d-2-s$ in $U$ such that
\begin{enumerate}
\item $N(\Omega_{s+1},\delta) \geq \delta^{O(\epsilon)}N(A,\delta)^{n_{s+1}}$
\item $\Omega_{s+1} \subset C_{(s+1)1}^{(\delta^{\eta_{s+1}})}\times\dots\times C_{(s+1)n}^{(\delta^{\eta_{s+1}})}$
\end{enumerate}
As the dimensions of the coset chunks $C_{s i}$ are bounded above by $d-s-1$, we must obtain, for some $s\leq d-1$ and some $i\in\{1,\dots,n_s\}$ that $\dim C_{s i}=0$.
In other terms, the set $C_{s i}$ is reduced to a point, so that the projection $S$ of $\Omega_s$ on its $i$-th coordinate is included in a ball of radius $\delta^{\eta_s}$.
By construction, $S$ is included in $A$, so that recalling that $A$ is a $(\sigma,\epsilon)$-set at scale $\delta$, we find
$$N(S,\delta) \leq \delta^{\sigma\eta_s-\epsilon}N(A,\delta).$$
However, from the lower bound $\delta^{O(\epsilon)}N(A,\delta)^{n_s}$ on the cardinality of $\Omega_s$, it is readily seen that
$$N(S,\delta) \geq \delta^{O(\epsilon)}N(A,\delta),$$
which yields the desired contradiction, provided $\epsilon$ has been chosen small enough.
\end{proof}

Let $n=d^d$, and $\Pi$ be the set of projections $G^{\times n}\rightarrow G$ as defined in Proposition~\ref{outofxi}. Let $N$ denote the cardinality of $\Pi$, and consider the map
\begin{equation}\label{ww}
\begin{array}{lccc}
w: & G^{\times n+N+1} & \rightarrow & G\\
& (x_1,\dots,x_n,y_0,\dots,y_N) & \mapsto & y_0\pi_1(x_1,\dots,x_n)y_1\dots y_{N-1}\pi_{N}(x_1,\dots,x_n)y_N
\end{array}
\end{equation}

Proposition~\ref{outofxi} has the following corollary on expansion of $(\sigma,\epsilon)$-sets in the simple Lie group $G$.

\begin{corollary}\label{finalcombinatorial}
Let $G$ be a simple Lie group.
There exists a neighborhood $U$ of the identity such that, given $\sigma\in(0,d)$, there exists $\epsilon=\epsilon(\sigma)>0$ such that the following holds.\\
Suppose $(a_i)$ is a family of elements of $U$ satisfying the conclusion of Lemma~\ref{parameters}, and let $w:G^{\times n+N+1}\rightarrow G$ be the associated map, as defined above.\\
For all $\delta>0$ sufficiently small, if $A\subset U$ is a $(\sigma,\epsilon)$-set at scale $\delta$ and $\Omega$ is a subset of the Cartesian product $A^{\times n+N+1}$ satisfying $N(\Omega,\delta)\geq \delta^\epsilon N(A,\delta)^{n+N+1}$, then
$$N(w(\Omega),\delta) \geq \delta^{-\epsilon}N(A,\delta).$$
\end{corollary}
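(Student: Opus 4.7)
I would argue by contradiction, assuming $N(w(\Omega),\delta) < \delta^{-\epsilon}N(A,\delta)$, and locate an element $\xi$ in $G$ that is simultaneously outside of $\Xi$ (via Proposition~\ref{outofxi}) and yet witnesses its own membership in $\Xi$. The $\epsilon$ in the statement is to be chosen much smaller than the threshold $\epsilon_1(\sigma)$ of Proposition~\ref{outofxi}, so that the constant losses from the two pigeonhole steps and the bi-Lipschitz translation below can all be absorbed.

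First I would extract a good value $x^* \in A^n$. For each $x \in A^n$, set $\Omega_x = \{(y_0,\dots,y_N)\in A^{N+1} : (x,y_0,\dots,y_N)\in \Omega\}$. Working with maximal $\delta$-separated subsets, a Fubini-type averaging shows that the set
$$X := \bigl\{x\in A^n : N(\Omega_x,\delta) \geq \tfrac{1}{2}\delta^\epsilon N(A,\delta)^{N+1}\bigr\}$$
satisfies $N(X,\delta) \geq \tfrac{1}{2}\delta^\epsilon N(A,\delta)^n$. Since any $(\sigma,\epsilon)$-set is also a $(\sigma,2\epsilon)$-set, one may apply Proposition~\ref{outofxi} with parameter $2\epsilon$ (which I arrange to be less than $\epsilon_1(\sigma)$) to find some projection $\pi_k\in \Pi$ and some $x^* \in X$ such that $\xi := \pi_k(x^*) \notin \Xi$.

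Next I isolate the block containing $\xi$. For $x^*$ fixed, decompose
$$w(x^*,y) = \bigl[y_0\pi_1(x^*)y_1\cdots \pi_{k-1}(x^*)\bigr]\cdot y_{k-1}\cdot \xi\cdot y_k\cdot \bigl[\pi_{k+1}(x^*)y_{k+1}\cdots \pi_N(x^*)y_N\bigr].$$
A second averaging, freezing the $N-1$ remaining variables $y_j$ for $j\notin\{k-1,k\}$ at values $y_j^*$ that maximize the resulting fiber of $\Omega_{x^*}$, yields a subset $\Omega^*\subset A\times A$ with $N(\Omega^*,\delta)\geq \tfrac{1}{2}\delta^\epsilon N(A,\delta)^2$ and two fixed elements $c_1, c_2$ (each a bounded product living in some fixed power of $U$) such that $w(x^*,y) = c_1\,y_{k-1}\,\xi\,y_k\,c_2$ on $\Omega^*$. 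Left- and right-multiplication by $c_1, c_2$ being $O(1)$-bi-Lipschitz, the hypothesized bound on $N(w(\Omega),\delta)$ transfers to
$$N(\pi_\xi(\Omega^*),\delta)\leq C\,\delta^{-\epsilon}N(A,\delta).$$

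For $\delta$ small enough, these two estimates become $N(\Omega^*,\delta)\geq \delta^{2\epsilon}N(A,\delta)^2$ and $N(\pi_\xi(\Omega^*),\delta)\leq \delta^{-2\epsilon}N(A,\delta)$, exhibiting $\xi$ as a troublemaker at exponent $2\epsilon$ and contradicting $\xi\notin\Xi$. The hardest part of this plan, modest as it is, is keeping track of how the two pigeonhole losses and the bi-Lipschitz constants compound, and choosing $\epsilon$ at the very end so that the inequalities close up. A cosmetic nuisance is that $\pi_k(x^*)$ need not literally lie in the neighborhood $U$ originally used to define $\Xi$; this is handled by shrinking $U$ so that all products of at most $n+N+1$ elements of $A$ remain in a fixed slightly larger neighborhood, taken as the ambient domain for $\Xi$.
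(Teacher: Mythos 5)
Your proof is correct and follows essentially the same route as the paper: both extract by Fubini/pigeonhole a good $x$ with large fiber, apply Proposition~\ref{outofxi} to locate $\xi=\pi_k(x^*)\notin\Xi$, freeze the $y$-variables not adjacent to the block containing $\xi$ to produce a dense $\Omega^*\subset A\times A$, and close the argument through the definition of $\Xi$ together with a bi-Lipschitz translation. The only differences are cosmetic -- you argue by contradiction where the paper argues directly -- and your remark about enlarging the ambient domain so that $\pi_k(x^*)$ lies where $\Xi$ is defined is a legitimate (and necessary) housekeeping point that the paper handles implicitly through its choice of $U$.
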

\begin{proof}
For a $n$-tuple $x=(x_1,\dots,x_n)$ of elements of $U$, we denote
$$\Omega_x = 
\{y=(y_0,\dots,y_N) \,|\, (x_1,\dots,x_n,y_0,\dots,y_N)\in\Omega\}.$$
Let
$$\Omega'=\{(x,y)\in\Omega \,|\, N(\Omega_x,\delta) \geq \frac{\delta^\epsilon}{2}N(A,\delta)^{N+1}\}.$$
One has
$$\delta^\epsilon N(A,\delta)^{n+N+1} \leq N(\Omega,\delta)
\leq N(\Omega',\delta) + N(A,\delta)^n\frac{\delta^\epsilon}{2}N(A,\delta)^{N+1},$$
so that
$$N(\Omega',\delta) \geq \frac{\delta^\epsilon}{2}N(A,\delta)^{N+n+1}.$$
This shows that we may assume without loss of generality that for all $x$ in the projection of $\Omega$ onto the first $n$ coordinates,
$$N(\Omega_x,\delta) \geq \delta^{\epsilon}N(A,\delta)^{N+1}.$$
Now, provided $\epsilon$ is small enough, we may apply Proposition~\ref{outofxi} to the projection of $\Omega$ to the first $n$ coordinates, and we obtain an index $i\in\{1,\dots,N\}$ and $x^0=(x_1^0,\dots,x_n^0)$ such that
$$\xi = \pi_i(x_1^0,\dots,x_n^0) \not\in \Xi.$$
As $N(\Omega_{x^0},\delta) \geq \delta^\epsilon N(A,\delta)^{N+1}$, we may find elements $y_j^0$, $j\not\in\{i-1,i\}$ such that denoting
$$\Omega_1=\{(y_{i-1},y_i) \,|\, (x_1^0,\dots,x_n^0,y_1^0,\dots,y_{i-2}^0,y_{i-1},y_i,y_{i+1}^0,\dots,y_N^0)\in\Omega\},$$
we have
$$N(\Omega_1,\delta) \geq \delta^\epsilon N(A,\delta)^2.$$
By definition of the set $\Xi$ of troublemakers, $\xi\not\in\Xi$ implies that
$$N(\pi_\xi(\Omega_1),\delta) \geq \delta^{-\epsilon}N(A,\delta),$$
where $\pi_\xi:(x,y)\mapsto x\xi y$.
However, it is readily seen that for some elements $u$ and $v$ in $U^L$ (where $L$ is the total length of the word $w$), we have $u\pi_\xi(\Omega_1)v \subset w(\Omega)$, and therefore,
$$N(w(\Omega),\delta) \gg \delta^{-\epsilon} N(A,\delta).$$
\end{proof}

\section{Flattening and dimension increment}
\label{section:flattening}

It is now time to translate the combinatorial results of the previous section into statements about measures, and in turn, about Hausdorff dimension of product sets.

\begin{definition}
A Borel probability measure on the Lie group $G$ is called \emph{$\sigma$-Frostman} if it satisfies, for all $\delta>0$ sufficiently small, and all $x$ in $G$,
$$\mu(B(x,\delta)) \leq \delta^\sigma.$$
\end{definition}

The importance of this definition lies in the following lemma (see Mattila~\cite[Chapter~8]{mattila}).

\begin{lemma}[Frostman's Lemma]
Let $G$ be a Lie group of dimension $d$, and $\sigma\in (0,d)$.
\begin{itemize}
\item Suppose $\mu$ is a $\sigma$-Frostman measure on $G$, and $A$ is a Borel subset of $G$ such that $\mu(A)>0$. Then $\dim_H A\geq \sigma$.
\item Conversely, if $A$ is a Borel subset of $G$ satisfying $\dim_H A >\sigma$, then there exists a $\sigma$-Frostman measure $\mu$ whose support in included in $A$.
\end{itemize}
\end{lemma}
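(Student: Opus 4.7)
The first implication, usually called the \emph{mass distribution principle}, is direct. For any countable cover $\{B_i\}$ of $A$ by balls of radius $r_i\leq\delta_0$, the Frostman hypothesis gives
$$0<\mu(A)\leq \sum_i \mu(B_i) \leq \sum_i r_i^\sigma,$$
so $\mathcal{H}^\sigma(A)\geq\mu(A)>0$ and therefore $\dim_H A\geq\sigma$.

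For the converse, the plan is to first replace $A$ by a compact subset $K$ of positive $\sigma$-dimensional Hausdorff content $\mathcal{H}^\sigma_\infty(K)$: such a $K$ exists by inner regularity of Borel sets, since $\dim_H A>\sigma$ forces $\mathcal{H}^\sigma(A)=+\infty$. I would then work inside an exponential chart of $G$, where the Riemannian distance is bi-Lipschitz equivalent to a Euclidean distance on a bounded open subset of $\R^d$. In this flat model I would carry out the classical dyadic stopping-time construction: at generation $n$, place mass $2^{-\sigma n}$ on each dyadic cube of side $2^{-n}$ meeting $K$; then walk up the dyadic tree scale by scale, and whenever a cube $Q$ of side $2^{-k}$ has accumulated total mass exceeding $2^{-\sigma k}$, rescale the restriction of the current measure to $Q$ so that its mass drops back to exactly $2^{-\sigma k}$. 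The resulting measure $\mu_n$ is supported in $K^{(2^{-n})}$ and satisfies, by construction, the dyadic Frostman bound $\mu_n(Q)\leq 2^{-\sigma k}$ for every dyadic cube $Q$ of side $2^{-k}$.

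The crux is that the total mass of $\mu_n$ is bounded below by a universal multiple of $\mathcal{H}^\sigma_\infty(K)>0$: this is a duality (min-cut / max-flow on the dyadic tree) between efficient coverings of $K$ and admissible mass distributions on it, and it is the only non-routine step in the argument. Once that uniform lower bound on mass is in hand, a vague-limit argument extracts a nonzero Borel measure $\mu$ supported in $K$; the dyadic Frostman bound on closed cubes survives the weak limit, and a routine comparison between dyadic cubes and Euclidean balls upgrades it to $\mu(B(x,\delta))\leq C\delta^\sigma$ for a dimensional constant $C$. Normalizing $\mu/C$ to a probability measure and transporting back through the chart yields the desired $\sigma$-Frostman probability measure with support contained in $A$. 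I expect the main obstacle to be precisely the max-flow / min-cut lower bound for the total mass of $\mu_n$; every remaining step is standard weak-compactness or bookkeeping between charts, cubes, and metric balls.
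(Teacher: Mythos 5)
The paper offers no proof of this lemma --- it is invoked as a background result with a pointer to Mattila, Chapter~8 --- and your sketch is precisely the standard dyadic Frostman construction found there: pass to a compact $K\subset A$ of positive $\sigma$-content, spread mass $2^{-\sigma n}$ over the dyadic cubes of side $2^{-n}$ meeting $K$, sweep up the tree truncating any overweight cube back to mass $(\mathrm{side})^\sigma$, bound the total mass of $\mu_n$ from below by a multiple of $\mathcal{H}^\sigma_\infty(K)$ using that the maximal saturated cubes form a disjoint cover of $K$, take a vague limit, and pass from closed dyadic cubes to metric balls. That is correct, and your identification of the total-mass lower bound as the one non-routine step of the construction is apt. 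The only point you undersell is the very first reduction: for a Borel set $A$ with $\mathcal{H}^\sigma(A)=\infty$, extracting a compact $K\subset A$ with $\mathcal{H}^\sigma(K)>0$ is not a generic inner-regularity fact about Borel measures but the Besicovitch--Davies \enquote{subsets of finite measure} theorem (Mattila, Theorem~8.19); presentations that avoid invoking it run the whole argument with a dyadic net measure, for which the compact exhaustion is elementary. Beyond that citation issue, nothing is missing.
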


The goal of this section is to prove the following Flattening Lemma, in the spirit of Bourgain-Gamburd \cite[Proposition~1]{bourgaingamburdsu2}.

\begin{lemma}\label{flattening}
Let $G$ be a connected simple Lie group of dimension $d$.
There exists a neighborhood $U$ of the identity in $G$ such that, given $\sigma\in(0,d)$, there exists $\epsilon_1=\epsilon_1(\sigma)>0$ such that the following holds.\\
Suppose $\{a_i\}$ is a family of elements of $U$ satisfying the conclusion of Lemma~\ref{parameters}, and let $w:G^{\times p}\rightarrow G$ be the associated map, as defined in (\ref{ww}).\\
If $\mu$ is a $\sigma$-Frostman finite measure supported on $U$ and $\nu$ is the pushforward of $\mu^{\otimes p}$ under the map $w$, then $\nu*\nu$ is $(\sigma+\epsilon_1)$-Frostman.\\
Moreover, $\epsilon_1$ is bounded away from $0$ if $\sigma$ varies in a compact subset of $(0,d)$.
\end{lemma}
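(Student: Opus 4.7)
The plan is to argue by contradiction, reducing the failure of flatness to a violation of the combinatorial expansion Corollary~\ref{finalcombinatorial}.

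Assume for contradiction that $\nu*\nu$ is not $(\sigma+\epsilon_1)$-Frostman: there exist $x_0\in G$ and arbitrarily small $\delta>0$ with $(\nu*\nu)(B(x_0,\delta))>\delta^{\sigma+\epsilon_1}$. Unfolding the definitions of pushforward and convolution gives
\[
\mu^{\otimes 2p}\bigl(\{(\mathbf{x},\mathbf{y})\in U^{2p}:w(\mathbf{x})w(\mathbf{y})\in B(x_0,\delta)\}\bigr)>\delta^{\sigma+\epsilon_1}.
\]
By Fubini, there is $\mathbf{y}_0\in U^p$ (realising the average) such that, setting $x_1:=x_0 w(\mathbf{y}_0)^{-1}$, one has $\nu(B(x_1,O(\delta)))>\delta^{\sigma+\epsilon_1}$. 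It thus suffices to show that $\nu$ itself is $(\sigma+\epsilon_1)$-Frostman, which reduces the problem to a single application of $w$ rather than the compound $w\cdot w$.

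\textbf{Step 1 (regularisation of $\mu$).} Apply a standard dyadic pigeonholing to the Frostman measure $\mu$: partition the support of $\mu$ by dyadic levels of the local density $\mu(B(\cdot,\delta))$ and by local packing numbers. This yields a level $t\leq\delta^\sigma$ and a subset $A\subset U$ such that $\mu(B(x,\delta))\asymp t$ for $x\in A$, $\mu(A)\gtrsim(\log 1/\delta)^{-O(1)}$, and $A$ is a $(\sigma_0,O(\epsilon_1))$-set at scale $\delta$ with $\sigma_0\leq\sigma$ and $N(A,\delta)\asymp 1/t$.

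\textbf{Step 2 (combinatorial translation).} Transferring the failure of Frostman from $\mu$ to $\mu|_A$ (which loses at most a poly-logarithmic factor) and using that $\mu|_A$ is essentially uniform at scale $\delta$, one extracts a set $\Omega\subset A^{\times p}$ with $w(\Omega)\subset B(x_1,O(\delta))$ and
\[
N(\Omega,\delta)\geq\delta^{\sigma+O(\epsilon_1)}N(A,\delta)^p.
\]

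\textbf{Step 3 (dimension descent -- main obstacle).} The size of $\Omega$ falls short, by a factor of roughly $\delta^{\sigma}$, of the threshold $\delta^{\epsilon(\sigma_0)}N(A,\delta)^p$ required by Corollary~\ref{finalcombinatorial}, whenever $\sigma$ is bounded away from $0$. To bridge this gap, invoke a non-commutative Balog--Szemer\'edi--Gowers descent on $\Omega$ (cf.\ step~\underline{A} in the proof of Proposition~\ref{troublemakers}): the fact that many $\mathbf{x}\in\Omega$ map under $w$ into a single $\delta$-ball forces the slices of $\Omega$ to carry approximate-subgroup structure. Combining this with Lemma~\ref{niceh} produces a $(\sigma_1,O(\epsilon_1))$-set $A^*\subset A^{(O(\delta))}$ of effectively lower dimension $\sigma_1<\sigma_0$ and an enriched subset $\Omega^*\subset(A^*)^{\times p}$ with $w(\Omega^*)\subset B(x_1,O(\delta))$ and
\[
N(\Omega^*,\delta)\geq\delta^{\epsilon(\sigma_1)}N(A^*,\delta)^p.
\]

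\textbf{Step 4 (contradiction).} Applying Corollary~\ref{finalcombinatorial} at parameter $\sigma_1$ to $\Omega^*\subset(A^*)^{\times p}$ yields
\[
N(w(\Omega^*),\delta)\geq\delta^{-\epsilon(\sigma_1)}N(A^*,\delta)\gtrsim\delta^{-\sigma_1+O(\epsilon_1)},
\]
which tends to infinity as $\delta\to 0$. But by construction $w(\Omega^*)$ is contained in a single ball of radius $O(\delta)$, so $N(w(\Omega^*),\delta)=O(1)$. For $\delta$ small enough and $\epsilon_1$ chosen sufficiently small in terms of $\sigma$ and of the uniform lower bound on $\epsilon(\sigma_1)$ (coming from the compactness claim in Corollary~\ref{finalcombinatorial}), these are incompatible, which is the desired contradiction.

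The principal difficulty lies entirely in Step~3: the naive discretisation furnishes a subset of $A^{\times p}$ whose relative density $\delta^{\sigma+\epsilon_1}$ is far below the combinatorial threshold $\delta^{\epsilon(\sigma_0)}$. The Balog--Szemer\'edi--Gowers descent, together with the approximate-subgroup control of Lemma~\ref{niceh}, is needed to extract an enriched regular substructure of smaller effective Frostman dimension on which the combinatorial regime of Corollary~\ref{finalcombinatorial} genuinely applies. Once this descent is in place, Steps~1, 2 and~4 are mechanical.
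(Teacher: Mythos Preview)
Your Step~3 is where the argument breaks down, and the gap is structural rather than technical. The constraint $w(\Omega^*)\subset B(x_1,O(\delta))$, combined with the injectivity and bounded distortion of each partial map $x_\ell\mapsto w(\dots,x_\ell,\dots)$ (Lemma~\ref{nbhd}), forces every one-variable slice of $\Omega^*$ to have $\delta$-covering number $O(1)$, whence $N(\Omega^*,\delta)\ll N(A^*,\delta)^{p-1}$. Together with your density requirement $N(\Omega^*,\delta)\geq\delta^{\epsilon(\sigma_1)}N(A^*,\delta)^p$ this gives $N(A^*,\delta)\ll\delta^{-\epsilon(\sigma_1)}$, i.e.\ $\sigma_1\lesssim\epsilon(\sigma_1)$. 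But the constant $\epsilon(\sigma)$ of Corollary~\ref{finalcombinatorial} is only bounded away from zero on compact subsets of $(0,d)$ and is in any case much smaller than $\sigma$; no admissible $\sigma_1>0$ exists. Your appeal to Lemma~\ref{niceh} to ``lower the dimension'' is also a misreading: that lemma places an approximate subgroup inside a subgroup chunk but \emph{preserves} the $(\sigma,O(\epsilon))$-set exponent (this is exactly step~\underline{B} in the proof of Proposition~\ref{troublemakers}); it does not manufacture a set of smaller Frostman dimension. In short, no Balog--Szemer\'edi--Gowers descent can reconcile ``image in a single $\delta$-ball'' with ``full relative density in $(A^*)^{\times p}$''.

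The paper sidesteps this obstacle by never aiming at a single ball. From $\nu*\nu(B(x,\delta))\geq\delta^{\sigma+\epsilon}$, Cauchy--Schwarz gives $\|\nu_\delta\|_2^2\gg\delta^{-d+\sigma+\epsilon}$ (your reduction to $\nu$ alone discards precisely this $L^2$ information). One decomposes $\mu_\delta$ into dyadic level sets $A_i$ and pigeonholes in the expansion of $\|\nu_\delta\|_2$ to select indices $i_1,\dots,i_p$; the crucial input is then the Jacobian bound (Lemmas~\ref{jacobian} and~\ref{nbhd}), applied once for each coordinate, which forces \emph{simultaneously} $2^{i_\ell}=\delta^{-d+\sigma+O(\epsilon)}$ and $2^{i_\ell}|A_{i_\ell}|=\delta^{O(\epsilon)}$ for every $\ell$. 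Hence each $A_{i_\ell}$ is a $(\sigma,O(\epsilon))$-set of size $\delta^{-\sigma+O(\epsilon)}$. Taking $E=\{\varphi\geq\tfrac12\|\varphi\|_2^2\}$ for $\varphi$ the density of $w_*(\bigotimes_\ell 2^{i_\ell}\mathbbm{1}_{A_{i_\ell}})$ and $\Omega=w^{-1}(E)$, one gets $N(E,\delta)\leq\delta^{-\sigma-O(\epsilon)}\approx N(A,\delta)$ (not $O(1)$) while $\int_E\varphi\geq\delta^{O(\epsilon)}$ yields $N(\Omega,\delta)\geq\delta^{O(\epsilon)}N(A,\delta)^p$, i.e.\ full relative density. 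Corollary~\ref{finalcombinatorial} then applies directly, with no descent, and contradicts the upper bound on $N(E,\delta)$. The $L^2$/Jacobian mechanism is exactly what matches the measure-theoretic hypothesis to the combinatorial input; your Step~3 tries to manufacture this match by a dimension descent that cannot exist.
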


From the flattening lemma, it is easy to prove the results announced in the introduction:

\begin{theorem}
Let $G$ be a connected simple Lie group of dimension $d$. There exists a neighborhood $U$ of the identity in $G$ and a positive integer $k$ such that given $\sigma>0$, there exists $\epsilon=\epsilon(\sigma)>0$ such that if $A\subset U$ is any Borel measurable topologically generating set of Hausdorff dimension $\alpha\in[\sigma,d-\sigma]$ then
$$\dim_H A^k \geq \epsilon+\dim_H A.$$
\end{theorem}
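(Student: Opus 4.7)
The plan is to derive the theorem from the Flattening Lemma~\ref{flattening} by a classical application of Frostman's Lemma. With $s=d-1$ the integer from Lemma~\ref{parameters} and $U_F$ the neighborhood furnished by Lemma~\ref{flattening}, I take the neighborhood $U$ of the theorem to be a symmetric neighborhood of the identity satisfying $U^s\subset U_F$. Given a Borel set $A\subset U$ topologically generating $G$ (we may assume $1\in A$, since adjoining the identity does not affect the Hausdorff dimension), Lemma~\ref{parameters} yields a family $(a_i)$ in $A^s\subset U_F$ satisfying the escape property required in Lemma~\ref{flattening}. The associated word map $w:G^{\times p}\to G$ of~(\ref{ww}) sends $A^{\times p}$ into $A^L$, where $L$ is the total length of $w$ once each occurrence of an $a_i$ is expanded as a product of $s$ elements of $A$; this integer $L$ depends only on $d$, and I set $k=2L$.

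Given $A$ with $\alpha:=\dim_H A\in[\sigma,d-\sigma]$, let $c=c(\sigma)>0$ be a uniform lower bound for $\epsilon_1$ of Lemma~\ref{flattening} on the compact subinterval $[\sigma/2,d-\sigma/2]\subset(0,d)$, and set $\eta:=\min(c/2,\sigma/4)>0$ and $\beta:=\alpha-\eta$. Then $\beta\in[\sigma/2,d-\sigma/2]$ and $\beta<\alpha$, so by the converse direction of Frostman's Lemma there exists a $\beta$-Frostman probability measure $\mu$ supported in $A$. Pushing $\mu^{\otimes p}$ forward through $w$ gives a probability measure $\nu$ concentrated on $w(A^{\times p})\subset A^L$, and Lemma~\ref{flattening} declares $\nu*\nu$ to be $(\beta+\epsilon_1(\beta))$-Frostman. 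By the choice of $\eta$, we have $\beta+\epsilon_1(\beta)\geq\alpha-\eta+c\geq\alpha+\eta$.

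To conclude, observe that $\nu*\nu$ is a probability measure charging the analytic set $A^k=A^{2L}$, so the first direction of Frostman's Lemma---extended to analytic sets by inner regularity through compact subsets---gives
$$\dim_H A^k\geq\beta+\epsilon_1(\beta)\geq\alpha+\eta,$$
which is the desired inequality with $\epsilon(\sigma):=\eta$. The substance of the argument has already been absorbed into the Flattening Lemma, so what remains here is bookkeeping; the only minor care needed is the passage from Borel sets to analytic ones in the forward direction of Frostman's Lemma, which follows from the fact that any analytic set of positive mass for a finite Borel measure on a Polish space contains a compact subset of positive mass.
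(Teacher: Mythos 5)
Your argument is correct and follows essentially the same route as the paper's own proof: use the converse of Frostman's Lemma to obtain a nearly-$\alpha$-Frostman probability measure supported on $A$, push it through the word map $w$ and convolve, invoke the Flattening Lemma~\ref{flattening}, and read off the dimension gain via the forward direction of Frostman's Lemma. Your write-up is in fact somewhat more careful than the paper's on the bookkeeping (shrinking $U$ so that $A^s$ lands inside the flattening neighborhood, making explicit the uniform lower bound on $\epsilon_1$ over the compact subinterval $[\sigma/2,d-\sigma/2]$, and observing that $A^k$ is analytic rather than Borel).
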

\begin{proof}
Choose a neighborhood $U$ of the identity and $\epsilon_1>0$ such that Lemma~\ref{flattening} holds, and let $\epsilon=\frac{\epsilon_1}{2}$.\\
The set $A$ is topologically generating, so we may choose in a product set $A^s$ a finite collection of elements $\{a_i\}$ satisfying the conclusion of Lemma~\ref{parameters}.\\
By Frostman's Lemma, there exists a Borel probability measure $\mu$ which is $(\alpha-\epsilon)$-Frostman and whose support is included in $A$.\\
Let $\nu$ be the image measure $\nu=w_*(\mu^{\otimes p})$.
All the $a_i$'s are in a product set $A^s$ and the measure $\mu$ is supported on $A$, so there exists an integer $k$ (depending only on $G$) such that $\nu*\nu$ is supported on the product set $A^k$.\\
By Lemma~\ref{flattening}, we know that, provided we have chosen $\epsilon$ small enough, the measure $\nu*\nu$ is $(\alpha+\epsilon)$-Frostman, and this shows that $\dim_H A^k \geq \alpha+\epsilon$.
\end{proof}

As a corollary, we obtain:

\begin{corollary}
Let $G$ be a connected simple real Lie group. Any dense Borel measurable sub-semigroup of $G$ has Hausdorff dimension $0$ or $\dim G$.
\end{corollary}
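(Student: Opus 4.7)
The plan is to argue by contradiction: suppose $S$ is a dense Borel sub-semigroup of $G$ with $\dim_H S = \alpha \in (0,\dim G)$. The strategy is to extract from $S$ a Borel topologically generating subset $A$ of the neighborhood $U$ from the preceding theorem, with $A \subset S$ and $\dim_H A$ close to $\alpha$. Since $S$ is closed under multiplication, the inclusion $A \subset S$ forces $A^k \subset S^k \subset S$, hence $\dim_H A^k \leq \alpha$, which will contradict the theorem's lower bound $\dim_H A^k \geq \dim_H A + \epsilon$.

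To build such an $A$, I would first apply the preceding theorem with $\sigma = \frac{1}{2}\min(\alpha,\dim G-\alpha)$ to obtain $U$, $k$ and $\epsilon = \epsilon(\sigma)>0$, and choose a smaller neighborhood $U_0$ of the identity with $U_0\cdot U_0 \subset U$. Covering $G$ by countably many left translates $g_n U_0$ and invoking the countable stability of Hausdorff dimension, there exists an index $n$ with $\dim_H(S\cap g_n U_0) > \alpha - \epsilon/2$. By density of $S$ in $G$, one can find $s\in S$ with $sg_n \in U_0$, and set $A = s(S\cap g_n U_0)$. The semigroup property gives $A\subset sS\subset S$; the relation $sg_n\in U_0$ yields $A\subset sg_nU_0\subset U_0^2\subset U$; and since left translation is an isometry for the left-invariant metric, $\dim_H A = \dim_H(S\cap g_n U_0) > \alpha - \epsilon/2$. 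The set $A$ is manifestly Borel.

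The main point that needs to be verified is that $A$ topologically generates $G$. Density of $S$ implies that $S\cap g_n U_0$ is dense in the open set $g_n U_0$, so $A$ is dense in the open set $sg_n U_0$; consequently $AA^{-1}$ is dense in $(sg_n)(U_0 U_0^{-1})(sg_n)^{-1}$, which is a neighborhood of the identity. Hence the closure of the subgroup generated by $A$ is open and, by connectedness of $G$, equal to $G$. The preceding theorem then yields $\dim_H A^k \geq \dim_H A + \epsilon > \alpha + \epsilon/2$, contradicting $\dim_H A^k \leq \alpha$. The only parameter-tracking required is to ensure $\dim_H A \in [\sigma,\dim G - \sigma]$ as demanded by the theorem, which is automatic from the choice $\sigma = \frac{1}{2}\min(\alpha,\dim G-\alpha)$.
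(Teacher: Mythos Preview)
Your argument is correct and is precisely the natural deduction the paper has in mind; the paper itself states the corollary without proof, as an immediate consequence of the preceding theorem. One small remark on the parameter tracking: after obtaining $\epsilon=\epsilon(\sigma)$ from the theorem you should replace it, if necessary, by $\min(\epsilon,\sigma)$ before choosing $n$, so that $\dim_H A>\alpha-\epsilon/2\geq\alpha-\sigma/2\geq\sigma$ is genuinely guaranteed (the theorem's conclusion is monotone in $\epsilon$, so this is harmless).
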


Before we turn to the proof of Lemma~\ref{flattening}, we record the following elementary lemma.

\begin{lemma}\label{jacobian}
Let $\nu$ be a finite measure on a measurable space $T$, let $U$ be an open subset of $\R^d$, and $\mu$ be a Borel measure on $U$ with square integrable density.
Suppose $w:U\times T\rightarrow \R^d$ is a measurable map such that for each $t$ in $T$, the partial application $w_t:u\mapsto w(u,t)$ is injective and differentiable, with Jacobian $J_{w_t}$.
If $C$ is a positive constant such that,
$$\forall t,u, \quad |J_{w_t}(u)|\geq \frac{1}{C},$$
then the measure $w_*(\mu\otimes\nu)$ has square integrable density, and
$$\|w_*(\mu\otimes\nu)\|_2 \leq C^{\frac{1}{2}}\nu(T)\|\mu\|_2.$$
\end{lemma}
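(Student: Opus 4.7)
The plan is to reduce the bound on $\|w_*(\mu\otimes\nu)\|_2$ to a fiberwise computation followed by an application of Minkowski's integral inequality.

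First, I would fix $t\in T$ and bound $\|(w_t)_*\mu\|_2$. Write $f$ for the density of $\mu$ with respect to Lebesgue measure on $U$. Since $w_t$ is injective and differentiable with $|J_{w_t}(u)|\geq \frac{1}{C}$, the change of variables formula shows that $(w_t)_*\mu$ is absolutely continuous on $w_t(U)$, with density
$$g_t(x) = \frac{f(w_t^{-1}(x))}{|J_{w_t}(w_t^{-1}(x))|}.$$
Computing the $L^2$ norm by pulling back through $w_t$,
$$\|(w_t)_*\mu\|_2^2 = \int_{w_t(U)} \frac{f(w_t^{-1}(x))^2}{|J_{w_t}(w_t^{-1}(x))|^2}\,dx = \int_U \frac{f(u)^2}{|J_{w_t}(u)|}\,du \leq C\|\mu\|_2^2,$$
so $\|(w_t)_*\mu\|_2 \leq C^{1/2}\|\mu\|_2$, uniformly in $t$.

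Next, I would disintegrate $w_*(\mu\otimes\nu)$ along the projection onto $T$. For any bounded measurable test function $\varphi$ on $\R^d$, Fubini gives
$$\int \varphi\, d(w_*(\mu\otimes\nu)) = \int_T \left(\int_U \varphi(w(u,t))\,d\mu(u)\right)d\nu(t) = \int_T \left(\int \varphi\, d((w_t)_*\mu)\right)d\nu(t),$$
which identifies $w_*(\mu\otimes\nu)$ with the vector-valued integral $\int_T (w_t)_*\mu\,d\nu(t)$ (in the weak sense, and hence for densities as elements of $L^1_{\mathrm{loc}}$). Applying Minkowski's integral inequality,
$$\|w_*(\mu\otimes\nu)\|_2 \leq \int_T \|(w_t)_*\mu\|_2\,d\nu(t) \leq C^{1/2}\|\mu\|_2\cdot\nu(T),$$
which is the claimed estimate and in particular proves square integrability.

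There is no real obstacle here; the only subtle point is justifying that the integral of fiber measures truly equals $w_*(\mu\otimes\nu)$ as an $L^2$ function rather than merely as a measure, but this follows immediately from the finiteness of the right-hand side together with the identification of both objects as positive linear functionals on continuous compactly supported test functions.
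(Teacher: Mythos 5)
Your proof is correct, and it reaches the same estimate by a route that is close in spirit to the paper's but not identical in the final combining step. The paper writes the density $\theta(z)=\int_T \mathbbm{1}_{\{z\in w_t(U)\}} f(w_t^{-1}(z))\,|J_{w_t^{-1}}(z)|\,d\nu(t)$ directly, applies the Cauchy--Schwarz inequality to the $t$-integral to pull out a factor $\nu(T)$, and only then uses the Jacobian bound together with Fubini and a change of variables. You instead perform the change of variables first, obtaining the uniform fiberwise bound $\|(w_t)_*\mu\|_2\le C^{1/2}\|\mu\|_2$, and then use Minkowski's integral inequality $\bigl\|\int_T g_t\,d\nu(t)\bigr\|_2\le \int_T\|g_t\|_2\,d\nu(t)$ in place of Cauchy--Schwarz. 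Both tools deliver exactly the factor $\nu(T)$, and both orders of operation are legitimate; your variant has the minor advantages that the fiberwise bound is isolated as a reusable statement and that Minkowski's inequality would extend the argument to $L^p$ for any $p\ge 1$ ($\|w_*(\mu\otimes\nu)\|_p\le C^{1/p'}\nu(T)\|\mu\|_p$), whereas the Cauchy--Schwarz argument is tied to $p=2$. The identification of the density of $w_*(\mu\otimes\nu)$ with the pointwise $t$-integral of the fiber densities, which you flag at the end, is indeed automatic since the integrand is nonnegative and jointly measurable, so Tonelli suffices and no vector-valued integration theory is needed.
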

\begin{proof}
Denoting by $f$ the density of $\mu$, it is readily checked that the measure $w_*(\mu\otimes\nu)$ has density $\theta$ given by
$$\theta(z) = \int_T \mathbbm{1}_{\{z\in w_t(U)\}}f(w_t^{-1}(z))|J_{w_t^{-1}}(z)|\,d\nu(t).$$
By Cauchy-Schwarz's inequality, we have
\begin{align*}
\|w_*(\mu\otimes\nu)\|_2^2 & = \int_{\R^d}\left(\int_T \mathbbm{1}_{\{z\in w_t(U)\}}f(w_t^{-1}(z))|J_{w_t^{-1}}(z)|\,d\nu\right)^2\,dz\\
& \leq \nu(T) \int_{\R^d}\int_T \mathbbm{1}_{\{z\in w_t(U)\}}f(w_t^{-1}(z))^2|J_{w_t^{-1}}(z)|^2\,d\nu\,dz
\end{align*}
By assumption, we have for all $t$ and $z$, $|J_{w_t^{-1}}(z)|\leq C$, and therefore, using also Fubini's Theorem and the obvious change of variables,
\begin{align*}
\|w_*(\mu\otimes\nu)\|_2^2 & \leq C\nu(T) \int_{\R^d}\int_T \mathbbm{1}_{\{z\in w_t(U)\}}f(w_t^{-1}(z))^2|J_{w_t^{-1}}(z)|\,d\nu\,dz\\
& = C \nu(T)^2 \|\mu\|_2^2.
\end{align*}
\end{proof}

We will apply the above lemma to the map $w$ defined in (\ref{ww}). By the following lemma, this will be possible, provided we restrict to a suitable neighborhood of the identity.

\begin{lemma}\label{nbhd}
Let $G$ be a simple Lie group.
There exists a neighborhood $U$ of the identity in $G$ and a constant $C$ depending on $G$ only such that the following holds.\\
Suppose $\{a_i\}_{1\leq i\leq k}$  is a finite set of elements of $U$ satisfying the conclusion of Lemma~\ref{parameters}, and let $w:U^p\rightarrow G$ be the corresponding map, defined as in (\ref{ww}).
If $(t_i)_{\substack{1\leq i\leq p\\ i\neq i_0}}$ is any family of elements of $U$, then the partial application
$$w_t: x\mapsto w(t_1,\dots,t_{i_0-1},x,t_{i_0+1},\dots,t_p)$$
is injective on $U$ and its Jacobian satisfies
$$|J_{w_t}| \geq \frac{1}{C}.$$
\end{lemma}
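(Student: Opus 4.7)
The plan is to regard $w$ as a word of some length $L$ in the free variables $x_1,\dots,x_n,y_0,\dots,y_N$ and the fixed constants $a_1,\dots,a_k$, where $L$ depends only on $n = d^d$, $N = \card \Pi$, and the lengths of the projections $\pi_j$, all of which are functions of $d = \dim G$ alone. When all coordinates but one, say $z$, are fixed at values in $U$, the partial application takes the form
$$w_t(z) \;=\; g_0\, z\, g_1\, z\, g_2 \cdots z\, g_m,$$
where $g_0,\dots,g_m \in U^L$ and $m \geq 1$ is the number of times $z$ appears in $w$ (exactly once when $z$ is one of the $y_j$'s, and $m \leq L$ in any case).

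Next I compute the differential of $w_t$. Trivializing the tangent bundle of $G$ by left translation and differentiating with respect to the $i$th occurrence of $z$ in the above expression, one finds that the pulled-back differential of $w_t$ at $z$ is the linear endomorphism of $\g$
$$L_{t,z} : V \longmapsto \sum_{i=1}^m \Ad\bigl(Q_i^{-1}\bigr)\, V, \qquad Q_i = g_i\, z\, g_{i+1} \cdots z\, g_m \in U^L.$$
Since $\Ad$ is continuous with $\Ad(1) = \id_\g$, one can shrink $U$ so that, uniformly over all admissible $t$, $\{a_i\}$, and $z \in U$, every factor $\Ad(Q_i^{-1})$ lies within $\frac{1}{10 L d}$ of $\id_\g$ in operator norm. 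Then $L_{t,z}$ differs from $m \cdot \id_\g$ by at most $\frac{1}{10d}$ in operator norm, so its determinant is at least $(m/2)^d \geq 1/2^d$. As left and right translations are uniformly bi-Lipschitz on the bounded set $U^L$, this yields a uniform lower bound $|J_{w_t}(z)| \geq 1/C$ with $C$ depending only on $G$.

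Injectivity on $U$ follows from a quantitative inverse function argument. Shrinking $U$ further so that $\exp : O \to U$ is a diffeomorphism from some convex neighborhood $O$ of $0$ in $\g$, set $\tilde w_t = \log \circ w_t \circ \exp : O \to \g$. A direct calculation using the preceding bound shows that $D\tilde w_t$ lies within $m/2$ of $m \cdot \id_\g$ throughout $O$, so for any $Z_1, Z_2 \in O$ the mean-value estimate
$$\tilde w_t(Z_1) - \tilde w_t(Z_2) \;=\; \int_0^1 D\tilde w_t\bigl(sZ_1 + (1-s)Z_2\bigr)(Z_1 - Z_2)\, ds$$
gives $\|\tilde w_t(Z_1) - \tilde w_t(Z_2)\| \geq \frac{m}{2}\|Z_1 - Z_2\|$, whence injectivity of $w_t$. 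The only delicate point is uniformity in $t$ and in the particular family $\{a_i\} \subset U$, but since all estimates depend only on the word length $L$ and on the continuity modulus of $\Ad$ near $1$—both determined by $G$—the neighborhood $U$ and the constant $C$ can indeed be taken to depend on $G$ alone.
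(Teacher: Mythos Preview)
Your argument is correct and follows essentially the same approach as the paper: compute the differential of the partial application, observe that when all inputs lie in a small enough neighborhood of the identity it is close to $m\cdot\id_\g$ (the paper writes this as the block $n_{i_0}I$ in the derivative of the full map $\widetilde{w}$ at the identity), and deduce both the Jacobian lower bound and injectivity from that. The paper compresses the last step into a citation of a quantitative Inverse Function Theorem, while you spell out the mean-value estimate in exponential coordinates; the only cosmetic gap is that $w_t$ maps $U$ into $U^L$ rather than $U$, so you should say explicitly that $U$ is chosen small enough that $\log$ is still defined on $U^L$.
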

\begin{proof}
Let $\widetilde{w}$ be the map
$$
\begin{array}{ccc}
G^{k+p} & \rightarrow & G\\
(\{a_i\},(x_i)) & \mapsto & w_{\{a_i\}}(x_1,\dots,x_p).
\end{array}
$$
Since $\widetilde{w}$ is a word in the $a_i$'s and $x_i$'s with only positive exponents, its derivative at the identity has the form
$$(n_1 I \,|\, n_2 I \,|\,\dots\,|\, n_{k+p} I),$$
where the $n_i$'s are positive integers.
The lemma easily follows from this observation, by continuity of the derivative of $\widetilde{w}$ and by a quantitative version of the Inverse Function Theorem (see e.g. \cite[Theorem~2.11]{saxceproducttheorem}).
\end{proof}

For any small scale $\delta>0$, we denote by $P_\delta$ the function $\frac{\mathbbm{1}_{B(1,\delta)}}{|B(1,\delta|}$, and if $\mu$ is any Borel measure on the Lie group $G$, we write $\mu_\delta=\mu*P_\delta$.

The proof of Lemma~\ref{flattening} goes by approximating the measure $\mu_\delta$ by dyadic level sets. We say that a collection of sets $\{X_i\}_{i\in I}$ is \emph{essentially disjoint} if for some constant $C$ depending only on the ambient group $G$, any intersection of more than $C$ distinct sets $X_i$ is empty. We will use the following lemma.

\begin{lemma}\label{dyadic}
Let $G$ be a real Lie group and $U$ be a compact neighborhood of the identity in $G$. Suppose $\mu$ is a Borel probability measure on $G$ and $\delta>0$ is some small scale.\\
Then, there exist subsets $A_i$, $0\leq i \ll \log\frac{1}{\delta}$ such that
\begin{enumerate}
\item $\mu_\delta \ll \sum_i 2^i \mathbbm{1}_{A_i} \ll \mu_{4\delta}$
\item Each $A_i$ is an essentially disjoint union of balls of radius $\delta$.
\end{enumerate}
\end{lemma}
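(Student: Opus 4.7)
The plan is to decompose $\mu_\delta$ according to the dyadic level sets of its density. First I would fix a maximal $\delta$-separated net $\{x_j\}$ in a small enlargement of $\mathrm{supp}(\mu_\delta)\subset U^{(\delta)}$ and set $B_j:=B(x_j,\delta)$. By maximality these balls cover the support of $\mu_\delta$, and the compactness of $U$ together with the doubling property of the left-invariant Riemannian metric near the identity give an absolute constant $C_0=C_0(G,U)$ such that each point of $G$ lies in at most $C_0$ of the $B_j$; this is the essential disjointness required in Condition~2.

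Next, to each ball I would associate the value $c_j:=\sup_{y\in B_j}\mu_\delta(y)$. Since $\mu_\delta=\mu*P_\delta$ and $\mu$ is a probability measure, $c_j\leq \|P_\delta\|_\infty\lesssim \delta^{-d}$, so the dyadic exponents $\lfloor\log_2 c_j\rfloor$ lie in a window of length $O(\log 1/\delta)$. Define $I_i:=\{j\,:\,2^{i-1}<c_j\leq 2^i\}$ for $i\geq 1$, put any remaining index $j$ (those with $c_j\leq 1$) into $I_0$, and set $A_i:=\bigcup_{j\in I_i}B_j$. Each $A_i$ is by construction an essentially disjoint union of $\delta$-balls, and only indices $0\leq i\ll\log(1/\delta)$ occur.

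The upper bound $\mu_\delta\ll\sum_i 2^i\mathbbm{1}_{A_i}$ is then immediate: any $y$ in the support of $\mu_\delta$ belongs to some $B_j$ with $j\in I_i$, and there $\mu_\delta(y)\leq c_j\leq 2^i=2^i\mathbbm{1}_{A_i}(y)$. For the lower bound, fix $y$ and consider any one of the at most $C_0$ balls $B_j$ containing it, with $j\in I_i$. I would pick $z\in B_j$ realizing $\mu_\delta(z)\geq c_j/2\geq 2^{i-2}$; since $d(y,z)\leq 2\delta$ one has $B(z,\delta)\subset B(y,3\delta)\subset B(y,4\delta)$, and passing to $\mu$-measure gives
$$\mu_{4\delta}(y)\;\geq\;\frac{|B(1,\delta)|}{|B(1,4\delta)|}\,\mu_\delta(z)\;\gtrsim\;2^{i}.$$
Summing over the at most $C_0$ balls containing $y$ yields $\sum_i 2^i\mathbbm{1}_{A_i}(y)\lesssim \mu_{4\delta}(y)$, with a constant depending only on $G$ and $U$.

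The main obstacle, if any, is purely notational: checking the volume comparison $|B(1,\rho)|\asymp \rho^d$ uniformly in $\rho\leq 4\delta$ on $U$, and the bounded-multiplicity property of maximal $\delta$-nets, both of which follow from standard doubling estimates for the left-invariant Riemannian metric restricted to a compact neighborhood. Beyond that the argument is a textbook dyadic pigeonholing, and the enlargement from $\delta$ to $4\delta$ serves only to absorb the constant factor $|B(1,4\delta)|/|B(1,\delta)|=O(1)$ incurred when transferring a lower bound on $\mu_\delta(z)$ to a nearby point $y$.
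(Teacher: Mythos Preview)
Your argument is the standard dyadic pigeonholing, and since the paper gives no proof of its own here (it simply refers to \cite{lindenstrausssaxcesu2}), there is nothing substantive to compare against. The net construction, the bounded-overlap count, the upper bound $\mu_\delta\ll\sum_i 2^i\mathbbm{1}_{A_i}$, and the transfer $\mu_\delta(z)\lesssim\mu_{4\delta}(y)$ when $d(y,z)\le 2\delta$ are all correct.

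There is, however, one slip at the bottom level. You place every index $j$ with $c_j\le 1$ into $I_0$ and then, for the lower bound, write $\mu_\delta(z)\ge c_j/2\ge 2^{i-2}$. For $i\ge 1$ this follows from $c_j>2^{i-1}$, but for $i=0$ nothing forces $c_j\ge\tfrac12$: if some ball $B_j\ni y$ has $c_j$ of order $\delta$, your chain only yields $\mu_{4\delta}(y)\gtrsim\delta$, while $2^0\mathbbm{1}_{A_0}(y)=1$, and the pointwise inequality $\sum_i 2^i\mathbbm{1}_{A_i}\ll\mu_{4\delta}$ fails there. (In fact the lemma as literally stated, with $i\ge 0$, cannot hold pointwise for an arbitrary probability measure: take $\mu$ with a density vanishing linearly at a point, so that both $\mu_\delta$ and $\mu_{4\delta}$ are of order $\delta$ there.) The clean remedy is to keep the honest dyadic partition $I_i=\{j:2^{i-1}<c_j\le 2^i\}$ for \emph{all} integers $i$; then $2^{i(j)}\le 2c_j\lesssim\mu_{4\delta}(y)$ for every $j$ with $y\in B_j$, and the bounded overlap finishes the bound. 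Alternatively, note that in the only place this lemma is invoked (the proof of Lemma~\ref{flattening}) what is actually needed is $2^i|A_i|\ll 1$ and $2^i|A_i|^{1/2}\ll\|\mu_\delta\|_2$ for each $i$; for $i\ge 1$ these follow from your argument via $2^i\mathbbm{1}_{A_i}\lesssim\mu_{4\delta}$, and for $i=0$ they hold trivially since $A_0\subset U^{(\delta)}$ has bounded volume.
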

\begin{proof}
A proof in the case $G=SU(2)$ is given in \cite{lindenstrausssaxcesu2} and also applies in this more general setting, up to some minor changes.
\end{proof}

\begin{proof}[Proof of Lemma~\ref{flattening}]
Let $\mu$ be a $\sigma$-Frostman probability measure supported on $U$, and assume for a contradiction that for some small $\epsilon>0$ and some arbitrary small ball $B(x,\delta)$, we have
$$\nu*\nu(B(x,\delta)) \geq \delta^{\sigma+\epsilon}.$$
From
$$\nu*\nu(B(x,\delta))\ll \delta^{d}\nu_\delta*\nu_\delta(x)\ll \delta^d \|\nu_\delta\|_2^2,$$
we find
\begin{equation}\label{nularge}
\|\nu_\delta\|_2^2 \gg \delta^{-d+\sigma+\epsilon}.
\end{equation}
Using Lemma~\ref{dyadic}, we approximate $\mu_\delta$ by dyadic level sets:
$$\mu_\delta \ll \sum_i 2^i \mathbbm{1}_{A_i} \ll \mu_{4\delta},$$
each $A_i$ being an essentially disjoint union of balls of radius $\delta$.\\
By inequality (\ref{nularge}),
\begin{align*}
\delta^{\frac{-d+\sigma+\epsilon}{2}} \ll \|\nu_\delta\|_2
& \leq \left\|\sum_{i_1,\dots,i_p} w_*(2^{i_1}\mathbbm{1}_{A_{i_1}}\otimes\dots\otimes 2^{i_p}\mathbbm{1}_{A_{i_p}})\right\|_2\\
& \leq \sum_{i_1,\dots,i_p} \|w_*(2^{i_1}\mathbbm{1}_{A_{i_1}}\otimes\dots\otimes 2^{i_p}\mathbbm{1}_{A_{i_p}})\|_2,
\end{align*}
so there exist indices $i_1,\dots,i_p$ such that
\begin{equation}\label{indices}
\|w_*(2^{i_1}\mathbbm{1}_{A_{i_1}}\otimes\dots\otimes 2^{i_p}\mathbbm{1}_{A_{i_p}})\|_2 \geq \delta^{\frac{-d+\sigma}{2}+O(\epsilon)}.
\end{equation}
Given $\ell$ in $\{1,\dots,p\}$, Lemma~\ref{nbhd} ensures that we may apply Lemma~\ref{jacobian} to the map $w$ and to the measures with density $2^{i_\ell}\mathbbm{1}_{A_{i_\ell}}$ and $\bigotimes_{\ell'\neq\ell}2^{i_{\ell'}}\mathbbm{1}_{A_{i_\ell'}}$ and this yields
\begin{align*}
\delta^{\frac{-d+\sigma}{2}+O(\epsilon)} & \ll \|2^{i_\ell}\mathbbm{1}_{A_{i_\ell}}\|_2 \cdot \|\bigotimes_{\ell\neq\ell'} 2^{i_{\ell'}}\mathbbm{1}_{A_{i_{\ell'}}}\|_1\\
& = 2^{i_\ell}|A_{i_\ell}|^{\frac{1}{2}} \prod_{\ell'\neq\ell} 2^{i_{\ell'}}|A_{i_{\ell'}}|.
\end{align*}
Using also that the definition of the $A_i$'s implies that
$$2^i|A_i|\ll 1 \quad\mbox{and}\quad 2^i|A_i|^{\frac{1}{2}}\ll \|\mu_\delta\|_2,$$
the above forces
$$2^{i_\ell/2} \geq \delta^{\frac{-d+\sigma}{2}+O(\epsilon)} \quad\mbox{and}\quad
\forall \ell'\neq\ell,\ 2^{i_{\ell'}}|A_{i_{\ell'}}|\geq \delta^{O(\epsilon)}.$$
This must hold for each $\ell$, and therefore, for each $\ell$,
\begin{equation}\label{size}
2^{i_\ell}=\delta^{-d+\sigma+O(\epsilon)} \quad\mbox{and}\quad 2^{i_\ell}|A_{i_\ell}|=\delta^{O(\epsilon)}.
\end{equation}
As the set $A_{i_\ell}$ is a union of ball of radius $\delta$, this shows that
$$N(A_{i_\ell},\delta) \gg \delta^{-d}|A_{i_\ell}| \geq \delta^{-\sigma+O(\epsilon)}.$$
Moreover, as the measure $\mu$ is $\sigma$-Frostman, we have, for all $\rho\geq\delta$,
$$2^{i_\ell}|A_{i_\ell}\cap B(x,\rho)| \ll \mu(B(x,4\rho)) \ll \rho^{\sigma},$$
whence
$$N(A_{i_\ell}\cap B(x,\rho),\delta) \ll \delta^{-d}|A_{i_\ell}\cap B(x,\rho)| \leq \rho^\sigma\delta^{-O(\epsilon)}N(A_{i_\ell},\delta).$$
Thus, each $A_{i_\ell}$ is a $(\sigma,O(\epsilon))$-set at scale $\delta$, and therefore, so is 
$$A:=\bigcup_{\ell=1}^p A_{i_\ell}.$$
Now let $\varphi$ be the density function of the measure $w_*(2^{i_1}\mathbbm{1}_{A_{i_1}}\otimes\dots\otimes 2^{i_p}\mathbbm{1}_{A_{i_p}})$.\\
On one hand, by (\ref{indices}), we have
$$\|\varphi\|_2^2 = \delta^{-d+\sigma+O(\epsilon)}.$$
On the other hand, $\mu$ is $\sigma$-Frostman and $\nu$ can be written $\nu_1*\mu$ for some probability measure $\nu_1$, so that $\nu$ is also $\sigma$-Frostman, which implies
$$\|\varphi\|_\infty \leq \delta^{-d+\sigma}.$$
Let
$$E=\{x\in G \,|\, \varphi(x)\geq\frac{\|\varphi\|_2^2}{2}\}.$$
We have
$$\|\varphi\|_2^2  \leq \int_E\varphi^2 + \int_{G\backslash E}\varphi^2
 \leq \|\varphi\|_\infty\int_E\varphi + \frac{\|\varphi\|_2^2}{2}\int_G\varphi
 \leq \|\varphi\|_\infty\int_E\varphi + \frac{\|\varphi\|_2^2}{2}$$
whence
$$\int_E\varphi \geq \frac{\|\varphi\|_2^2}{2\|\varphi\|_\infty} \geq \delta^{O(\epsilon)}.$$
Letting $\Omega$ be the inverse image $w^{-1}(E)$, the above inequality certainly implies that
$$\mu^{\otimes k}(\Omega) \geq \delta^{O(\epsilon)},$$
which, by the fact that $\mu$ is $\sigma$-Frostman, shows that
$$N(\Omega,\delta) \geq \delta^{-k\sigma+O(\epsilon)} \geq \delta^{O(\epsilon)}N(A,\delta)^k.$$
To obtain a contradiction, we will bound the size of $w(\Omega)=E$ using that $\varphi$ takes large values on that set.
First observe that isolating the last letter of $w$ -- in (\ref{ww}), the letter $y_N$ -- allows us to write $\varphi$ as a convolution
$$\varphi = \varphi_1 * (2^{i_p}\mathbbm{1}_{A_p}).$$
Then, as $A_p$ is a union of balls of radius $\delta$, we have $\mathbbm{1}_{A_p} \ll \mathbbm{1}_{A_p}*P_{\frac{\delta}{2}}$ and therefore,
$$\varphi \ll \varphi*P_{\frac{\delta}{2}}.$$
In particular, for each $x$ in $E$,
$$\frac{\|\varphi\|_2^2}{2} \leq \varphi(x) \ll \delta^{-d}\int_{B(x,\frac{\delta}{2})}\varphi,$$
and summing this inequality for $x$ in a maximal $\delta$-separated set in $E$, we find
$$\frac{\|\varphi\|_2^2}{2}N(E,\delta) \ll \delta^{-d}\int \varphi
\leq \delta^{-d}.$$
Thus,
$$N(w(\Omega),\delta) \leq \delta^{-\sigma-O(\epsilon)} \leq \delta^{-O(\epsilon)}N(A,\delta),$$
which contradicts Corollary~\ref{finalcombinatorial}, provided we have chosen $\epsilon$ small enough.
\end{proof}

\bibliographystyle{plain}
\bibliography{bibliography}

\end{document}